\DeclareSymbolFontAlphabet{\mathbbm}{bbold}
\DeclareSymbolFontAlphabet{\mathbb}{AMSb}
\numberwithin{equation}{section}
\theoremstyle{plain}
\newtheorem{theorem}{Theorem}[section]
\newtheorem{corollary}[theorem]{Corollary}
\newtheorem{lemma}[theorem]{Lemma}
\newtheorem{proposition}[theorem]{Proposition}
\theoremstyle{definition}
\newtheorem{definition}[theorem]{Definition}
\newtheorem{remark}[theorem]{Remark}
\newtheorem*{assumption}{Assumption}
\newcommand{\R}{\mathds{R}}
\newcommand{\Z}{\mathds{Z}}
\newcommand{\N}{\mathds{N}}
\renewcommand{\P}[1]{\mathds{P}\left(#1\right)}
\newcommand{\ind}{\mathds{1}}
\newcommand{\E}[1]{\mathds{E}\left[#1\right]}
\newcommand{\itg}{\displaystyle\int}
\newcommand{\cond}{\ \middle| \ }
\newcommand{\Pois}{\mathscr{P}\!\mathscr{o}\!\mathscr{i}\!\mathscr{s}}
\newcommand{\model}{\mathfrak{M}}
\newcommand{\Model}{\mathfrak{S}}
\newcommand{\lm}{\mathfrak{L}}
\renewcommand{\L}{\bm{\mathscr{L}}}
\newcommand{\ones}{\bm{1}}
\newcommand{\F}{\mathcal{F}}
\newcommand{\G}{\mathcal{G}}
\newcommand{\D}{\mathcal{D}}
\newcommand{\M}{\bm{M}}
\newcommand{\Pg}{\bm{P}}
\newcommand{\Q}{\mathcal{Q}}
\renewcommand{\mp}[2]{\mathscr{D}^{#1}_{#2}}
\newcommand{\T}{\mathcal{T}}
\newcommand{\I}{\mathcal{I}}
\newcommand{\U}{\mathcal{U}}
\newcommand{\Mass}{\scaleobj{1.2}{\mathscr{M}}}
\newcommand{\Massi}{\scaleobj{1.2}{\mathscr{M}}^{\ic}}
\newcommand{\Massiexp}{\scaleobj{1.2}{\mathscr{M}}^{\ic,\exp}}
\newcommand{\f}{\scaleobj{1.2}{\mathscr{F}}}
\newcommand{\Lrv}{\mathrm{\mathbb{L}}}
\newcommand{\Lf}{\mathrm{L}}
\newcommand{\Ld}[1]{\mathrm{L}^{1,#1}_{\infty}}
\newcommand{\VLd}[1]{\Vert_{1,#1}^{\infty}}
\newcommand{\Hmat}{\mathrm{H}}
\newcommand{\Hmod}[1]{\Hmat_{\scaleobj{1}{#1}}}
\newcommand{\Equ}[2]{\bm{{\HAR}}_{#1}\bm{(} #2 \bm{)}}
\newcommand{\noexpl}{\text{\textbf{\upshape{NoExpl}}}}
\newcommand{\spec}{\text{\textbf{\upshape{Spec1}}}}
\newcommand{\specbis}{\text{\textbf{\upshape{Spec1.1}}}}
\newcommand{\subg}{\text{\textbf{\upshape{SubG}}}}
\newcommand{\fime}{\text{\textbf{\upshape{FiMe}}}}
\newcommand{\exptail}{\text{\textbf{\upshape{ExpTail}}}}
\newcommand{\specter}{\text{\textbf{\upshape{Spec2}}}}
\newcommand{\Cnoexpl}{\hyperlink{noexpl}{$\bm{(}\noexpl\bm{)}$}}
\newcommand{\Cspec}{\hyperlink{spec}{$\bm{(}\spec\bm{)}$}}
\newcommand{\Cspecbis}{\hyperlink{specbis}{$\bm{(}\specbis\bm{)}$}}
\newcommand{\Csubg}{\hyperlink{subg}{$\bm{(}\subg\bm{)}$}}
\newcommand{\Cfime}{\hyperlink{fime}{$\bm{(}\fime\bm{)}$}}
\newcommand{\Cexptail}{\hyperlink{exptail}{$\bm{(}\exptail\bm{)}$}}
\newcommand{\Cspecter}{\hyperlink{specter}{$\bm{(}\specter\bm{)}$}}
\newcommand{\coef}[3]{\genfrac{\langle}{\rangle}{0pt}{}{#1}{#2}_{\!\! #3}}
\DeclarePairedDelimiter\floor{\lfloor}{\rfloor}
\DeclareMathOperator{\lip}{Lip}
\DeclareMathOperator{\card}{Card}
\DeclareMathOperator{\id}{id}
\DeclareMathOperator{\spr}{SpR}
\DeclareMathOperator{\Id}{Id}
\DeclareMathOperator{\temp}{Temp}
\DeclareMathOperator{\Hd}{Y}
\DeclareMathOperator{\J}{J}
\DeclareMathOperator{\tJ}{\tilde{J}}
\DeclareMathOperator{\HAR}{HAR}
\DeclareMathOperator{\tp}{tp}
\DeclareMathOperator{\bd}{bd}
\DeclareMathOperator{\ic}{ic}
\begin{document}

\begin{frontmatter}

\title{Hawkes autoregressive processes: a new model for multiscale and heterogeneous processes}
\runtitle{Hawkes AutoRegressive Processes}

\begin{aug}
\author[A]{\inits{T.}\fnms{Théo}~\snm{Leblanc}\ead[label=e1]{leblanc@ceremade.dauphine.fr}\orcid{0009-0000-2194-9512}}
\address[A]{CEREMADE, Université Paris Dauphine - PSL, Paris, France\printead[presep={,\ }]{e1}}
\end{aug}

\begin{abstract}
Both Hawkes processes and autoregressive processes rely on linear functionals of their past, while modeling different types of data. Since datasets arising from observations of the same phenomenon may be heterogeneous and sampled at different time scales, it is natural to study multiscale and heterogeneous processes, such as those obtained by combining Hawkes and autoregressive dynamics. In this paper, we introduce this new Hawkes autoregressive (HAR) model incorporating both continuous- and discrete-time dynamics, and establish several probabilistic results, including the existence of a stationary version, a cluster representation, as well as stability and ergodic properties.
\end{abstract}

\begin{abstract}[language=french]
Les processus de Hawkes et les processus autorégressifs reposent tous deux sur des fonctionnelles linéaires de leur passé, tout en modélisant des données de nature différente. Les données issues de l'observation d'un même phénomène peuvent être hétérogènes et échantillonnées à différentes échelles de temps ; il est donc naturel d'étudier des processus multi-échelles et hétérogènes, tels que ceux obtenus en combinant des dynamiques de Hawkes et autorégressives. Dans cet article, nous introduisons un nouveau modèle Hawkes autorégressif (HAR), combinant des dynamiques en temps continu et en temps discret, et établissons plusieurs résultats probabilistes, notamment l'existence d'une version stationnaire, une représentation en clusters, ainsi que des propriétés de stabilité et d'ergodicité.
\end{abstract}

\begin{keyword}[class=MSC]
\kwd[Primary ]{60G55}
\kwd{62M10}
\kwd[; secondary ]{60G10}
\kwd{60F15}
\end{keyword}

\begin{keyword}
\kwd{Hawkes processes}
\kwd{Autoregressive processes}
\kwd{Cluster representation}
\kwd{Stability}
\kwd{Ergodic theorem}
\end{keyword}

\end{frontmatter}


\section{Introduction}

\subsection{Motivations}

Hawkes processes, introduced by Hawkes in 1971 \cite{hawkes_spectra_1971}, are a class of self-interacting point processes designed to model the occurrence of events in continuous time. Their defining feature is that the conditional intensity depends on the history of the process, allowing past events to influence future ones. In the multivariate setting, the process consists of several interacting components, each generating its own sequence of events. An event in one component may increase or decrease the likelihood of future events in other components, giving rise to excitation, inhibition, or both. This interaction structure makes Hawkes processes particularly well suited to capturing complex temporal dependencies in systems of mutually interacting entities. Thanks to this flexibility, Hawkes processes have been successfully applied in a wide range of domains, including seismology \cite{ogata_statistical_1988}, finance \cite{embrechts_multivariate_2011}, epidemiology \cite{meyer_space-time_2011}, and neuroscience \cite{lambert_reconstructing_2018,bonnet_inference_2023,galves_infinite_2013,raad_stability_2020}. In neuroscience, they provide a natural framework for modeling neuronal networks: each component represents a neuron, and events correspond to its spike times. The dependence on past activity captures key biological mechanisms such as synaptic integration and interaction between neurons. Over the past decades, Hawkes processes have attracted considerable attention, both for their theoretical properties and their practical applications (see, e.g., \cite{bremaud_stability_1996,bacry_limit_2013,delattre_hawkes_2015}).\\

Autoregressive (AR) processes, in contrast to Hawkes processes, evolve in discrete time and take continuous values. However, they share a key feature with Hawkes processes: their dependence on past observations. This analogy explains why Hawkes processes are sometimes described as autoregressive point processes. More precisely, the value of an AR process at time $t \in \Z$ is given by a function of its previous values $W_s$, for $s < t$, together with an additive noise term \cite{box_box_2013,shumway_time_2025}. Depending on how this dependence is specified, AR processes can exhibit self-excitation, self-inhibition, or a combination of both, similarly to Hawkes processes \cite{jones_nonlinear_1997}. AR processes also admit a multivariate extension, commonly referred to as vector autoregressive (VAR or MVAR) models, in which several time series interact and evolve jointly, each depending on its own past as well as that of the others \cite{lutkepohl_new_2005}. Because of their simplicity and flexibility, autoregressive models are widely used across many fields, particularly in economics and finance \cite{engle_autoregressive_1982,bollerslev_generalized_1986}. More recently, they have also found applications in neuroscience, where they are used to model EEG signals and brain rhythms \cite{lawhern_detecting_2013,zhang_classification_2017}.\\

Hawkes processes model the continuous arrival of events over time, whereas autoregressive (AR) processes describe discrete-time series with real-valued observations. These two classes of models are therefore complementary, as they share a similar dependence structure while operating in fundamentally different settings. In many applications, experiments produce heterogeneous data, combining event times and regularly sampled signals. This naturally motivates the development of models capable of handling such heterogeneous data, by combining processes of different nature, for instance a continuous-time point process with a discrete-time one. \\
Hawkes processes have already been combined with other types of stochastic processes. For instance, in \cite{dion_exponential_2020,amorino_nonparametric_2025}, the authors couple a diffusion process with a Hawkes process, yielding a heterogeneous model in which the jumps of the Hawkes process are embedded into a stochastic diffusion. In these models, however, the interaction is one-sided, as the diffusion does not influence the Hawkes process. Such Hawkes--diffusion models have found several applications in finance \cite{hawkes_hawkes_2022}.\\ 
Another important feature of real-world data is the presence of multiple time scales. Different phenomena may be observed simultaneously but recorded at different frequencies, leading to intrinsically multiscale datasets. This calls for models that can incorporate such multiscale structures. Multivariate autoregressive processes provide a natural way to encode multiscale features. In this article, we consider a multivariate AR process $W = (W^1,\cdots,W^P)$ in which each component $W^p = (W^p_t)_{t\in \tau_p \Z}$ evolves on its own temporal grid $\tau_p \Z$, with $\tau_p$ representing its sampling period. This formulation allows different components to operate at different time scales while remaining coupled through their dependence structure. For instance, multiscale processes naturally arise from wavelet representations, which decompose signals across temporal scales and motivate autoregressive constructions adapted to such structures \cite{basseville_multiscale_1992, daoudi_multiscale_1999}. In a more general perspective, multiscale behaviour can also be introduced in both Hawkes and autoregressive processes by allowing interactions to act over multiple memory horizons. This is typically achieved by parametrising the dependence structure with several time scales, thereby capturing both short and long term effects; see \cite{bacry_sparse_2020} for Hawkes processes and \cite{baranowski_multiscale_2027} for autoregressive processes.\\
The main objective of this paper is to introduce and analyse a new model combining a Hawkes process and an autoregressive process within a multiscale framework.\\
A multiscale and heterogeneous model was recently introduced in \cite{spaziani_heterogeneous_2025} to describe the interactions between neuronal activity and brain rhythms. This model combines discrete-time Hawkes processes with multiscale autoregressive processes, providing a detailed description of brain dynamics and functional connectivity. While this discrete-time framework simplifies the mathematical analysis, it does not capture the full complexity of continuous-time event dynamics.\\
In this article, we extend this line of work by introducing a model that couples a continuous-time Hawkes process with a multiscale autoregressive process through reciprocal interactions. We refer to this model as the Hawkes Autoregressive (HAR) process. Moving to continuous time raises significant theoretical challenges, such as the possible accumulation of events in finite time, which are absent in the discrete setting. It also leads to sharper and more informative conditions ensuring the existence of the process. The next section details how a Hawkes process and an autoregressive process can be naturally combined to define a HAR process.

\subsection{Heuristic for HAR processes}

We provide a heuristic to combine Hawkes and Autoregressive processes in order to obtain HAR processes. For this heuristic, we focus on the one-dimensional case.

\textbf{Hawkes processes:} Given a filtration $(\F_t)_t$, a point process $N$ admits $\lambda_t$ as stochastic predictable intensity if
\[ \lambda_t dt = \E{dN_t \cond \mathcal{F}_{t-}}.\]
The intensity at time $t$ can be interpreted as the infinitesimal probability of having a point between $t$ and $t+dt$ given the past up to time $t$. We say that $N$ is a Hawkes process (for the filtration $\F_t$) if the intensity depends on the past as follows, $\lambda_t = \phi\Big(\itg^{t-}_{-\infty} h(t-s)dN_s\Big)$, where $\phi$ and $h$ are two functions. Unlike $h$, we assume that $\phi$ is non-negative. The function $h$ models self-interaction of the point process $N$ on itself. If $h\geq 0$ (resp. $h\leq 0$) then $N$ is self exciting (resp. inhibiting). Function $\phi$ allows for the introduction of nonlinear dependency, \cite{bremaud_stability_1996,hawkes_spectra_1971}. Therefore, for Hawkes processes we have,
\begin{equation}\label{eq1.1}
\E{dN_t \cond \mathcal{F}_{t-}} = \phi\left(\itg^{t-}_{-\infty} h(t-s)dN_s\right) dt.
\end{equation}
\textbf{Autoregressive processes:} For AR processes, the current value can be expressed as a function of past observations, up to some noise. AR processes can be defined by the following equation
\begin{equation}\label{eq1.2} W_k =  \sum_{r\geq 1} a_r W_{k-r} + \varepsilon_k,
\end{equation}
where the $a_r$'s are fixed coefficients for positive integers $r$ and $\varepsilon_k$ is a random noise independent of the $W_{k'}$ with $k'<k$ \cite{shumway_time_2025,box_box_2013}. The function $a : r\in\N^* \mapsto a_r =:a(r)$, where $\N^*$ stands for the positive integers, models past dependency. One can generalise AR processes by allowing more complex past dependency leading to $W_k = \Psi(W_{k-1},W_{k-2},\cdots) + \varepsilon_k$, $k\in\Z$, 
with $\Psi$ a function. To obtain an expression analogous to \eqref{eq1.1} we stick to the following level of generality,
\begin{equation}\label{eq1.3} 
W_k = \psi\left( \sum_{k'<k} \J(k-k',W_{k'})\right) + \varepsilon_k,
\end{equation}
where $\psi, \J$ are given functions. Taking $\psi = \id$ and $\J(s,w) = a(s)w$ leads to the classical AR model described in \eqref{eq1.2}. Function $\psi$ allows the introduction of some nonlinearity and is the analogue of $\phi$ in \eqref{eq1.1}.

By denoting $\mp{}{}$ the counting measure on the set of the integers, \eqref{eq1.3} becomes $W_k =  \psi\Big( \itg_{-\infty}^{k-} \J(k-k',W_{k'}) d\mp{}{k'}\Big) + \varepsilon_k$. In particular, if $(\F_k)_k$ is the natural history of $\varepsilon$, we obtain the following equality:
\begin{equation}\label{eq1.5} \E{W_k \cond \mathcal{F}_{k-1}} = \psi\left(  \itg_{-\infty}^{k-} \J(k-k',W_{k'}) d\mp{}{k'}\right).\end{equation}

General Hawkes and AR processes have very similar structures, see \eqref{eq1.1} and \eqref{eq1.5}. This structural similarity naturally suggests combining them. For this purpose we add cross interaction terms in the intensity equation and in the AR equation, leading to the following system of equations.
\begin{equation}\label{eq1.6}
    \left\{
    \begin{aligned}
        \lambda_t & = \phi\left( \itg^{t-}_{-\infty} h_1(t-s)dN_s + \itg_{-\infty}^{t-} \J_2(t-k,W_{k}) d\mp{}{k}\right)\\
        W_k & = \psi\left( \itg^{k-}_{-\infty} h_{3}(k-s)dN_s + \itg_{-\infty}^{k-} \J_4(k-k',W_{k'}) d\mp{}{k'} \right) + \varepsilon_k,
    \end{aligned}
    \right.
\end{equation}
where $h_1,\J_2,h_3,\J_4$ are functions. Equations \eqref{eq1.6} provide the heuristic definition of HAR processes.

\subsection{Contributions and related works}

In this article, we establish several fundamental probabilistic properties of Hawkes Autoregressive (HAR) processes. Our contributions focus on three main aspects: well-posedness of the model (in particular non-explosion and existence of a stationary regime), structural representation through clusters, and long-time behaviour including convergence to equilibrium and ergodic properties. These results provide a theoretical foundation for further analysis of HAR processes.

\textbf{Well-posedness and stationary regime.}
A first issue is to ensure that the process is well-defined, in particular that no explosion occurs and that a stationary version exists. For Hawkes processes, Brémaud and Massoulié \cite{bremaud_stability_1996} proved existence and uniqueness of a stationary process under a spectral radius condition on the matrix of $\Lf^1$ norms of the interaction functions. For AR$(\infty)$ processes, stationarity and causality typically follow from summability conditions on the coefficients, which extend to spectral radius conditions in the multivariate case \cite{lutkepohl_new_2005}. We extend these classical results to the HAR framework by proving first non-explosion in Theorem \ref{th3.4}, and then existence of a stationary process under a unified spectral radius condition involving both the Hawkes and autoregressive interactions in Theorem \ref{th3.9}. The proof relies on a Picard iteration argument, adapted to handle the coupled continuous-discrete structure of the model.

\textbf{Cluster representation and exponential moments.}
A key tool in the study of Hawkes processes is cluster representation introduced by Hawkes and Oakes \cite{hawkes_cluster_1974}, which allows one to decompose the process into independent clusters and derive fine probabilistic properties such as mixing and moment bounds \cite{boly_mixing_2023,leblanc_exponential_2024}. We extend this representation to linear HAR processes in Theorem \ref{th4.5}. In contrast with classical Hawkes processes, where clusters are governed by kernels of the form $h(t-s)$ depending only on the time lag, HAR clusters are described by bivariate functions $h(s,t)$ that depend separately on past and present times, reflecting the coupling between continuous and discrete dynamics. We also mention the work of \cite{roueff_locally_2016}, which introduces \textit{locally stationary Hawkes processes}, defined as a family of point processes indexed by a scaling parameter $T>0$, with cluster functions of the form $h_T(s,t) = g(t-s,t/T)$ and baseline intensities depending on $t/T$. In this setting, the authors study in particular the dependence of moments and Laplace transforms on the scaling parameter $T$. Related limit theorems are obtained in \cite{deschatre_limit_2025} for models of the form $h_T(s,t) = g(t/T)\phi(t-s)$. The HAR framework differs significantly from these settings. In our case, the cluster representation involves $1$-periodic bivariate functions satisfying $h(s,t) = h(s+1,t+1)$, which reflects an intrinsic multiscale structure rather than a slow temporal deformation driven by a scaling parameter. This representation provides a powerful tool to analyse the process and, in particular, allows us to derive exponential moment bounds in Theorem \ref{th corps moment exp}. These results build on and adapt recent developments on exponential moments and generalized cluster structures \cite{leblanc_exponential_2024,leblanc_sharp_2025}.

\textbf{Long-time behaviour, stability and ergodic properties.}
Finally, we investigate the asymptotic behaviour of HAR processes. In the spirit of stability results for Hawkes processes \cite{bremaud_stability_1996}, we show that the influence of the initial condition vanishes over time, leading to convergence toward the stationary regime. More precisely, in Theorem \ref{th6.3} we prove that two HAR processes constructed with different initial conditions converge exponentially fast toward each other as time goes to infinity, for exponentially decaying interaction functions, extending results of \cite{clinet_statistical_2017} where exponential $\Lrv^1$ convergence rates are proved in the case of exponential Hawkes processes. In addition, in Theorem \ref{th ergo} we establish ergodic-type results with non-asymptotic convergence rates, ensuring that time averages converge to their stationary counterparts. These results extend known ergodic properties of Hawkes processes to the heterogeneous, multiscale and nonlinear HAR setting. In particular, they include ergodicity in the exponential framework \cite{abergel_long-time_2015,clinet_statistical_2017}, ergodicity of the intensity process for general subcritical univariate linear Hawkes processes \cite{kwan_ergodic_2025}, and non-asymptotic results for compactly supported interaction functions \cite{hansen_lasso_2015}. We also emphasize that these results are derived within linear frameworks, whereas Theorem \ref{th ergo} establishes analogous properties for nonlinear HAR processes. Alternative approaches based on renewal, regenerative, or mixing properties \cite{costa_renewal_2020, graham_regenerative_2021, boly_mixing_2023} also provide concentration results, but are not considered here.

\subsection{Outline}

In Section \ref{sec2} we rigorously define HAR processes and introduce the notations which will be used throughout this article. Section \ref{sec3} presents existence and stationary results. Linear HAR processes are introduced in Section \ref{sec4} where we also provide the cluster representation and application to exponential moments. In Section \ref{sec5}, we present results on stability, convergence to the stationary process and ergodic properties. The proofs of results of Sections \ref{sec3}, \ref{sec4} and \ref{sec5} are given, respectively, in Sections \ref{sec6}, \ref{sec7} and \ref{sec8}. Finally, some technical results are presented in Appendices \ref{appendixA}, \ref{appendixB}, \ref{appendixC} and \ref{appendixD}.

\textbf{Notations.} We denote by $\R$ the set of real numbers and $\Z$ the set of the integers. By $\R_+$ we mean the non-negative real numbers, i.e. $[0,\infty)$, and the non-negative integers, $\{0,1,2\cdots\}$, are denoted by $\N$. We denote $\N^*$ (resp. $\R^*$, $\R^*_+$, etc) the set $\N \setminus \{0\}$ (resp. $\R \setminus \{0\}$, $\R_+ \setminus \{0\}$, etc).

\section{Definition and Representation of HAR processes}\label{sec2}

Let $I\subset\R$. A simple point process $N$ on $I$ is a random set of distinct points in $I$ such that $\card(N\cap B)$ is a random variable for evry Borel subset $B$ of $I$. $N$ is said to be \textit{non-explosive} if almost surely $N$ is locally finite. The counting measure on $N$ is denoted by $dN$. For $C\subset I$ we define $N(C):=\card(N\cap C)$. A point process is fully determined either by its counting measure or by the collection $(N(C))_{C \subset I}$. See \cite{bremaud_point_1981} for more precise statements. In the sequel we will navigate freely between these different points of view. \\
Let $\M$ be a finite set. An $\M$-multivariate point process $(N^m)_{m\in\M}$ on $I$ is a simple point process $N$ on $I$ with marks $Z\in\M$, which means that for all $x\in N$, the mark $Z(x)$ is a random variable on $\M$, such that the following holds. For all $m\in\M$, $N^m = \{x \in N \mid Z(x)=m\}$ is a point process on $I$ \cite{bremaud_point_1981}. Note that, by definition, $N^m$ and $N^{m'}$ have no points in common for $m\neq m'$.

Consider two distinct sets of labels, $\M$ of size $M\in\N^*$ and $\Pg$ of size $P\in\N^*$. Each $p \in\Pg$ is associated with a domain $\D_p$ defined by a frequency $n_p\in\N^*$ and a phase $0\leq \phi_p < 1/n_p$ by the affine transformation of the integers $\D_p = \frac{1}{n_p}\Z + \phi_p$. We also define $\Q = \{(p,k) \mid p\in\Pg, k\in\D_p\}$ the set of all possible pairs $(p,k)$. 

A process $X = \big( (N^m)_{m\in\M},(W^p)_{p\in\Pg} \big)$ where $(N^m)_{m\in\M}$ is a multivariate point process on the real line, and for $p\in\Pg$, $W^p$ is a time series on domain $\D_p$, is called an $(\M,\Pg)$-process.

HAR processes consist of $(\M,\Pg)$-processes $X$ for which some evolution equations are satisfied. These equations involve parameters described below.

Parameters associated to a HAR process are denoted $\model$ and are given by 
\begin{equation}\label{eq def model}
	\model = \big((h^{\alpha}_{m})_{\alpha,m}, (\J^{\alpha}_p,h^{\alpha}_{p},b^{\alpha}_p)_{\alpha,p}, \ (\Phi^{m})_{m}, \ (\Phi^p)_p \big),
\end{equation}
where we have for $\alpha\in\M\cup\Pg$, $m\in\M$ and $p\in\Pg$
\begin{itemize}
    \item A function $h^{\alpha}_{m} : \R_+^* \longrightarrow \R$  called \textit{interaction function of $m$ on $\alpha$}.
    \item A function $\J^{\alpha}_{p} : \R_+^*\times \R \longrightarrow \R$  called 	\textit{interaction function of $p$ on $\alpha$}, and non-negative functions $h^{\alpha}_p,b^{\alpha}_p : \R_+^* \longrightarrow \R_+$, such that for all $s > 0$ and all $w, w' \in \R$,
    \begin{equation}
    	\left\{
    \begin{aligned}
    	& \vert \J^{\alpha}_p(s,w) \vert \leq h^{\alpha}_p(s) \vert w\vert + b^{\alpha}_p(s),\\
    	& \vert \J^{\alpha}_p(s,w)-\J^{\alpha}_p(s,w') \vert \leq h^{\alpha}_p(s) \vert w-w'\vert.
    \end{aligned}
    \right.
    \end{equation}
    \item A non-negative Lipschitz function $\Phi^{m} : \R \longrightarrow \R_+$ called \textit{link function of $m$}.
    \item A Lipschitz function $\Phi^{p} : \R \longrightarrow \R$ called \textit{link function of $p$} such that $\Phi^p(0) = 0$.
\end{itemize}

We define $\Model$ as the set of all such parameters $\model$ defined in \eqref{eq def model} and we denote $\model\in\Model$.

To construct the different processes on a common probability space, we follow the formalism of \cite{delattre_hawkes_2015} to represent Hawkes processes as the solution of a system of SDEs driven by Poisson random measures.

Let $(\Omega,\F,\mathds{P})$ be a probability space with independent Poisson random measures $\pi^m$ on $\R\times\R_+$ for $m\in\M$. Let $\xi = (\xi^p_k)_{(p,k)\in\Q}$ be a collection of independent random variables, referred to as \textit{random drifts}, which are also independent of the Poisson random measures. We also require that $\xi^p_k \sim \xi^p_{k+n}$ in distribution for any $(p,k)\in\Q$ and any $n\in\Z$. Almost surely each atom of all the Poisson random measures have distinct time coordinates, thus for simplicity we assume that it is \textit{always} the case.

We define a filtration $(\F_t)_{t\in\R}$ by 
\begin{equation}\label{filtration}
	\mathcal{F}_t = \sigma\bigg(\pi^m\cap\big((-\infty,t]\times \R_+\big), \ m\in\M \ \text{and} \ \xi^p_{k}, \ (p,k)\in\Q, \ k\leq t\bigg).
\end{equation}

\begin{definition}[HAR processes]
	Let $\model\in\Model$ and random drifts $\xi$. We say that a process $X=\big((N^m)_{m\in\M},(W^p)_{p\in\Pg}\big)$, adapted to $(\mathcal{F}_t)_t$ is a $\Equ{\model,\xi}{-\infty}$ process, if we have 
    \begin{subequations}\label{eq2.2}
    \begin{empheq}[left=\empheqlbrace]{align}
        & \lambda^m_t = \Phi^m\Bigg(\sum_{m'\in\M} \itg_{-\infty}^{t-} h^{m}_{m'}(t-s)dN^{m'}_s + \sum_{p\in\Pg} \itg_{-\infty}^{t-} \J^m_p(t-k,W^p_k) d\mp{p}{k} \Bigg) \label{eq2.2a}\\
        & N^m(C) = \itg_{C\times \R_+} \ind_{x\leq \lambda^m_s} d\pi^m(s,x) \label{eq2.2b}\\
        & W^p_k = \xi^p_k + \Phi^p\Bigg(\sum_{m\in\M} \itg_{-\infty}^{k-} h^p_m(k-s)dN^{m}_s + \sum_{p'\in\Pg} \itg_{-\infty}^{k-} \J^p_{p'}(k-k',W^{p'}_{k'})d\mp{p'}{k'}\Bigg), \label{eq2.2c}
    \end{empheq}
    \end{subequations}
    where \eqref{eq2.2a} holds $\mathds{P}(d\omega)\otimes dt$ almost everywhere in $\Omega\times \R$, \eqref{eq2.2b} holds almost surely for all $C\subset\R$ and \eqref{eq2.2c} holds almost surely for all $(p,k)\in\Q$. Such a process is called a HAR process with parameters $\model$, random drifts $\xi$, and started at $-\infty$.
\end{definition}

Before providing comments on this definition, we also introduce HAR processes with initial condition. Indeed, one may wish to initialise the process on $(-\infty,t_0)$ for $t_0\in\R$ and then turn on the HAR equations at $t=t_0$.\\
An initial condition at time $t_0$ is a random variable $\mathfrak{C}$ which is $\F_{t_0 -}$ measurable and is defined as follows
\begin{equation*}
    \mathfrak{C} = \Big( \big(\mathfrak{C}^m\big)_{m\in\M}, \ \big(\mathfrak{C}^p_k\big)_{(p,k)\in\Q, \ k<t_0} \Big),
\end{equation*} 
where $\mathfrak{C}^m$ is a subset of $(-\infty,t_0)$ that initialises the points of $N^m$ before $t_0$ and $\mathfrak{C}^p_k\in\R\cup\{\varnothing\}$ initialises $W^p$ at time $k$. If $\mathfrak{C}^p_k$ takes the value $\varnothing$ we define $\J^{\alpha}_p(s,\varnothing)=0$ for all $s\in\R$, $\alpha\in\M\cup\Pg$ and $p\in\Pg$. 

\begin{definition}[HAR processes with initial condition] 
    Let $\model\in\Model$, random drifts $\xi$, $t_0\in\R$ and an initial condition $\mathfrak{C}$ at time $t_0$. We say that a process $X=\big((N^m)_{m\in\M},(W^p)_{p\in\Pg}\big)$, adapted to $(\mathcal{F}_t)_t$, is a $\Equ{\model,\xi}{t_0,\mathfrak{C}}$ process, if 
    \begin{enumerate}
        \item we have $N^m\cap(-\infty,t_0) = \mathfrak{C}^m$ for all $m\in\M$ and $W^p_k = \mathfrak{C}^p_k$ for all $(p,k)\in\Q$ with $k<t_0$,
        \item \eqref{eq2.2a} holds $d\mathds{P}(\omega)\otimes dt$ almost everywhere in $\Omega\times [t_0,\infty)$, and almost surely \eqref{eq2.2b} and \eqref{eq2.2c} hold for all $C\subset[t_0,\infty)$ and all $(p,k)\in\Q$ with $k\geq t_0$.
    \end{enumerate} 
    Such a process is called a HAR process with parameters $\model$, random drifts $\xi$, and started at $t_0$ with initial condition $\mathfrak{C}$.
\end{definition}

Some remarks are in order.

Realisation of point processes as SDEs driven by Poisson random measures is done in \eqref{eq2.2b}. The intensity process, $\lambda^m_t$, is defined by \eqref{eq2.2a} and then $N^m$ is defined as the set of atoms of the Poisson random measures lying below the intensity, which is the analogue of the classical thinning for Hawkes processes \cite{delattre_hawkes_2015,phi_kalikow_2023}. As shown in Appendix \ref{appendixD}, $\lambda$ defined as such is indeed the stochastic predictable intensity of $N$. Also, if $\tilde{\lambda}$ is another predictable process such that $\tilde{\lambda} = \lambda$ almost everywhere for $d\P{\omega}\otimes dt$, then $N$ and the corresponding $\tilde{N}$ satisfy $\E{\card(N\vartriangle \tilde{N})} = \E{\int_{\R} \vert \lambda_t-\tilde{\lambda}_t\vert dt}=0$ and thus $N=\tilde{N}$ almost surely, which explains why $\lambda$ is only defined $d\P{\omega}\otimes dt$ almost everywhere. 

As for AR processes, time series $W$ of HAR processes are constructed with random noise terms: $(\xi^p_k)_{(p,k)\in\Q}$. Since we do not require them to be centered, we adopt the terminology \textit{random drift} which is less misleading than noise. 

Without loss of generality, we can suppose throughout this article that the greatest common divisor of the frequencies $n_p, \ p\in\Pg$ is 1, so that the set $\Q$ is exactly $1$-periodic, ie
\begin{equation}\label{eq2.1}
    \gcd(n_p, \ p\in\Pg) = 1.
\end{equation}

We allow $\mathfrak{C}^p_k$ to take the value $\varnothing$ for the following reason. One may want to have a HAR process started at time $t_0$ without initial condition, thus the logical choice is $\mathfrak{C}^m=\varnothing$ and $\mathfrak{C}^p_k=0$. However, since possibly $\J^{\alpha}_{p}(s,0)\neq 0$, this choice is not the desired one. Allowing the value $\varnothing$ and forcing $\J^{\alpha}_{p}(s,\varnothing)=0$ creates the desired initial condition.

For simplicity, we only write $\Equ{\model,\xi}{t_0}$ when the initial condition $\mathfrak{C}$ is chosen to be the empty one ($\mathfrak{C}^m=\varnothing$ for all $m$ and $\mathfrak{C}^p_k=\varnothing$ for all $(p,k)\in\Q$ with $k<t_0$) and we denote this initial condition by $\mathfrak{C}^{\varnothing}$. In practice, it is equivalent to replace $-\infty$ by $t_0$ in the lower bounds of the integrals in \eqref{eq2.2a}, \eqref{eq2.2b} and \eqref{eq2.2c}.

The point process $N^m$ as predictable intensity $\lambda^m$ which satisfies \eqref{eq2.2a}. Due to the function $\Phi^m$, $\lambda^m$ is nonlinear with respect to the parameters of the model. Thus, the class of HAR processes considered here is very general. The linear quantity 
\begin{equation}\label{eq2.3}
    \nu^m_t := \sum_{m'\in\M} \itg_{-\infty}^{t-} h^{m}_{m'}(t-s)dN^{m'}_s + \sum_{p\in\Pg} \itg_{-\infty}^{t-} \J^m_p(t-k,W^p_k) d\mp{p}{k},
\end{equation}
is such that $\lambda^m_t = \Phi^m(\nu^m_t)$. It is made of two quantities: interactions from $N^{m'}$ on $N^m$ (as in Hawkes processes) and interactions from $W^p$ on $N^m$. The quantity $\nu^m$ can take negative values, but since $\Phi^m\geq 0$ the intensity $\lambda^m$ is well defined. If parameters $h^m_{m'}, \ \J^m_{p}$ for $m'\in\M$ and $p\in\Pg$ are non-negative, then $\nu^m_t\geq 0$. Thus, in this case one can take $\Phi^m = \mu_m + \id$ with $\mu_m\in\R_+$ and has $\lambda^m_t = \mu_m + \nu^m_t$. If $\Pg=\varnothing$, one obtains the well known linear Hawkes process.

The time series $W^p$ satisfies \eqref{eq2.2c}. As in classical AR processes, this equation is made of two elements: a possibly non centered noise $\xi^p_k$ called random drift, and a term that depends on the past. The link function $\Phi^p$ is required to satisfy $\Phi^p(0)=0$. This is not restrictive since a situation where $\Phi^p(0)\neq 0$ fits our case via the following modifications: $(\Phi^p,\xi^p) \leftarrow (\Phi^p-\Phi^p(0),\xi^p+\Phi^p(0))$. As for equation of the intensity, link function $\Phi^p$ allows to consider general nonlinear case. Inside the link function, we have interactions from $(N^m)_{m\in\M}$ on $W^p$ and then classical AR term of interactions from $(W^{p'})_{p'\in\Pg}$ on $W^p$.

\textbf{Notations.} Lipschitz coefficients of link functions are denoted as follows
\begin{equation}
    L_{\alpha} := \lip(\Phi^{\alpha}) \quad p\in\Pg,\ \alpha\in\M\cup\Pg.
\end{equation}

For a function $f : \R\longrightarrow \R$ and $p\geq 1$ the classical $\Lf^p$ norm is denoted $\Vert f\Vert_p$. Given a positive real number $\alpha>0$ we also define the following norm which takes into account discrete behaviour at frequency scale $\alpha$ of $f$,
\[ \Vert f\VLd{\alpha} := \sup_{x\in\R} \sum_{k\in\Z} \vert f(x+k/\alpha) \vert. \]
If $\Vert f\VLd{\alpha} < \infty$ we denote $f\in \Ld{\alpha}$. In particular, if $f$ is supported on $\R_+^*$, then since $\D_p =\frac{1}{n_p} \Z+\phi_p$ we have \[\Vert f\VLd{n_p} =  \sup_{t\in\R} \itg_{-\infty}^{t-} \vert f(t-k)\vert d\mp{p}{k}.\]
For $p\geq 1$, we denote by $\Lrv^p = \Lrv^p(\Omega)$ the set of random variables with finite moment of order $p$.

The identity matrix is denoted by $\Id_n$ with $n$ the dimension or $\Id$ if there is no ambiguity. For a (complex valued) square matrix $N$ the spectral radius of $N$, defined as the largest magnitude of eigenvalues of $N$, is denoted as $\spr(N)$.

For $q\geq 1$, the $\ell^q$ norm of a vector $x\in\R^n$ is denoted by $\vert x\vert_{q} := \big(\sum_{i=1}^n \vert x_i\vert^q\big)^{1/q}$.
Given two vectors $x$ and $y$ (or matrices) of same dimension, we use $\preceq$ for entrywise comparison, 
\[x \preceq y \ \iff \ \forall i, \ x_i \leq y_i.\]
Finally, for a vector $x$ and a real function $f$, we denote by $f(x)$ the coordinate-wise application of $f$. 

Let $X=(N,W)$ and $\tilde{X}=(\tilde{N},\tilde{W})$ two $(\M,\Pg)$-processes. Let also $F=(F^{\alpha})_{\alpha\in\M\cup\Pg}$ non-negative real functions, we define the $F$-mass of $X$ and the $F$-mass of the difference between $X$ and $\tilde{X}$ as follows ($\vartriangle$ is the symmetric difference),
\begin{equation}\label{mass F}
	\begin{aligned}
		& \Mass_F(X) = \sum_{m\in\M} \itg_{\R}F^m(t)dN^m_t + \sum_{p\in\Pg}\itg_{\R}F^p(k) \vert W^p_k\vert d\mp{p}{k},\\
		& \Mass_F(X\circleddash\tilde{X}) = \sum_{m\in\M}\itg_{\R}F^m(t) d(N^m\vartriangle\tilde{N}^m)_t + \sum_{p\in\Pg}\itg_{\R}F^p(k) \vert W^p_k-\tilde{W}^p_k\vert d\mp{p}{k}.
	\end{aligned}
\end{equation}
With a slight abuse of notations, if $B\subset\R$ and $F=(\ind_{B})_{\alpha\in\M\cup\Pg}$, denote $\Mass_B(X) := \Mass_F(X)$ and similarly $\Mass_B(X\circleddash\tilde{X}) := \Mass_F(X\circleddash\tilde{X})$.

We also introduce similar quantities for initial conditions. Let $\mathfrak{C}$ and $\tilde{\mathfrak{C}}$ two initial conditions at time $t_0 \in\R$. Let also $F=(F^{\alpha})_{\alpha\in\M\cup\Pg}$ non-negative real functions. Define the following quantities.
\begin{equation}\label{massi F}
	\begin{aligned}
		& \Massi_F(\mathfrak{C}) = \sum_{m\in\M} \itg_{\R}F^m(t)d\mathfrak{C}^m_t + \sum_{p\in\Pg}\itg_{\R}F^p(k) \psi_1(\mathfrak{C}^p_k) d\mp{p}{k},\\
		& \Massi_F(\mathfrak{C}\circleddash \tilde{\mathfrak{C}}) = \sum_{m\in\M}\itg_{\R}F^m(t) d(\mathfrak{C}^m\vartriangle \tilde{\mathfrak{C}}^m)_t + \sum_{p\in\Pg}\itg_{\R}F^p(k) \psi_2(\mathfrak{C}^p_k,\tilde{\mathfrak{C}}^p_k) d\mp{p}{k},
	\end{aligned}
\end{equation}
where $\psi_1$ and $\psi_2$ are defined by
\begin{equation}\label{def psi}
\psi_1(x)=
\begin{cases}
1+|x| & \text{if } x\in\R\\
0 & \text{if } x=\varnothing
\end{cases}
\qquad \text{and} \qquad
\psi_2(x,y)=
\begin{cases}
\vert x-y \vert & \text{if } x,y\in\R\\
\vert x\vert + 1& \text{if } x\in\R, \ y=\varnothing\\
1 + \vert y\vert & \text{if } x=\varnothing, \ y\in\R\\
0 & \text{if } x=y=\varnothing
\end{cases}.
\end{equation}
The idea is, respectively, to measure the size of an initial condition and how different the two initial conditions are. Since initial conditions are allowed to take both real values and the value $\varnothing$, we need the functions $\psi_1$ and $\psi_2$ to deal with these two types of possible values. As in \eqref{mass F}, same abuse of notations is used if $F^{\alpha} = \ind_{B}$ for all $\alpha \in\M\cup\Pg$ with $B\subset \R$. 

\section{Existence results for HAR processes}\label{sec3}

The equations defining HAR processes are implicit, as both the point processes and the time series depend on each other. Therefore, it is not a priori clear that HAR processes exist. In this section, we derive conditions ensuring the existence of HAR processes. In the case of Hawkes processes, conditions for existence and uniqueness are well understood \cite{bremaud_stability_1996,bacry_limit_2013}. The main difference here is that the Hawkes process is coupled with discrete-time sequences driven by random drifts. For HAR processes started at a time $t_0\in\R$ with suitable initial condition (see Definition \ref{def3.2}), since the present value of the process depends only on its past we can construct the process on increasing time intervals $[t_0,t_0+t]$ with $t\to\infty$, see Theorem \ref{th3.4}. However, for HAR processes started at $t=-\infty$, see Theorem \ref{th3.9}, this constructive approach is no longer applicable and we use instead a fixed point method similar to the one implemented in \cite{bremaud_stability_1996}.

\subsection{Existence of non-explosive HAR processes with initial condition}

In this subsection, we present results on the existence of HAR processes started at a finite time with suitable initial condition. We are only interested in processes that do not blow up to infinity in finite time: we need them to be non-explosive, which means that the process is bounded on compact sets. This is explained by the following definition. 

\begin{definition}[non-explosive process]
    A $(\M,\Pg)-$process $X$ is said to be non-explosive if for all bounded set $B\subset \R$ we have $\Mass_B(X) < \infty$.
\end{definition}

For a HAR process to be non-explosive, one cannot allow arbitrary initial conditions. The following definition introduces \textit{integrable} initial conditions.

\begin{definition}[Integrable initial condition]\label{def3.2}
    An initial condition $\mathfrak{C}$ at time $t_0\in\R$ is said to be \textit{integrable} if there exists a constant $K<\infty$ and a decomposition $\mathfrak{C}^m = \mathfrak{C}^m_{reg} \cup \mathfrak{C}^m_{disc}$ for $m\in\M$, $\mathfrak{C}^p = \mathfrak{C}^p_{reg} + \mathfrak{C}^p_{disc}$ for $p\in\Pg$, such that $\Massi_{(-\infty,t_0)}(\mathfrak{C}_{disc})<\infty$ almost surely and for any Borel set $B\subset \R$ we have $\E{\Massi_B(\mathfrak{C}^m_{reg})} \leq K \vert B\vert$ with $\vert B\vert$ the Lebesgue measure of $B$ and $\E{\Massi_B(\mathfrak{C}^p_{reg})} \leq K \vert B\vert_{\Pg}$ where $\vert B\vert_{\Pg} = \sum_{p\in\Pg} \mp{p}{}(B)$.
\end{definition}

To define $\mathfrak{C}^p_{reg} + \mathfrak{C}^p_{disc}$ we use the natural convention $x+\varnothing = \varnothing+x = x$ for any $x\in\R\cup\{\varnothing\}$.\\
Since we are interested in existence and uniqueness results, we need to introduce the right notion of uniqueness for HAR processes. 
\begin{definition}[Modification]\label{def3.3}
    Two processes $\big((N^m)_{m\in\M},(W^p)_{p\in\Pg}\big)$ and $\big((\tilde{N}^m)_{m\in\M},(\tilde{W}^p)_{p\in\Pg}\big)$ are said to be modifications of each other if
    \begin{itemize}
        \item Almost surely, for all $m\in\M$, $N^m  = \tilde{N}^m$.
        \item Almost surely, for all $(p,k)\in\Q$, $W^p_k = \tilde{W}^p_k$. 
    \end{itemize}
\end{definition}

Let us introduce the following assumption.

\begin{assumption}[$\noexpl$]\hypertarget{noexpl}{}
   Parameters $\model\in\Model$ satisfy assumption \Cnoexpl\ if for all $m\in\M$ the link function $\Phi^m$ is Lipschitz, i.e. $L_m <\infty$, and for all $m\in\M$, $p,p'\in\Pg$, $\alpha\in\M\cup\Pg$, we have $h^{\alpha}_{m}, h^{m}_p, b^{m}_p \in \Lf^{1}$ and $h^{p'}_p, b^{p'}_p \in \Ld{n_p}$.
\end{assumption}

We can now state the main result of this subsection.

\begin{theorem}[Existence of non-explosive HAR processes]\label{th3.4}
    Let $\model\in\Model$, random drifts $\xi$, and an initial condition $\mathfrak{C}$ at time $t_0$. Suppose that assumption \Cnoexpl$[\model]$ holds and that $\mathfrak{C}$ is integrable. Then there exists a unique (up to modification) $\Equ{\model,\xi}{t_0,\mathfrak{C}}$ process among non-explosive processes.
\end{theorem}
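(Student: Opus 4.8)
The plan is to construct the process on $[t_0,\infty)$ by an inductive procedure over a sequence of \emph{stopping levels} that exhausts $[t_0,\infty)$, exploiting the fact that equations \eqref{eq2.2a}--\eqref{eq2.2c} are causal: the value of $\lambda^m_t$, of a new atom of $S^m$, and of $W^p_k$ depends only on what has happened strictly before. First I would fix $T>0$ and show how to build the process on $[t_0,t_0+T]$. The chains live on the discrete grids $\D_p$, so on a bounded interval each chain has only finitely many sites; order all the relevant events (candidate Poisson atoms of the $\pi^m$ inside $[t_0,t_0+T]\times\R_+$ below a large level, and chain sites in $\bigcup_p \D_p\cap[t_0,t_0+T)$) and process them in increasing time. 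At each step the quantity to be computed (an intensity value, hence whether a candidate atom is kept; or a chain value) is an explicit function of the finitely many earlier events plus the integrals against the initial condition $\mathfrak C$, which converge because $h^m_{m'},h^m_p,h^p_m\in L^1$ under \Cnoexpl\ and $\mathfrak C$ is integrable (Definition \ref{def3.2}). This yields a pathwise, measurable construction; adaptedness to $(\F_t)_t$ and predictability of $\lambda^m$ are then immediate from the construction order. Consistency of the constructions for different $T$ gives a process on all of $[t_0,\infty)$.

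The non-explosion has to be proved, not assumed, since a priori the thinning could keep infinitely many atoms in a bounded window. I would control this by a first-moment (Grönwall-type) argument. Introduce $n^m(t):=\E{S^m([t_0,t])}$ and $w^p(t):=\E{\ind_{W^p_k\neq\varnothing?}\vert W^p_k\vert}$ summed over sites $k\in\D_p$ up to $t$, or more conveniently the ``intensity rate'' $\E{\lambda^m_t}$ and $\E{\vert W^p_k\vert}$. Using the Lipschitz bounds $L_\alpha<\infty$ for the link functions and $a^\alpha_p=\lip(\kappa^\alpha_p)<\infty$ for the masks, together with $\Phi^p(0)=0$ and $\kappa^\alpha_\beta(\varnothing)=0$, one bounds $\E{\lambda^m_t}$ by a constant (coming from $\mu_m$, the Lipschitz constants, and the $L^1$ norms of the interaction functions against $\mathfrak C$ and the already-built process) plus a convolution of the $L^1$ kernels $|h^\alpha_\beta|$ against the past expectations, and similarly for $\E{\vert W^p_k\vert}$, where $\E{\vert\xi^p_k\vert}$ enters; the latter is finite and stationary in $k$ by the standing assumption on the random drifts (and, if one wants, can be absorbed by noting only that it is locally summable). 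A Grönwall / renewal-type inequality on $[t_0,t_0+T]$ then gives that all these expectations are finite on any bounded interval, hence a.s.\ $S^m(C)<\infty$, $|\lambda^m_t|<\infty$, $|W^p_k|<\infty$: the process is non exploding. (Here one must be a little careful that $\|h\|_{1,n_p}^\infty<\infty$ follows from $h\in L^1$ together with the bounded-interval setting, or argue directly with $\itg_{-\infty}^{t-}|h(t-k)|\,d\mp p k$ truncated to the window.)

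For uniqueness up to modification (Definition \ref{def3.3}), suppose $X$ and $\tilde X$ are two non-exploding $\Equ{\model}{t_0,\mathfrak C}$ processes on the common probability space. Define $\tau:=\inf\{t\ge t_0 : X\text{ and }\tilde X\text{ disagree on }[t_0,t]\}$ in the appropriate sense (the point sets $S^m\cap[t_0,t]$ and $\tilde S^m\cap[t_0,t]$ coincide, and $W^p_k=\tilde W^p_k$ for all chain sites $k<t$). Because both processes are driven by the \emph{same} Poisson measures $\pi^m$ and the \emph{same} drifts $\xi^p_k$, and because the defining equations express the present deterministically from the strict past, one shows $\tau$ cannot be finite: just after $\tau$, both processes see identical pasts, so their pre-intensities $\ell^m$ agree $dt$-a.e., hence their intensities agree, hence \eqref{eq2.2b} selects the same atoms, and the chain equation \eqref{eq2.2c} produces the same values at the next grid points — contradicting the definition of $\tau$ unless $\tau=\infty$. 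Non-explosion is what makes this "next event" argument legitimate (only finitely many events accumulate before $\tau$). I expect the main obstacle to be precisely this interplay: organizing the induction so that "process the events in increasing time order" is well-defined when continuous-time Poisson atoms and discrete-time chain updates are interleaved, and making the first-moment bound genuinely closed (the cross-terms chains$\to$nodes$\to$chains must be handled simultaneously, so the Grönwall argument is vector-valued, with the $L^1$ norms of all the $h^\alpha_\beta$ and the Lipschitz constants assembled into a single kernel matrix). Everything else is routine once that bookkeeping is set up.
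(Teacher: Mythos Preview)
Your overall architecture—causal step-by-step construction, a moment bound to rule out explosion, uniqueness from pathwise determinism given common noise—is the right shape, but the construction step as you describe it is circular, and this is precisely where the paper spends its effort.

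You propose to ``order the candidate Poisson atoms of the $\pi^m$ inside $[t_0,t_0+T]\times\R_+$ below a large level'' and process them in time. But there is no a priori ceiling on $\lambda^m_t$: every accepted atom may push the intensity above any prefixed level, so atoms of $\pi^m$ arbitrarily high up can become relevant. Truncating at level $N$ produces a process with intensity $\lambda^m\wedge N$, not the target; and since the $h^\alpha_\beta$ are signed, the truncated processes are not monotone in $N$, so letting $N\to\infty$ is not innocent. Your Gr\"onwall bound has the same circularity: to write $\E{\lambda^m_t}\le C+\int |h|\ast \E{\lambda^{m'}}$ you need the process to exist and the expectations to be finite, which is exactly what is at stake.

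The paper's mechanism avoids both issues at once. Having built the process up to time $t_1$, it computes a \emph{temporary intensity} $\temp_{t_1}(\lambda^m)_t$—the value $\lambda^m_t$ would take if no further points occurred after $t_1$—and calls the atoms of $\pi^m$ lying under this curve on a short window $(t_1,t_1+t^\ast]$ the \emph{pre-points}. The first pre-point becomes a true point; accepting it raises the temporary intensity by at most $L_m|h^m_{m_0}(\cdot-s)|$, creating a Poisson number of \emph{new} pre-points with mean at most $\max_{m_0}\sum_m L_m\|h^m_{m_0}\|_{L^1[0,t^\ast]}$, independently of all previous rounds (the newly uncovered strips of $\pi^m$ have never been inspected). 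For $t^\ast$ small this mean is $<1$, so the cascade is dominated by a subcritical Galton--Watson forest rooted at the finitely many initial pre-points, hence a.s.\ finite. This simultaneously defines the construction without any level cap and proves non-explosion, with no separate moment estimate. The hypothesis $h^m_{m'}\in L^1$ enters exactly to make the reproduction mean vanish as $t^\ast\downarrow0$.

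Your uniqueness argument is essentially the paper's and is fine once non-explosion is secured. The aside about $\|h\|_{1,n_p}^\infty$ is a red herring: on a bounded window each $\D_p$ contributes finitely many sites, so the chain sums are finite sums and no discrete norm is needed here (that norm only appears in the stationary theory of Section~\ref{sec3}.2).
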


The assumptions of Theorem \ref{th3.4} are mild and quite general. The initial condition can be taken as Poisson point processes for $\mathfrak{C}^m$, or even some kind of sufficiently evenly spaced deterministic points. For $\mathfrak{C}^p$, one can take i.i.d. random variables with finite expectation. However, if one wants to relax the $\Lf^1/\Ld{n_p}$ assumptions of \Cnoexpl$[\model]$ into only $\Lf^1_{loc}/\mathrm{L}^{1,n_p}_{\infty,loc}$ assumptions ($f\in\Lf^1_{loc}/\mathrm{L}^{1,n_p}_{\infty,loc}$ if $f\ind_{B} \in \Lf^1/\mathrm{L}^{1,n_p}_{\infty}$ for all $B$ compact) it is possible up to considering other initial conditions. To this end one should consider \textit{finite} initial conditions $\mathfrak{C}$ at time $t_0\in\R$, which means initial conditions such that
\[ \Massi_{(-\infty,t_0)}(\mathfrak{C}) <\infty \quad \text{almost surely.}\]
Theorem \ref{th3.4} applies with finite initial conditions and $\Lf^1_{loc}/\mathrm{L}^{1,n_p}_{\infty,loc}$ assumptions instead of $\Lf^1/\Ld{n_p}$. The proof is the same except that one has to adapt \eqref{CI fini 1}, \eqref{CI fini 2}, \eqref{CI fini 3} and \eqref{CI fini 4}. If $\Pg = \varnothing$, the $\Lf^1_{loc}$ assumption matches the classical criterion of non-explosion of linear Hawkes processes with empty initial condition: interaction functions are $\Lf^1_{loc}$, see Lemma 1 \cite{bacry_limit_2013}. This criterion naturally extends to nonlinear Hawkes process by assuming in addition Lipschitz link functions.

\subsection{Existence of stationary HAR processes}

Establishing the existence of stationary processes is crucial, for instance in statistical applications \cite{hansen_lasso_2015}, and because they arise as long-time limits of non-stationary processes \cite{bremaud_stability_1996}. In general, HAR processes with initial condition are not stationary. This motivates the study of HAR processes without initial condition and started at $t=-\infty$. Since HAR processes are a mix between discrete and continuous processes, we need to define what stationary means for HAR processes.

\begin{definition}[1-stationarity]
    A process $X=\Big(\big[(\lambda^m_t)_{t\in\R}, \ m\in \M\big], \big[N^m, \ m\in \M\big], \big[(W^p_{k})_{k\in \D_p}, \ p\in\Pg\big]\Big)$ is said to be 1-stationary if $X \overset{\text{law}}{=} \Big(\big[(\lambda^m_{t+1})_{t\in\R}, \ m\in \M\big], \big[N^m+1, \ m\in \M\big], \big[(W^p_{k+1})_{k\in \D_p}, \ p\in\Pg\big]\Big).$
\end{definition}

\begin{remark}
    Stationarity is defined with a period equal to 1. We cannot decrease the period since we assume that the greatest common divisor of the $n_p$ for $p\in\Pg$ is $1$ (and thus $\Q$ is $1$-periodic but not $r$-periodic for any $r<1$), see \eqref{eq2.1}.
\end{remark}

The classical condition to prove existence of stationary Hawkes processes is that the spectral radius of the matrix containing $\Lf^1$ norms of interaction functions has to be strictly smaller than one \cite{bremaud_stability_1996}. Our condition is of the same nature and also takes into account interaction functions from the AR part. The following definition introduces the matrix on which the major assumption for the existence of 1-stationary HAR processes relies.

\begin{definition}\label{def3.8}
    Let $\model\in\Model$. Recall that for $m\in\M, \ p\in\Pg$
    \begin{equation*}
    	L_m = \lip(\Phi^m), \ L_p = \lip(\Phi^p). 
    \end{equation*}
    We introduce the following matrices:
    \[ \Hmat^N_N = (L_{m} \Vert h^m_{m'}\Vert_1)_{m,m'\in\M}, \ \Hmat^N_W = (L_m\Vert h^m_{p}\VLd{n_p})_{m\in\M,p\in\Pg}, \ \Hmat^W_N=(L_p \Vert h^p_{m}\Vert_{1})_{p\in\Pg,m\in\M}\] 
    \[\text{and} \ \Hmat^W_W=(L_p \Vert h^p_{p'}\VLd{n_{p'}})_{p,p'\in\Pg}.\]
    Finally, the matrix $\Hmod{\model}$ is defined by $\Hmod{\model} = \begin{pmatrix} \Hmat^N_N &  \Hmat^N_W \\[4pt] \Hmat^W_N &  \Hmat^W_W\end{pmatrix}$. Infinite values are allowed with the convention that $\infty \times 0 = 0 \times \infty =\infty$.
\end{definition}

Let us introduce some assumptions before stating the theorem about 1-stationary HAR processes.

\begin{assumption}[$\spec$]\hypertarget{spec}{}
    Parameters $\model\in\Model$ satisfy assumption \Cspec\ if the spectral radius of the matrix $\Hmod{\model}$ introduced in Definition \ref{def3.8} is smaller than one, equivalently $\spr(\Hmod{\model})<1$, and if $b^{\alpha}_p \in \Ld{n_p}$ for all $\alpha\in\M\cup\Pg, \ p\in\Pg$.
\end{assumption}

\begin{assumption}[$\fime$]\hypertarget{fime}{}
    Random drifts $\xi$ satisfy \Cfime\ if they are bounded in $\Lrv^1$, equivalently, $\displaystyle\sup_{(p,k)\in\Q} \E{\vert \xi^p_k\vert} < \infty$.
\end{assumption}

Now, we can state the main result concerning the existence of HAR processes started at $-\infty$. It also answers questions about uniqueness and 1-stationary properties.

\begin{theorem}[Stationary HAR processes]\label{th3.9}
    Let $\model\in\Model$ and random drifts $\xi$. Suppose that assumptions \Cspec$[\model]$ and \Cfime$[\xi]$ hold. Then, up to modification, there exists a unique $\Equ{\model,\xi}{-\infty}$ process such that
    \[ \sup_{m\in \M, \ t\in\R} \E{ \vert \lambda^m_t \vert} < \infty \ \ \text{and} \ \sup_{p\in\Pg, \ k\in\D_p} \E{ \vert W^p_{k} \vert} < \infty.\]
    Moreover, this process is 1-stationary.
\end{theorem}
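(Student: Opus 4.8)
The plan is to construct the stationary process by a Picard iteration driven directly by the Poisson measures $(\pi^m)_{m\in\M}$ and the random drifts $(\xi^p_k)_{(p,k)\in\Q}$, which is the classical route in the Hawkes setting \cite{bremaud_stability_1996}, with the matrix $H_{\model}$ of Definition \ref{def3.8} governing a geometric contraction that couples the point-process coordinates and the chain coordinates; uniqueness and $1$-stationarity then follow from the same estimate. First, \Cspec\ forces every entry of $H_{\model}$ to be finite --- with the convention $\infty\times 0=\infty$ a single infinite entry already makes $\spr(H_{\model})$ infinite --- so $L_m<\infty$, $h^m_{m'},h^p_m\in L^1$ and $\|h^m_p\|_{1,n_p}^{\infty}<\infty$ for all indices, and in particular \Cspec\ implies \Cnoexpl. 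I would set $X^{(0)}:=\big((\lambda^{m,(0)}\equiv 0)_m,(S^{m,(0)}:=\varnothing)_m,(W^{p,(0)}\equiv 0)_p\big)$ and, for $n\ge 0$, define $X^{(n+1)}$ by inserting $X^{(n)}$ into the right-hand sides of \eqref{eq2.2}: first $\lambda^{m,(n+1)}$ through \eqref{eq2.2a}, then $S^{m,(n+1)}$ by thinning $\pi^m$ below $\lambda^{m,(n+1)}$ through \eqref{eq2.2b}, then $W^{p,(n+1)}$ through \eqref{eq2.2c}. Using the finiteness just noted together with $\sup_{(p,k)\in\Q}\E{|\xi^p_k|}<\infty$ from \Cfime, one checks by induction that every $X^{(n)}$ is non exploding, that $\lambda^{m,(n)}$ is the intensity of $S^{m,(n)}$ (Appendix \ref{appendixA}) so that $\E{S^{m,(n)}(A)}=\int_A\E{\lambda^{m,(n)}_s}\,ds$, and that $\sup_{m,t}\E{\lambda^{m,(n)}_t}<\infty$ and $\sup_{(p,k)\in\Q}\E{|W^{p,(n)}_k|}<\infty$.

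The core is the contraction estimate. Since $S^{m,(n+1)}$ and $S^{m,(n)}$ are thinnings of the same $\pi^m$, the synchronous coupling shows that their coupled difference measure is dominated by the atoms of $\pi^m$ lying between $\lambda^{m,(n+1)}$ and $\lambda^{m,(n)}$, whence $\E{\int f(t-s)\,|d(S^{m,(n+1)}-S^{m,(n)})_s|}\le\|f\|_1\,\sup_s\E{|\lambda^{m,(n+1)}_s-\lambda^{m,(n)}_s|}$ for $f\ge 0$. Combining this with the Lipschitz bounds on $\Phi^m,\Phi^p,\kappa^\alpha_p$ and with the inequality $\int_{-\infty}^{t-}|h^m_p(t-k)|\,d\mp{p}{k}\le\|h^m_p\|_{1,n_p}^{\infty}$ built into the norm $\|\cdot\|_{1,n_p}^{\infty}$, one obtains that the vector
\[ \gamma_n:=\Big(\big(\sup_t\E{|\lambda^{m,(n+1)}_t-\lambda^{m,(n)}_t|}\big)_{m\in\M},\ \big(\sup_k\E{|W^{p,(n+1)}_k-W^{p,(n)}_k|}\big)_{p\in\Pg}\Big)\in\R^{M+P}_+ \]
satisfies $\gamma_{n+1}\preceq H_{\model}\,\gamma_n$, hence $\gamma_n\preceq H_{\model}^{\,n}\gamma_0$ with $\gamma_0$ finite (by the same induction together with \Cfime); since $\spr(H_{\model})<1$, Gelfand's formula gives $|\gamma_n|_\infty\le C\rho^n$ for some $\rho\in(\spr(H_{\model}),1)$. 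This summability yields the limit: on each compact $K$, $\sum_n\E{\int_K|\lambda^{m,(n+1)}_s-\lambda^{m,(n)}_s|\,ds}<\infty$ produces an a.e.-$(\omega,t)$ (and $L^1_{loc}$) limit $\lambda^m$, taken in a predictable version; $\sum_n\E{|W^{p,(n+1)}_k-W^{p,(n)}_k|}<\infty$ produces an a.s. limit $W^p_k$; and $\sum_n\E{|S^{m,(n+1)}(C)-S^{m,(n)}(C)|}<\infty$, the summands being integer valued, produces an a.s. limit point process $S^m$, which is non exploding by Fatou since $\sup_n\E{S^{m,(n)}((t,t+1])}\le\sup_t\E{\lambda^{m,(1)}_t}+\sum_{j\ge 1}|\gamma_j|_\infty<\infty$. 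The same estimates give $\sup_{m,t}\E{|\lambda^m_t|}<\infty$ and $\sup_{(p,k)\in\Q}\E{|W^p_k|}<\infty$.

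It then remains to check that $X=\big((\lambda^m)_m,(S^m)_m,(W^p)_p\big)$ solves \eqref{eq2.2}, that it is the unique such process with finite first moments, and that it is $1$-stationary. In \eqref{eq2.2a} and \eqref{eq2.2c} the linear functionals of $S^{(n)}$ and of $\kappa^\alpha_p(W^{(n)})$ converge in $L^1$ --- the same norm bounds, plus $\E{|\kappa^\alpha_p(W^{p',(n)}_{k'})-\kappa^\alpha_p(W^{p'}_{k'})|}\le a^\alpha_p\,\E{|W^{p',(n)}_{k'}-W^{p'}_{k'}|}$ --- and $\Phi^m,\Phi^p$ are continuous, so $X$ satisfies \eqref{eq2.2a} and \eqref{eq2.2c}; for \eqref{eq2.2b} one uses $\sup_s\E{|\lambda^{m,(n)}_s-\lambda^m_s|}\le\sum_{j\ge n}|\gamma_j|_\infty\to 0$, so that $\int_{C\times\R_+}\ind_{x\le\lambda^{m,(n)}_s}\,d\pi^m\to\int_{C\times\R_+}\ind_{x\le\lambda^m_s}\,d\pi^m$ in $L^1$ while $S^{m,(n)}(C)\to S^m(C)$ almost surely, which identifies $S^m(C)=\int_{C\times\R_+}\ind_{x\le\lambda^m_s}\,d\pi^m$ for each $C$ and then for all $C$ by a monotone-class argument. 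For uniqueness, if $X$ and $\tilde X$ are two such processes, the same coupling estimate applied to $\zeta:=\big((\sup_t\E{|\lambda^m_t-\tilde\lambda^m_t|})_m,(\sup_k\E{|W^p_k-\tilde W^p_k|})_p\big)$, finite by the moment bounds, gives $\zeta\preceq H_{\model}\zeta\preceq\cdots\preceq H_{\model}^{\,n}\zeta\to 0$, hence $\zeta=0$; thus $\lambda^m=\tilde\lambda^m$ almost everywhere, $W^p_k=\tilde W^p_k$ almost surely, and then $S^m=\tilde S^m$ almost surely (same $\pi^m$, same intensity), so $X$ is a modification of $\tilde X$. Finally, the solution map is equivariant under the unit time-shift --- change variables $u=s-1$ in the convolutions, using $\D_p+1=\D_p$ so that the $\mp{p}{}$ are shift invariant, with $1$ the relevant period by \eqref{eq2.1} --- while $(\pi^m)_{m\in\M}$ is stationary under integer shifts because Lebesgue measure is translation invariant and $(\xi^p_k)_{(p,k)\in\Q}$ is because the $\xi^p_k$ are independent with $\xi^p_k\overset{\text{law}}{=}\xi^p_{k+n}$; by uniqueness the constructed process is $1$-stationary.

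The hardest part will not be the linear algebra of $H_{\model}$ but the point-process analysis around it: at each step one must legitimize $\E{|S^{m,(n+1)}(C)-S^{m,(n)}(C)|}\le\int_C\E{|\lambda^{m,(n+1)}_s-\lambda^{m,(n)}_s|}\,ds$, which rests on $\lambda^{m,(n)}$ genuinely being the intensity of $S^{m,(n)}$ (Appendix \ref{appendixA}), and one must pass to the limit in the thinning equation \eqref{eq2.2b}, where a.e.-in-$(\omega,t)$ convergence of $\lambda^{m,(n)}$ is not sufficient on its own --- it is precisely the uniform-in-$s$ bound $\sup_s\E{|\lambda^{m,(n)}_s-\lambda^m_s|}\to 0$ extracted from the matrix contraction that makes the $L^1$-convergence of $\int\ind_{x\le\lambda^{m,(n)}_s}\,d\pi^m$ work.
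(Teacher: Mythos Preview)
Your proposal is correct and follows essentially the same Picard-iteration route as the paper's own proof, with the identical contraction $\gamma_{n+1}\preceq H_{\model}\,\gamma_n$, the same summability-via-$\spr(H_{\model})<1$ argument for a.s.\ and $L^1$ convergence, and the same uniqueness and shift-equivariance steps; the only cosmetic differences are your zero initialization versus the paper's $(\Phi^m(\mu_m),\xi^p_k)$, and your explicit remark that \Cspec\ forces the entries of $H_{\model}$ to be finite. One harmless slip: \Cspec\ does not literally imply \Cnoexpl, since finiteness of $\|h^m_p\|_{1,n_p}^{\infty}$ need not give $h^m_p\in L^1$, but your argument (like the paper's) only ever uses the discrete norm on $h^m_p$, so nothing is affected.
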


For classical Hawkes processes, if $\Pg=\varnothing$, Brémaud and Massoulié \cite{bremaud_stability_1996} have proved that a stationary version exists if $\spr(\Hmat^N_N)<1$. Assumption \Cspec$[\model]$ naturally extends this condition. For AR$(\infty)$ processes $W_k = \epsilon_k + \sum_{n\geq 1} a_n W_{k-n}$ with innovations $\epsilon_k, \ k\in\Z$ i.i.d., a sufficient condition for existence and stationarity in $\Lrv^1$ (resp. $\Lrv^2$) is $\epsilon_0 \in \Lrv^1$ (resp. $\Lrv^2$) and $\sum_{n\geq 1} \vert a_n \vert <1$. Assumption \Cfime$[\xi]$ is a very mild assumption on the random drifts and naturally extends $\epsilon_0 \in \Lrv^1$. Norms $\Vert h^p_{p'}\VLd{n_{p'}}$ are the analogue of the sum $\sum_{n\geq 1} \vert a_n \vert$ and, in the multivariate case, being less than $1$ is replaced by a spectral radius condition, see \cite{lutkepohl_new_2005}. In addition to providing an existence result, Theorem \ref{th3.9} also shows uniqueness among processes bounded in $\Lrv^1$. Finally, it validates the intuition that a process started at $-\infty$ is stationary since it has already evolved over an infinite time horizon. 

\section{Cluster representation for Linear HAR processes}\label{sec4}

In order to go further in the analysis, we now focus on a subclass of HAR processes for which explicit computations become tractable. In analogy with linear Hawkes processes and their cluster representation \cite{hawkes_cluster_1974}, linear HAR processes play a central role as they also admit a cluster representation, which allows one to isolate and quantify the contribution of each point to future events. We define the $\lm$ class, the class of linear parameters, as follows. 

\begin{definition}[$\lm$ class]\label{def4.1}
Parameters $\model\in\Model$ and random drifts $\xi$ are said to be linear, or equivalently $(\model,\xi) \in  \lm$ if 
\begin{itemize}
    \item (linearity 1) $\forall m\in \M$, $\Phi^m = \mu_m + \id$ with $\mu_m\in\R_+$ and $\forall p\in \Pg$, $\Phi^p = \id$,
    \item (linearity 2) $\forall \alpha\in \M\cup\Pg$, $\forall p\in\Pg$ we have $\J^{\alpha}_p(s,w) =  h^{\alpha}_p(s)w + b^{\alpha}_p(s)$ for any $s,w\in\R$,
    \item (positiveness 1) $\forall \alpha\M\cup\Pg$, $\forall m\in\M$, we have $h^{\alpha}_{m} \geq 0$,
    \item (positiveness 2) $\forall (p,k)\in\Q$, we have $\xi^p_k \geq 0$.
\end{itemize}
\end{definition}

\begin{remark}
    To ensure that the intensity remains non-negative, if there is an initial condition it has to be non-negative (ie $\mathfrak{C}^p_k \geq 0$ or $\mathfrak{C}^p_k = \varnothing$). Since by definition $h^{\alpha}_p,b^{\alpha}_p$ are already non-negative assumption (positiveness 1) is only about $h^{\alpha}_{m}$.
\end{remark}

For $(\model,\xi)\in\lm$, equation \eqref{eq2.2c} writes as follows,
\[W^p_k = \xi^p_k + \sum_{p'\in\Pg} \itg_{-\infty}^{k-} b^p_{p'}(k-k')d\mp{p'}{k'} + \sum_{m\in\M} \itg_{-\infty}^{k-} h^p_m(k-s)dN^{m}_s + \sum_{p'\in\Pg} \itg_{-\infty}^{k-} h^p_{p'}(k-k')W^{p'}_{k'}d\mp{p'}{k'}.\]
Equation \eqref{eq2.2c} is basically a classical linear AR equation on $W$. Heuristically one has,
\begin{align*}
    W^p_k & = \xi^p_k + (\text{non random term}) + (\text{points linear contribution up to} \ k)  + \sum_{p',k'<k} h^{p}_{p'}(k-k') W^{p'}_{k'}\\
    & = (\text{something that depends on k}) + \sum_{p',k'<k} h^{p}_{p'}(k-k') W^{p'}_{k'}.
\end{align*} 
This recursive relation is the starting point to derive the cluster representation for HAR processes. By solving it completely we will be able to express the chains only in terms of linear functions of points and random drifts. Then introducing this expression of the chains in the intensity equation will lead us to the cluster representation. In the next subsections we solve completely this type of recursive relation to then be able to apply these results to HAR processes. 

\subsection{Solving linear recursive relation of $W$}

In this section we solve the linear recursive relation of $W$ on itself. In the simple one dimensional case it looks like this. Let $(w_n)_{n\geq 0}$ a sequence defined as follows
\[ w_n = u_n + \sum_{0\leq k<n} h(n-k)w_k \quad \iff \quad w = u + h\star w,\]
where the sequence $u$ is given and $h$ is a fixed (say non-negative) function. A natural wish is to have an expression of $w_n$ depending only on the sequence $u$. To do so, we need to unroll the recursive relationship. Iterating this relation suggests that $w_n$ can be expressed as a linear combination of the sequence $u$, namely,
\[ w_n = \sum_{0\leq k\leq n} C^n_k u_k,\]
with $(C^n_k)_{n\geq k}$ be some coefficients depending only on $h$, basically they are $(\delta_0-h)^{\star -1}$, see classical AR theory \cite{box_box_2013,shumway_time_2025,lutkepohl_new_2005}. These coefficients grasp the complexity of the nested dependence of the sequence $w$ on itself. The goal of this section is to prove useful properties on these coefficients, in the multidimensional case (since we have $P$ times series) and on a domain $(\D_p)_{p\in\Pg}$, more complex than the integers.

The following lemma introduces coefficients $\coef{p,k}{p',k'}{\model}$ that encode the cumulative effect of the recursive interactions, analogues of the coefficients $C^k_{k'}$'s in the multidimensional case.

\begin{lemma}\label{lem4.3}
    Let $\model\in\lm$. There exists a unique set of coefficients $\left(\coef{p,k}{p',k'}{\model}\right)_{(p,k),(p',k')\in\Q, \ k\geq k'}$ such that,
    \begin{enumerate}
        \item For any $(p_0,k_0)\in\Q$, for any sequence $(Z^p_{k})_{(p,k)\in\Q, \ k\geq k_0}$, the sequence $(V^p_{k})_{(p,k)\in\Q, \ k\geq k_0}$ defined by
        \begin{equation}\label{eq4.1}
            V^p_{k} = Z^p_{k} + \sum_{p'\in\Pg} \itg_{k_0}^{k-} h^{p}_{p'}(k-k') V^{p'}_{k'} d\mp{p'}{k'}
        \end{equation} 
        satisfies the following equality, $V^p_{k} = \sum_{p'\in\Pg} \itg_{k_0}^{k} \coef{p,k}{p',k'}{\model} Z^{p'}_{k'} d\mp{p'}{k'}$.
        \item The coefficients $\left(\coef{p,k}{p',k'}{\model}\right)_{(p,k),(p',k')\in\Q, \ k\geq k'}$ are 1-periodic, which means that for every $n\in\Z$, $\coef{p,k+n}{p',k'+n}{\model} = \coef{p,k}{p',k'}{\model}$.
        \item For all $(p_0,k_0)\in\Q$ we have $\coef{p_0,k_0}{p_0,k_0}{\model} = 1$. If there exists $(p_1,k_1)\in\Q$ with $k_1=k_0$ then $\coef{p_1,k_0}{p_0,k_0}{\model}=\ind_{p_0=p_1}$. Finally for $(p',k')\in\Q$ with $k>k'$,
        \[ \coef{p,k}{p',k'}{\model} = \sum_{p''\in\Pg} \itg_{k'}^{k-} h^{p}_{p''}(k-k'') \coef{p'',k''}{p',k'}{\model} d\mp{p''}{k''} \geq 0.\]
    \end{enumerate}
\end{lemma}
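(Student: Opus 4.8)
The plan is to solve the recurrence \eqref{eq4.1} by hand and then read off the coefficients. First I would note that, for a fixed base time $k_0$, the set of event times $T_{k_0}:=\big(\bigcup_{p\in\Pg}\D_p\big)\cap[k_0,\infty)$ is locally finite, because each $\D_p=\theta_p+\tfrac{1}{n_p}\Z$ and $\Pg$ is finite; enumerating it increasingly as $k_0=t_0<t_1<t_2<\cdots$, the right-hand side of \eqref{eq4.1} at $k=t_j$ involves only $Z^p_{t_j}$ and the finitely many values $V^{p'}_{k'}$ with $k'\in T_{k_0}$, $k'<t_j$ (the upper limit $k-$ makes the dependence strictly causal). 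A strong induction on $j$ — updating at step $j$ all chains $p$ with $t_j\in\D_p$ at once — then gives existence and uniqueness of the solution $(V^p_k)_{(p,k)\in\Q,\,k\ge k_0}$, with $V^p_{k_0}=Z^p_{k_0}$ since the integral over $[k_0,k_0)$ is empty. Since $Z\mapsto V^p_k$ is linear and depends on only finitely many entries $Z^{p'}_{k'}$ (those with $k_0\le k'\le k$), there are unique reals $c_{k_0}(p,k;p',k')$ with $V^p_k=\sum_{p'\in\Pg}\itg_{k_0}^{k}c_{k_0}(p,k;p',k')\,Z^{p'}_{k'}\,d\mp{p'}{k'}$; moreover, since $\model\in\lm$ forces $a^p_{p'}h^p_{p'}\ge0$, the same induction shows $c_{k_0}(p,k;p',k')\ge0$.

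Next I would \emph{define} $\coef{p,k}{p',k'}{\model}$, for $(p,k),(p',k')\in\Q$ with $k\ge k'$, to be $V^p_k$ for the solution $V$ of \eqref{eq4.1} with $k_0=k'$ and Dirac data $Z^{p''}_{k''}=\ind_{(p'',k'')=(p',k')}$, and then show the $c_{k_0}(p,k;p',k')$ above are independent of $k_0$ and equal these. The key is a base-shift property: if $k_0\le k_0'$ and $Z$ vanishes on $[k_0,k_0')$, then the same strong induction (every event time $<k_0'$ sees only vanishing data and vanishing earlier values) shows the solution started at $k_0$ vanishes on $[k_0,k_0')$; hence for $k\ge k_0'$ the part of the integral in \eqref{eq4.1} over $[k_0,k_0')$ disappears, so $V|_{[k_0',\infty)}$ solves \eqref{eq4.1} with base $k_0'$ and, by uniqueness, coincides with the solution started at $k_0'$. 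Applying this to the Dirac data at $(p',k')$, which vanishes on $[k_0,k')$ for every $k_0\le k'$, gives $c_{k_0}(p,k;p',k')=\coef{p,k}{p',k'}{\model}$ for all admissible $k_0$; plugging back into the representation of Step~1 yields property~1, and feeding Dirac data into any family satisfying property~1 shows it must coincide with $\coef{}{}{\model}$, i.e. uniqueness.

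Finally I would verify properties 2 and 3 from the defining recurrence. Property~3 is immediate: evaluating \eqref{eq4.1} at $k=k'$ gives $\coef{p_0,k_0}{p_0,k_0}{\model}=1$ and, when $k_1=k_0$, $\coef{p_1,k_0}{p_0,k_0}{\model}=\ind_{p_0=p_1}$; for $k>k'$ the Dirac term vanishes and $V^{p''}_{k''}=\coef{p'',k''}{p',k'}{\model}$ throughout the integration range, which is the recursive formula, nonnegativity being already known from Step~1. For property~2 I would use that $\D_p+1=\D_p$ (as $n_p\in\N^*$), whence $(p,k)\in\Q\iff(p,k+1)\in\Q$ and $\mp{p}{}$ is invariant under the unit shift: writing $G^p_k=\coef{p,k}{p',k'}{\model}$, the sequence $\widetilde G^p_k:=G^p_{k-1}$ solves, after the substitution $k''\mapsto k''+1$ in the integral of \eqref{eq4.1}, that same equation with base $(p',k'+1)$ and Dirac data at $(p',k'+1)$; by uniqueness $\widetilde G^p_k=\coef{p,k}{p',k'+1}{\model}$, i.e. $\coef{p,k+1}{p',k'+1}{\model}=\coef{p,k}{p',k'}{\model}$, and iterating in both directions gives the claim for all $n\in\Z$.

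The only genuinely delicate point is the base-shift argument of Step~2 — establishing that the extracted coefficients do not depend on the starting time $k_0$; everything else is bookkeeping with the locally finite sets $\D_p$, the counting measures $\mp{p}{}$, and strong induction over event times.
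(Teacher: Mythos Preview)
Your proof is correct and follows essentially the same approach as the paper: solve the recurrence by induction over the locally finite event times, extract linear coefficients, use Dirac data for uniqueness, and verify properties~2 and~3 via the shift argument and the defining recursion respectively. The one notable difference is that you make explicit the base-shift argument (independence of the coefficients from the starting time $k_0$), which the paper handles more informally by simply saying that ``unrolling'' \eqref{eq4.1} gives a fixed linear combination; your treatment of this point is more careful than the paper's.
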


Lemma \ref{lem4.3} is about the combinatorial properties of the coefficients. The coefficients only depend on $\model$, and they can be computed recursively with the third point of Lemma \ref{lem4.3}.

Recall from Definition \ref{def3.8} that $\Hmat^W_W$ is the matrix $(L_p \Vert h^p_{p'}\VLd{n_{p'}})_{p,p'\in\Pg}$. Let us consider the following assumption.

\begin{assumption}[$\specbis$]\hypertarget{specbis}{}
    Parameters $\model\in\Model$ satisfy assumption \Cspecbis\ if the spectral radius of the matrix $H_W^W$ defined in Definition \ref{def3.8} is smaller than one, equivalently $\spr(\Hmat^W_W)<1$.
\end{assumption}

Under assumption \Cspecbis$[\model]$ we obtain analytic results on the coefficients, in particular summability of coefficients, see Lemma \ref{lemA.1}, and we also extend the use of coefficients to random sequences, see Proposition \ref{propA.2}, both in Appendix \ref{appendixA}. These results are useful in the sequel, for example to obtain controls on clusters of HAR processes.

We end this section with the following result that gives a closed form for the coefficients by giving a sense to the multivariate convolution on the domain $(\D_p)_{p\in\Pg}$. 

Let $(p,k)\in\Q$ and $(p',k')\in\Q$ with $k'<k$. A path $\gamma$ from $(p',k')$ to $(p,k)$ is a sequence $(p_1,k_1),\cdots,(p_n,k_n) $ in $\Q$, where $n\geq 2$ is an integer and such that $(p_1,k_1)=(p',k')$, $(p_n,k_n)=(p,k)$ and $k_1<\cdots<k_n$. Given parameters $\model\in\lm$ and a path $\gamma = (p_1,k_1),\cdots,(p_n,k_n) $ from  $(p',k')$ to $(p,k)$ we denote 
\[Y_{\model}(\gamma) := \prod_{i=1}^{n-1} a^{p_{i+1}}_{p_i} h^{p_{i+1}}_{p_i} (k_{i+1}-k_i).\]

\begin{proposition}[Closed form for coefficients]\label{prop4.4}
    Let $\model\in\lm$. Let $(p',k')\in\Q$ and $(p,k)\in\Q$ with $k'<k$. Then we have
    \[ \coef{p,k}{p',k'}{\model} = \sum_{\gamma : (p',k')\to(p,k)} Y_{\model}(\gamma).\]
\end{proposition}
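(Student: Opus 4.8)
The plan is to prove the identity by induction on the number of points of $\Q$ lying strictly between time $k'$ and time $k$, that is, on $N := \card\{(p'',k'')\in\Q \ : \ k' < k'' < k\}$, which is finite because each $\D_{p''}$ is a discrete set and $(k',k)$ is bounded. I will compare the recursive characterisation of $\coef{p,k}{p',k'}{\model}$ given in point (3) of Lemma \ref{lem4.3} with the analogous decomposition of the path sum $\sum_{\gamma : (p',k')\to(p,k)} Y_{\model}(\gamma)$ obtained by conditioning on the \emph{penultimate} vertex of the path.

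First I would record the key combinatorial decomposition of the path sum. Every path $\gamma = (p_1,k_1),\dots,(p_n,k_n)$ from $(p',k')$ to $(p,k)$ with $n\geq 2$ has a penultimate vertex $(p_{n-1},k_{n-1})$ with $k' \le k_{n-1} < k$; moreover $(p_1,\dots,p_{n-1})$ is either the trivial one-vertex "path" (when $n=2$, i.e.\ $(p_{n-1},k_{n-1}) = (p',k')$) or itself a genuine path from $(p',k')$ to $(p_{n-1},k_{n-1})$. Grouping paths by their penultimate vertex and using the multiplicativity $Y_{\model}(\gamma) = Y_{\model}(\gamma') \cdot a^{p}_{p_{n-1}} h^{p}_{p_{n-1}}(k - k_{n-1})$, where $\gamma'$ is $\gamma$ with its last vertex removed, gives
\begin{equation*}
    \sum_{\gamma : (p',k')\to(p,k)} Y_{\model}(\gamma) = \sum_{p''\in\Pg} \itg_{k'}^{k-} a^{p}_{p''} h^{p}_{p''}(k-k'') \Bigg( \ind_{(p'',k'')=(p',k')} + \sum_{\gamma' : (p',k')\to(p'',k'')} Y_{\model}(\gamma') \Bigg) d\mp{p''}{k''},
\end{equation*}
where the indicator term captures the length-one path and is picked up only when $k''=k'$ (consistent with point (3) of Lemma \ref{lem4.3}, which gives $\coef{p'',k'}{p',k'}{\model} = \ind_{p''=p'}$ in that case). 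The bracketed quantity is exactly $\coef{p'',k''}{p',k'}{\model}$: when $k''=k'$ by the second assertion of point (3), and when $k'>k''>k'$, wait --- when $k'' > k'$ --- by the induction hypothesis, since the number of points of $\Q$ strictly between $k'$ and $k''$ is strictly smaller than $N$. Substituting this identification turns the displayed equation into precisely the recursion of point (3) of Lemma \ref{lem4.3} for $\coef{p,k}{p',k'}{\model}$, and since that recursion together with the base cases uniquely determines the coefficients (the uniqueness clause of Lemma \ref{lem4.3}), the two quantities coincide.

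The base case $N=0$ is immediate: then the only path from $(p',k')$ to $(p,k)$ is $\gamma = (p',k'),(p,k)$, so the path sum equals $a^{p}_{p'} h^{p}_{p'}(k-k')$, while the recursion of Lemma \ref{lem4.3}(3) has only the term $k''=k'$ surviving and $\coef{p',k'}{p',k'}{\model}=1$, giving the same value. The main obstacle --- really a bookkeeping point more than a genuine difficulty --- is making the "penultimate vertex" decomposition rigorous as an identity of sums/integrals over the discrete measures $\mp{p''}{}$: one must check that the sum over paths is genuinely finite (again because $(k',k)$ contains only finitely many points of $\bigcup_{p''}\D_{p''}$, so there are finitely many admissible vertex sequences), that Fubini/rearrangement is therefore unproblematic, and that the degenerate case $n=2$ is correctly folded into the indicator term so that the formula matches the two separate base clauses in Lemma \ref{lem4.3}(3) (the case $k''=k'$ versus $k''>k'$). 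Once this is set up cleanly the induction closes without further computation.
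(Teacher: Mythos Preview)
Your proposal is correct and takes essentially the same approach as the paper: both verify that the path sum satisfies the recursion of Lemma~\ref{lem4.3}(3) by splitting each path at its penultimate vertex (``all the first steps except the last one and the last one'' in the paper's words), and then invoke the uniqueness clause. Your write-up is simply more detailed, making the induction explicit and checking the base case and bookkeeping that the paper leaves implicit.
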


This formula shows that $\coef{p,k}{p',k'}{\model}$ can be interpreted as sums over all possible propagation paths from $(p',k')$ to $(p,k)$.

\subsection{Cluster representation for linear HAR processes}

In this section we use the coefficients defined in the previous one to express $W$ only in terms of $N$ and $\xi$ in order to derive a cluster representation for linear HAR processes. Let parameters and random drifts $(\model,\xi)\in\lm$ and $t_0\in \{-\infty\}\cup\R$. We state the result for HAR processes without initial condition.

Let us define quantities useful to express $W$ as functional of $\xi$ and $N$. For $(p,k)\in\Q$ with $k\geq t_0$ and $t_0 \leq s \leq k$, define,

\begin{equation}\label{eq4.2}
        \left\{
        \begin{aligned}
            & \Hd^{b}_{p,k}(t_0)  = \sum_{p'\in\Pg}\itg_{t_0}^k \coef{p,k}{p',k'}{\model} \left[\sum_{p''\in\Pg} \itg_{t_0}^{k'-} b^{p'}_{p''}(k'-k'')d\mp{p''}{k''}\right] d\mp{p'}{k'}\\
            & \Hd^{\xi}_{p,k}(t_0)  = \sum_{p'\in\Pg}\itg_{t_0}^k \coef{p,k}{p',k'}{\model} \xi^{p'}_{k'} d\mp{p'}{k'}\\
            & \Hd^m_{p,k}(t_0,s)  = \sum_{p'\in\Pg}\itg_{t_0}^k \coef{p,k}{p',k'}{\model}h^{p'}_m(k'-s)d\mp{p'}{k'}.
        \end{aligned}
        \right.
\end{equation}

Each of these quantities encloses the total impact at time $k$ on $W^p$ of, from up to down, functions $b_{p'}^{p}$, the random drifts $\xi$, point process $N^m$. If $t_0 = -\infty$ we may omit $t_0$ in the notation an only write only $\Hd^{b}_{p,k}$ and so on.

The following result is the main theorem of this section, it states a cluster representation for linear models. The idea is to express the process $W$ as a linear functional of past points and random drifts, and then plug this representation into the intensity. This leads to a representation of the intensity as a sum of independent contributions, which can be interpreted as clusters. 

\begin{theorem}[Cluster representation for linear HAR processes]\label{th4.5}
    Let $(\model,\xi)\in\lm$. If $t_0\in \R$ suppose that assumption \Cnoexpl$[\model]$ holds and if $t_0 = -\infty$ suppose that assumptions \Cspec$[\model]$ and \Cfime$[\xi]$ hold. Then the $\Equ{\model,\xi}{t_0}$ process from Theorem \ref{th3.4} (if $t_0\in\R$) or Theorem \ref{th3.9} (if $t_0=-\infty$) is given by
    \[ W^p_{k} = \Hd^{b}_{p,k}(t_0) + \Hd^{\xi}_{p,k}(t_0) + \sum_m \itg_{t_0}^{k-} \Hd^m_{p,k}(t_0,s) dN^m_s, \quad (p,k)\in\Q, \ k\geq t_0, \]
    and there exists a cluster representation given by
    \begin{equation}\label{eq cluster rep}
        \lambda^m_t  = \I^m(t_0,t) + \sum_{m'} \itg_{t_0}^{t-} \mathfrak{h}^m_{m'}(s,t)dN^{m'}_s, \quad m\in\M, \ t\geq t_0.
    \end{equation} 
    Any $s\in N^{m_0}$ increases the intensity at time $t>s$ of $N^m$ by
   \begin{equation}\label{eq4.3}
       \mathfrak{h}^m_{m_0}(s,t) = h^m_{m_0}(t-s) + \sum_{p,p'\in\Pg}\itg_{s^+}^{t-}\itg_{s^+}^k  h^m_{p}(t-k) \coef{p,k}{p',k'}{\model}h^{p'}_{m_0}(k'-s) d\mp{p'}{k'} d\mp{p}{k},
   \end{equation}
   and the immigrant rate is given by
   \begin{equation}\label{eq4.4}
       \I^m(t_0,t) = \mu_m + \sum_{p\in\Pg} \itg_{t_0}^{t-} \left[ h^m_{p}(t-k)\times(\Hd^{b}_{p,k}(t_0) + \Hd^{\xi}_{p,k}(t_0))+b^m_p(t-k)\right]d\mp{p}{k}.
   \end{equation}
    Finally if assumption \Cspecbis$[\model]$ holds (which is the case under assumption \Cspec$[\model]$), 
    \begin{equation}\label{eq4.5}
        \Vert \mathfrak{h}^m_{m_0}(s,\cdot)\Vert_{1} \leq \Vert h^m_{m_0}\Vert_1 + \sum_{p,p'\in\Pg} n_p \Vert h^m_p\Vert_1 \Big[ (\Id-\Hmat^W_W)^{-1} \Big]_{p,p'} \Vert h^{p'}_{m_0} \VLd{1}.
    \end{equation}
\end{theorem}

This representation shows that HAR processes can be decomposed into independent clusters, similarly to Hawkes processes, but with a more involved interaction structure due to the presence of the discrete component. Each immigrant point (coming from the immigration part of the intensity) is a new root for a new cluster. Indeed, each immigrant point, say from $N^m$ at time $s$, will have some children since the intensity of $N^{m'}$ at time $s+t$ is increased by $\mathfrak{h}^{m'}_m(s,s+t)$. Similarly, each child will have children and so on, creating a cluster, i.e. the whole family of the initial immigrant point. Conditionally to the immigrants, all the clusters are independent. Cluster representation is a powerful tool, to derive result on the whole process; one can look at what happens for a single cluster and then it only remains to superpose the contributions of each cluster, which can be performed easily since clusters are independent. Remark that if $\Pg=\varnothing$ then sums over $\Pg$ disappears and we are left with $\mathfrak{h}^m_{m_0}(s,t) = h^m_{m_0}(t-s)$ and $\I^m(t_0,t) = \mu_m$ with is the classical linear Hawkes process.

\subsection{Structure of clusters}\label{sec4.3}

In this subsection we give a precise description of HAR clusters. Clusters are branching structures where each point generates offspring according to the functions $\mathfrak{h}^m_{m'}$. The following definition recalls the expression of the $\mathfrak{h}^m_{m'}$'s and introduces a relevant matrix.

\begin{definition}[Cluster functions and matrix]\label{def4.7}
    Let parameters $\model\in\lm$ . The \textit{cluster functions} associated to linear parameters $\model$ are the functions $\mathfrak{h}^m_{m'}$ for $m,m'\in\M$ defined in Theorem \ref{th4.5} by, for $s<t$,
    \[\mathfrak{h}^m_{m'}(s,t) = h^m_{m'}(t-s) + \sum_{p,p'\in\Pg}\itg_{s+}^{t-}\itg_{s+}^k h^m_{p}(t-k) \coef{p,k}{p',k'}{\model}h^{p'}_{m'}(k'-s) d\mp{p'}{k'} d\mp{p}{k}.\]
    For $\model\in\lm$, the matrix $\mathfrak{H}_{\model}$ is defined by
    \begin{equation}\label{eq4.7}
    \mathfrak{H}_{\model} = \left( \sup_{s\in\R}  \Vert \mathfrak{h}^{m}_{m'}(s,\cdot)\Vert_{1} \right)_{m,m'\in\M}.
    \end{equation} 
\end{definition}

Given parameters $\model\in\lm$, we are interested in the clusters generated by the cluster functions $\mathfrak{h}^m_{m'}$ defined in Definition \ref{def4.7}. As we will see later, the matrix $(\mathfrak{H}_{\model})^{\intercal}$, defined in Definition \ref{def4.7}, gives a uniform bound on the expectation of the offspring distribution of clusters. This section follows \cite{leblanc_sharp_2025}. 

Let $\bm{\mathfrak{h}} = (\mathfrak{h}^{m}_{m'})_{m,m'\in\M}$. Clusters of linear HAR processes correspond to $\Pois( \bm{\mathfrak{h}}^{\intercal})$ clusters in \cite{leblanc_sharp_2025}. We explain below what it means, but for further details we refer the reader to \cite{leblanc_sharp_2025}. 

\begin{remark}
	The relevant matrix is $\bm{\mathfrak{h}}^{\intercal}$ and not $\bm{\mathfrak{h}}$ since $\mathfrak{h}^m_{m'}$ corresponds to how many (in expectation) children of type $m$ a point from $N^{m'}$ will have, whereas, in the proof of Theorem \ref{th3.9} for example, $\mathfrak{h}^m_{m'}$ corresponds to how much past points of $N^{m'}$ modify the intensity of $N^m$. This small change of perspective, backward or forward, is the explanation on why $\bm{\mathfrak{h}}^{\intercal}$ and $(\mathfrak{H}_{\model})^{\intercal}$ are the relevant quantities for clusters.
\end{remark}

We use the Ulam Harris Neveu notation to label the individuals of trees, see Figure \ref{fig2}, and thus a tree is viewed as a subset of $\U = \cup_{n\in\N} (\N^*)^n$ with $(\N^*)^0=\{\varnothing\}$. 

Given parameters $\model\in\lm$, a cluster $G^{m_0}_{t_0}$ with root of type $m_0\in\M$ and born at time $t_0\in\R$ is defined by $\big((u,\tp(u),\bd(u))\big)_{u\in\T}$ where
\begin{itemize}
    \item $\T\subset \U$ is a random tree. For $u\in\T$, $\tp(u)\in\M$ is the type of $u$ and $\bd(u)\in\R$ is the birth date of $u$.
    \item $\T$ has root $\varnothing$ of type $m_0$ (ie $\tp(\varnothing)=m_0$) and born at time $t_0$ (ie $\bd(\varnothing)=t_0$).
    \item Independently each individual $u\in\T$ reproduces as follows. If we denote $Z(u,m)$ the number of children of $u$ with type $m\in\M$, then for $m\in\M$ the random variables $Z(u,m)$ are independent and 
    \[Z(u,m) \sim \Pois\big(\Vert\mathfrak{h}^m_{\tp(u)}(\bd(u),\bd(u)+\cdot) \Vert_{1}\big) \quad \forall m\in\M.\]
    The birth dates of the children of $u$ are distributed as follows. Conditionally to the children of $u$ and their types, independently for any child $v\in\T$ of $u$, we have
    \[\bd(v) \sim \frac{\mathfrak{h}^{\tp(v)}_{\tp(u)}(\bd(u),t)}{ \Vert\mathfrak{h}^{\tp(v)}_{\tp(u)}(\bd(u),\bd(u)+\cdot) \Vert_{1}} \ind_{t>\bd(u)}dt.\]
\end{itemize}

One may remark that $ \mathfrak{h}^{\tp(v)}_{\tp(u)} = \big(\bm{\mathfrak{h}}^{\intercal}\big)_{\tp(u),\tp(v)}$, thus, as in \cite{leblanc_sharp_2025}, row indexes correspond to father/mother and column indexes correspond to children.

Finally, if one omits the birth dates of a cluster $G$, it remains $(u,\tp(u))_{u\in\T}$. It is clear that $(u,\tp(u))_{u\in\T}$ can be stochastically dominated by $(u,\tp_{\mathcal{A}}(u))_{u\in\mathcal{A}}$ (which means that $\T \subset \mathcal{A}$ and for $u\in\T$, we have $\tp(u)=\tp_{\mathcal{A}}(u)$) where $(u,\tp_{\mathcal{A}}(u))_{u\in\mathcal{A}}$ is a $\Pois(\mathfrak{\Hmod{\model}}^{\intercal})$ Galton Watson process. It means that independently, any point $u\in \mathcal{A}$ with type (in $\mathcal{A}$) $\tp_{\mathcal{A}}(u)$ reproduces as follows: independently for all $m\in\M$ it has $\Pois\big(\big[\mathfrak{H}_{\model}^{\intercal}\big]_{\tp_{\mathcal{A}}(u),m}\big)$ children with type $m$.

Given this precise structure of clusters, cluster representation takes the following form, which is a reformulation of \eqref{eq cluster rep}.

\begin{proposition}[Cluster representation]\label{prop4.9}
    Let $(\model,\xi)\in\lm$ and $t_0\in\R\cup\{-\infty\}$. Suppose that we are under the same assumptions as Theorem \ref{th4.5}. Let $(\Pi^m)_{m\in\M}$ be point processes such that, conditionally to $(\xi^p_k)_{(p,k)\in\Q}$, $(\Pi^m)_{m\in\M}$ are independent inhomogeneous Poisson processes with intensities given by the immigrant rates in Theorem \ref{th4.5}. Conditionally to $(\Pi^m)_{m\in\M}$ let independent $\Pois(\bm{\mathfrak{h}}^{\intercal})$ clusters $G^m_x$ with root of type $m$ and born at time $x$ for $m\in\M$ and $x\in\Pi^m$. Then we have the following equality in distribution
    \begin{equation*}
        (N^m)_{m\in\M} \overset{d}{=} \big(\{ s\in\R \mid \exists m'\in\M, \exists x\in\Pi^{m'}, \exists u\in \U \ \text{such that} \ (u,m,s)\in G^{m'}_x\}\big)_{m\in\M},
    \end{equation*}
    where $\big((N^m)_{m\in\M},(W^p)_{p\in\Pg}\big)$ is the corresponding $\Equ{\model,\xi}{t_0}$ process in Theorem \ref{th4.5}.
\end{proposition}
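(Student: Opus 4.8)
The plan is to combine Theorem~\ref{th4.5}, which represents the stationary HAR process as a linear and time-inhomogeneous Hawkes process driven by the random drifts, with the classical Hawkes--Oakes branching construction, and then to conclude via the uniqueness part of Theorem~\ref{th3.9}. Conditionally on $(\xi^p_k)_{(p,k)\in\Q}$, Theorem~\ref{th4.5} gives that the stationary point process satisfies
\[
\lambda^m_t = \I^m(-\infty,t) + \sum_{m'\in\M}\itg_{-\infty}^{t-}\mathfrak{h}^m_{m'}(s,t)\,dS^{m'}_s,\qquad m\in\M,\ t\in\R,
\]
where, once the drifts are fixed, $t\mapsto\I^m(-\infty,t)$ is a deterministic, non-negative, locally integrable function (it depends on the drifts only through the a.s.\ finite quantities $\Hd^{\xi}_{p,k}$), and the kernels $\mathfrak{h}^m_{m'}(s,t)\ind_{s<t}$ are exactly the ones generating the $\Pois(\bm{\mathfrak{h}}^{\intercal})$ clusters of Section~\ref{sec4.3}. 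Under the assumptions of Theorem~\ref{th4.5} (in particular \Cspec$[\model]$) these clusters are subcritical, i.e.\ $\spr(\mathfrak{H}_{\model})<1$: this follows from the bound \eqref{eq4.5} together with the standard fact that a non-negative block matrix of spectral radius $<1$ has a Schur complement of spectral radius $<1$. Consequently the offspring mean matrix $\mathfrak{H}_{\model}^{\intercal}$ of the clusters is subcritical.

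On a suitable probability space I would build the candidate process $\tilde S=(\tilde S^m)_{m\in\M}$ exactly as described in the statement: given the drifts, draw independent inhomogeneous Poisson processes $(\Pi^m)_{m\in\M}$ with intensities $\I^m(-\infty,\cdot)$; given $(\Pi^m)_{m\in\M}$, attach to each atom an independent $\Pois(\bm{\mathfrak{h}}^{\intercal})$ cluster; and let $\tilde S^m$ collect all type-$m$ individuals across all clusters. Subcriticality yields, through $(\Id-\mathfrak{H}_{\model}^{\intercal})^{-1}=\sum_{n\geq0}(\mathfrak{H}_{\model}^{\intercal})^{n}$, that each cluster has finite expected size with uniformly bounded $L^1$ temporal spread; combined with $\E{\I^m(-\infty,t)}<\infty$ (which follows from \Cspec$[\model]$ and \Cfime\ as in the proof of Theorem~\ref{th3.9}) and Fubini's theorem, this gives $\E{\tilde S^m([a,b])}<\infty$ for every bounded interval $[a,b]$, so that $\tilde S$ is a.s.\ locally finite and bounded in $L^1$.

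I would then identify the stochastic intensity of $\tilde S$ with respect to the filtration that also records $(\Pi^m)_{m\in\M}$ and the genealogies. By the branching structure and the Poisson superposition theorem, conditionally on the past before $t$ an atom of type $m$ is produced at rate $\I^m(-\infty,t)$ as an immigrant, plus $\mathfrak{h}^m_{m'}(s,t)$ for each already-born atom at time $s<t$ of type $m'$, and these contributions add; hence $\tilde\lambda^m_t:=\I^m(-\infty,t)+\sum_{m'\in\M}\itg_{-\infty}^{t-}\mathfrak{h}^m_{m'}(s,t)\,d\tilde S^{m'}_s$ is the predictable intensity of $\tilde S^m$. Defining $\tilde W^p_k$ by the formula of Theorem~\ref{th4.5} with $\tilde S$ in place of $S$, and reversing the algebraic manipulations of its proof (which use only the recurrence identities of Lemma~\ref{lem4.3} for the coefficients $\coef{p,k}{p',k'}{\model}$ and the definitions of $\Hd^{b},\Hd^{\xi},\Hd^{m}$), one checks that $(\tilde\lambda,\tilde S,\tilde W)$ satisfies \eqref{eq2.2a} and \eqref{eq2.2c}; adjoining an independent Poisson random measure and performing the reverse thinning in the spirit of Appendix~\ref{appendixA} produces Poisson random measures $\pi^m$ for which \eqref{eq2.2b} holds. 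Thus $(\tilde\lambda,\tilde S,\tilde W)$ is an $\Equ{\model}{-\infty}$ process bounded in $L^1$, so by the uniqueness part of Theorem~\ref{th3.9} its law is that of the stationary HAR process; reading off the point-process coordinate gives the claimed equality in distribution.

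The main obstacle is making $\tilde S$ a bona fide non-exploding point process and pinning down its conditional intensity. A.s.\ finiteness of each $\Pois(\bm{\mathfrak{h}}^{\intercal})$ cluster follows from subcriticality (as in \cite{leblanc_sharp_2025}); the delicate step is that the superposition over the Poisson immigrants on all of $(-\infty,t]$ is a.s.\ locally finite, which uses $\I^m(-\infty,\cdot)\in L^1_{loc}$ together with the uniform $L^1$-bound \eqref{eq4.5} and uniform tail control of the kernels $\mathfrak{h}^m_{m'}$, and that the branching-plus-Poisson construction indeed yields the additive conditional intensity $\tilde\lambda^m$. For the latter the cleanest route is to work with the generation-$\leq n$ truncations $\tilde S^{(n)}$, which have intensity $\I^m(-\infty,t)$ plus the contributions $\mathfrak{h}^m_{m'}(s,t)$ of the already-born atoms of generation at most $n-1$, and to pass to the limit $n\to\infty$ with uniform $L^1$ bounds paralleling the Picard estimates in the proof of Theorem~\ref{th3.9}. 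A secondary but fiddly point is the bookkeeping of the two-stage conditioning (first on the drifts, which affect only the immigrant rates, then on the immigrants) and the non-negativity needed to interchange the various sums and integrals that appear throughout.
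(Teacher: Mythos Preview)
Your overall route is valid but vastly more elaborate than what the paper does: the paper's entire proof is the sentence ``There is actually nothing to prove, the expression of $\lambda^m$ given in Theorem~\ref{th4.5} is clear.'' In other words, the paper regards Proposition~\ref{prop4.9} as a restatement of the intensity formula of Theorem~\ref{th4.5}, implicitly invoking the standard Hawkes--Oakes correspondence between a point process with intensity of the form $\I^m(t)+\sum_{m'}\int^{t-}\mathfrak{h}^m_{m'}(s,t)\,dS^{m'}_s$ and the immigration-plus-branching picture. Your plan---construct the cluster process explicitly, compute its intensity, rebuild an $\Equ{\model}{-\infty}$ process, and appeal to the uniqueness of Theorem~\ref{th3.9}---is a legitimate unpacking of that implicit step.

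There is, however, one genuine gap. You claim that \Cspec$[\model]$ yields $\spr(\mathfrak{H}_{\model})<1$ via the bound \eqref{eq4.5} and the Schur-complement fact. This does not follow: the Schur complement of $H_{\model}$ is $H^S_S+H^S_W(\Id-H^W_W)^{-1}H^W_S$, whereas \eqref{eq4.5} (written in matrix form as \eqref{eq5.1}) bounds $\mathfrak{H}_{\model}$ by $H^S_S+\tilde H^S_W(\Id-H^W_W)^{-1}\tilde H^W_S$. The paper explicitly analyses this discrepancy after Corollary~\ref{cor5.5} and notes that $\tilde H^W_S\succeq H^W_S$, so the comparison goes the wrong way; this is exactly why a separate assumption \Cspecter\ is introduced and why Proposition~\ref{prop5.6} requires the stronger hypothesis $\spr(\tilde H_{\model})<1$. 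Your argument does not truly need $\spr(\mathfrak{H}_{\model})<1$: local $L^1$ finiteness of $\tilde S$ can be obtained without it, either by the generation-$\leq n$ truncation you already mention (whose $L^1$ control comes from the same Picard estimates as in the proof of Theorem~\ref{th3.9}, driven by $H_{\model}$, not $\mathfrak{H}_{\model}$), or by thinning the stationary $S$ itself into immigrants and offspring and reading off the cluster decomposition from a process already known to be locally finite. Drop the subcriticality claim and rely on one of these instead.
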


Obtaining controls on clusters is crucial to derive properties for the whole process. For example, exponential bounds on clusters lead to exponential moments for the whole process, as explained in the following section.

\begin{figure}[ht]
\begin{center}
\begin{tikzpicture}[scale=0.7]

\draw (0,0)--(-1,1)--(-1,2)--(-1,3)--(-1,4);
\draw (-1,2)--(-1.5,3);
\draw (-1,3)--(-1.5,4);
\draw (-1,3)--(-0.5,4);
\draw (-1.5,4)--(-1.5,5);
\draw (0,0)--(1,1)--(1,2)--(1,3)--(1,4);
\draw (1,1)--(0.5,2);
\draw (1,1)--(1.5,2);

\draw (0,-0.5) node {$(u=\varnothing,\tp(\varnothing)=m_0,\bd(\varnothing)=t_0)$};
\draw (-2,1) node {$u=0$};
\draw (2,1) node {$u=1$};
\draw (-1.5,2) node {$00$};
\draw (0.3,1.7) node {$10$};
\draw (1.22,2.3) node {$11$};
\draw (1.7,1.7) node {$12$};
\draw (2,4.5) node {$(u,\tp(u),\bd(u))$};

\tikzstyle{every node}=[circle, draw, fill=RoyalBlue!80, inner sep=0pt, minimum width=5pt]

\draw (0,0) node {}; 
\draw (-1,1) node {};
\draw (1,1) node {}; 
\draw (-1,2) node {};
\draw (0.5,2) node {}; 
\draw (1.5,2) node {};
\draw (1,2) node {};
\draw (-1.5,3) node {};
\draw (-1,3) node {};
\draw (1,3) node {};
\draw (-1,4) node {};
\draw (-1.5,4) node {};
\draw (-0.5,4) node {};
\draw (1,4) node {};
\draw (-1.5,5) node {};

\end{tikzpicture}
\end{center}
\caption{Illustration of a cluster. The tree $\T$ is represented with the types and birth dates. The ancestor is $u=\varnothing$, it has two children, $u=0$ and $u=1$. $u=0$ has one child ($u=00$), $u=1$ has three children ($u=10$, $u=11$ and $u=12$) and so on.}
\label{fig2}
\end{figure}

\subsection{Application to exponential moments}

Linear HAR processes are of interest since for general couples $(\model,\xi)$, one can find $(\model^+,\xi^+) \in\lm$ such that HAR processes from $(\model,\xi)$ are dominated by those from $(\model^+,\xi^+)$.

\begin{definition}[$\lm$ domination]\label{def5.1}
    Let parameters $(\model,\xi)$, possibly nonlinear, given by 
    \[\model = \big((h^{\alpha}_{m})_{\alpha,m}, (\J^{\alpha}_p,h^{\alpha}_{p},b^{\alpha}_p)_{\alpha,p}, \ (\Phi^{m})_{m}, \ (\Phi^p)_p \big).\]
    We define the parameters $(\model^+,\xi^+)\in \lm$ by $(\xi^+)^p_k = \vert \xi^p_k\vert$ for any $(p,k)\in\Q$ and the following tabular.
    \vspace{0.5\baselineskip}
    \begin{center}
    \begin{tabular}{ |c|c|c|c|c|c| } 
    \hline
    $\model$ & $h^{\alpha}_m$ & $\J^{\alpha}_{p}(s,w)$ & $(h^{\alpha}_p,b^{\alpha}_p)$ & $\Phi^m$ & $\Phi^p$ \\ 
    \hline
    $\model^+$ & $L_{\alpha}\vert h^{\alpha}_m \vert$ & $L_{\alpha}\big(h^{\alpha}_{p}(s) w + b^{\alpha}_{p}(s)\big)\ind_{w\neq \varnothing} $ & $(L_{\alpha}h^{\alpha}_p,L_{\alpha}b^{\alpha}_p)$ & $\id+\Phi^m(0)$ & $\id$\\
    \hline
    \end{tabular}
    \end{center}
    \vspace{0.5\baselineskip}
    For any initial condition $\mathfrak{C}$ at time $t_0$, we define the initial condition $\mathfrak{C}^+$ at time $t_0$ by $\mathfrak{C}^{+, m} = \mathfrak{C}^m$, for all $m\in\M$ and for all $(p,k)\in\Q$ with $k<t_0$, $\mathfrak{C}^{+, p}_k = \varnothing$ if $\mathfrak{C}^{p}_k = \varnothing$ and $\mathfrak{C}^{+, p}_k = \vert \mathfrak{C}^{p}_k\vert$ if $\mathfrak{C}^{p}_k \in\R$. 
\end{definition}

Precise statements on the domination of $\Equ{\model,\xi}{t_0,\mathfrak{C}}$ by $\Equ{\model^+,\xi^+}{t_0,\mathfrak{C}^+}$ can be found in Proposition \ref{propB.1} in Appendix \ref{appendixB}. 

To derive exponential moment for HAR processes we need exponential bounds on clusters. Such bounds require assumptions on the cluster functions $\mathfrak{h}^m_{m'}$. As for classical Galton Watson processes, the relevant assumption is about the spectral radius of the matrix that collects the size of the interactions. Let us introduce the assumption in our case.

\begin{assumption}[$\specter$]\hypertarget{specter}{} 
    Let $\model\in\Model$ and $\model^+\in\lm$ the associated linear parameters from Definition \ref{def5.1}. We say that $\model$ satisfies \Cspecter\ if the spectral radius of the matrix $\mathfrak{H}_{\model^+}$ defined by \eqref{eq4.7} is smaller than one, equivalently $\spr(\mathfrak{H}_{\model^+}) <1$, and if $\Vert h^p_m\VLd{1} < \infty$ for all $m\in\M$ and all $p\in\Pg$. 
\end{assumption}

\begin{remark}
	Since the spectrum of a matrix and its transpose matrix is the same, in assumption \Cspecter\ it is equivalent to require $\spr\big(\mathfrak{H}_{\model^+}^{\intercal}\big) < 1$, which is the condition to have subcritical clusters in \cite{leblanc_sharp_2025}.
\end{remark}

Finally, to obtain exponential moment we also need assumption on the random drifts.

\begin{assumption}[$\subg$]\hypertarget{subg}{}
    Random drifts $\xi$ satisfies assumption \Csubg\ if there exist constants $\mathfrak{e}, \mathfrak{s}>0$ such that for all $(p,k)\in\Q$, for all $t\geq 0$ we have, \[ \E{e^{t\vert \xi^p_k \vert}} \leq e^{\mathfrak{e} t + \mathfrak{s} t^2}.\]
\end{assumption}

Let $F=(F^{\alpha})_{\alpha\in\M\cup\Pg}$ non-negative real functions. Let $n\in\N^*$ and define $\kappa_n(F)$ by
\begin{align*}
	 \kappa_n(F) = \max\Big( \max_{m\in\M} \Vert F^m\VLd{1/n}, \ \max_{p\in\Pg}\Vert F^p\VLd{n_p}\Big).
\end{align*}

\begin{theorem}[Exponential moments for HAR processes]\label{th corps moment exp}
	Let $\model\in\Model$ and random drifts $\xi$. Assume that assumptions \Cspec$[\model]$, \Cspecter$[\model]$ and \Csubg$[\xi]$ hold. Let $X$ the $\Equ{\model,\xi}{t_0}$ from Theorem \ref{th3.9} if $t_0=-\infty$ or from Theorem \ref{th3.4} if $t_0\in\R$. Then there exist $r_0>0$ depending only on $\model$ and $K>0$ depending only on $\model$ and \Csubg$[\xi]$ such that for any $n\in\N^*$,
	\begin{equation}
		\kappa_n(F) \leq r_0 \quad \Longrightarrow \quad \E{e^{\Mass_{F}(X)}} \leq e^{K n^2 \kappa_n(F)}.
	\end{equation}
\end{theorem}

This result is an extension of exponential moments for Hawkes processes, see \cite{leblanc_exponential_2024}. The $n^2$ dependence comes from assumption \Csubg$[\xi]$. The fact that $K$ also depend on \Csubg$[\xi]$ means that $K$ depend on the constants $\mathfrak{e},\mathfrak{s}$ involved in the assumption. Note that if $L\in\N^*$, by taking $F^{m}(t) = r \ind_{[0,L)}(t)$ with $r\leq r_0$, $n=L$ we obtain $\Vert F^m\VLd{1/n} = r\leq r_0$ and $\Mass_{F^m}(N^m) = N^m([0,L))$. 

The question now is about links between spectral conditions in \Cspec$[\model]$ (assumption to have existence of stationary processes) and \Cspecter$[\model]$ (assumption to have subcritical clusters). More precisely the link between $\spr(\Hmod{\model})<1$ and $\spr(\mathfrak{H}_{\model^+})<1$. Both are equivalent in the case of Hawkes processes since in this case $\Hmod{\model}=\mathfrak{H}_{\model^+}=\Hmat^N_N$. For HAR processes it is more complex. Let $\model\in\Model$ such that $\spr(\Hmod{\model})<1$ with the matrix $\Hmod{\model}$ defined in Definition \ref{def3.8} by $\Hmod{\model} = \begin{pmatrix} \Hmat^N_N &  \Hmat^N_W \\[4pt] \Hmat^W_N &  \Hmat^W_W\end{pmatrix}$. The spectral part of assumption \Cspecter$[\model]$ is $\spr(\mathfrak{\Hmod{\model^+}})<1$ with $\mathfrak{H}_{\model^+}$ defined by \eqref{eq4.7}. The bound \eqref{eq4.5} given in Theorem \ref{th4.5} implies that
\begin{equation}\label{eq5.1}
\mathfrak{H}_{\model^+} \preceq \Hmat^N_N + \tilde{\Hmat}^N_W (\Id-\Hmat^W_W)^{-1} \tilde{\Hmat}^W_N,
\end{equation}
where $\tilde{\Hmat}^N_W = \Big(n_p L_m \Vert h^m_p\Vert_1\Big)_{m,p}$ and $\tilde{\Hmat}^W_N = \Big( L_p \Vert h^p_m\VLd{1}\Big)_{p,m}$. The difference between $(\tilde{\Hmat}^N_W,  \tilde{\Hmat}^W_N)$ and $(\Hmat^N_W, \Hmat^W_N)$ is that when the classic $\Lf^1$ norm is used on one side, the discrete $\Ld{\cdot}$ norm is used on the other side. Indeed,
\[\Hmat^N_W = \Big(L_m \Vert h^m_p\VLd{n_p}\Big)_{m,p} \quad \text{and} \quad \Hmat^W_N = \Big( L_p \Vert h^p_m\Vert_1\Big)_{p,m}.\]
This is due to the fact that we are integrating in the past (between $-\infty$ and $t$) to establish the existence of the stationary HAR processes (leading to $\Hmod{\model}$) while we are integrating in the future (between $s$ and $\infty$) to describe clusters (leading to $\mathfrak{H}_{\model^+}$).\\
On the one hand, $\tilde{\Hmat}^N_W$ and $\Hmat^N_W$ compare in the right sense: $\tilde{\Hmat}^N_W \preceq \Hmat^N_W$ since $n_p \Vert \cdot \Vert_1 \leq \Vert \cdot \VLd{n_p}$ but on the other hand, we have $\tilde{\Hmat}^W_N \succeq \Hmat^W_N$, the wrong sense, since $ \Vert \cdot \Vert_1 \leq \Vert \cdot \VLd{1}$. Thus we cannot conclude on $\text{\Cspec}[\model] \Rightarrow \text{\Cspecter} [\model^+]$. However, we are able to prove this statement.

\begin{proposition}\label{prop5.6}
    Let $\model\in\Model$. Then we have
    \[\spr\left(\tilde{\Hmat}_{\model} := \begin{pmatrix} \Hmat^N_N &  \Hmat^N_W \\[4pt] \tilde{\Hmat}^W_N &  \Hmat^W_W\end{pmatrix}\right) < 1 \ \ \Longrightarrow \ \ \spr(\Hmod{\model})<1 \ \ \text{of \Cspec}[\model] \ \ \text{and} \ \ \spr(\mathfrak{H}_{\model^+})<1 \ \ \text{of \Cspecter} [\model].\]
\end{proposition}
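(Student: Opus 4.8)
The plan is to rely on two elementary facts about matrices with non-negative entries. First, the spectral radius is monotone: $0 \preceq A \preceq B$ entrywise implies $\spr(A) \le \spr(B)$ (Perron--Frobenius). Second, for such a matrix $M$ one has $\spr(M) < 1$ if and only if there exists a vector $v \succ 0$ (strict entrywise) with $Mv \prec v$: if $Mv \prec v$ with $v \succ 0$, then $Mv \preceq \rho v$ for $\rho := \max_i (Mv)_i/v_i < 1$, so $M^k v \preceq \rho^k v \to 0$, hence $M^k \to 0$ entrywise and $\spr(M) < 1$; conversely, $v := (\Id - M)^{-1}\ones = \sum_{k \ge 0} M^k \ones \succeq \ones \succ 0$ satisfies $Mv = v - \ones \prec v$.

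The assertion \Cspec$[\model]$ is immediate. Since $\Vert\cdot\Vert_1 \le \Vert\cdot\Vert^{\infty}_{1,1}$, we have $H^W_S \preceq \tilde{H}^W_S$ entrywise, hence $H_{\model} \preceq \tilde{H}_{\model}$, and monotonicity gives $\spr(H_{\model}) \le \spr(\tilde{H}_{\model}) < 1$.

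For \Cspecter$[\model^+]$, let $v = (v_S, v_W) \succ 0$, written in block form along $\M \cup \Pg$, be a subinvariant vector of $\tilde{H}_{\model}$ provided by the second fact, so that
\begin{align*}
& H^S_S v_S + H^S_W v_W \prec v_S,\\
& \tilde{H}^W_S v_S + H^W_W v_W \prec v_W.
\end{align*}
Dropping the non-negative term $\tilde{H}^W_S v_S$ in the second line gives $H^W_W v_W \prec v_W$, so $\spr(H^W_W) < 1$, i.e.\ \Cspecbis$[\model]$ holds --- which is exactly what makes the bound \eqref{eq5.1} available --- and $(\Id - H^W_W)^{-1} = \sum_{k \ge 0} (H^W_W)^k \succeq \Id$. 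Rearranging the second line, $\tilde{H}^W_S v_S \prec (\Id - H^W_W)v_W$; applying $(\Id - H^W_W)^{-1}$ and using $(\Id - H^W_W)^{-1} \succeq \Id$ to keep the inequality strict gives $(\Id - H^W_W)^{-1}\tilde{H}^W_S v_S \prec v_W$. Multiplying by $H^S_W \succeq 0$ and substituting into the first line,
\[ \big(H^S_S + H^S_W (\Id - H^W_W)^{-1}\tilde{H}^W_S\big)\, v_S \preceq H^S_S v_S + H^S_W v_W \prec v_S, \]
so $N := H^S_S + H^S_W(\Id - H^W_W)^{-1}\tilde{H}^W_S$ satisfies $\spr(N) < 1$ by the second fact again. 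Finally, $\tilde{H}^S_W \preceq H^S_W$ (because $n_p\Vert\cdot\Vert_1 \le \Vert\cdot\Vert^{\infty}_{1,n_p}$), so \eqref{eq5.1} yields $\mathfrak{H}_{\model^+} \preceq H^S_S + \tilde{H}^S_W(\Id - H^W_W)^{-1}\tilde{H}^W_S \preceq N$, whence $\spr(\mathfrak{H}_{\model^+}) \le \spr(N) < 1$ by monotonicity; moreover the norm condition in \Cspecter$[\model^+]$ holds since for $\model^+$ the relevant quantity is $\Vert g^p_m\Vert^{\infty}_{1,1} = L_p\Vert h^p_m\Vert^{\infty}_{1,1} = (\tilde{H}^W_S)_{p,m} < \infty$, an entry of the finite matrix $\tilde{H}_{\model}$. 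This establishes \Cspecter$[\model^+]$.

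The step needing the most care is keeping the entrywise inequalities \emph{strict} through the matrix multiplications: strictness survives for free only when multiplying by a matrix dominating the identity, which is why the strict inequality is pushed through $(\Id - H^W_W)^{-1} \succeq \Id$ before it is relaxed to $\preceq$ upon multiplication by the possibly row-degenerate $H^S_W$, the final $\prec v_S$ coming solely from the first block inequality. A minor bookkeeping point is that $H^S_S$, $H^S_W$, $H^W_W$ and $\tilde{H}^W_S$ coincide whether built from $\model$ or from $\model^+$ (since $\model^+$ has identity link functions and $g^{\alpha}_{\beta} = L_{\alpha}|h^{\alpha}_{\beta}|$, by Definition \ref{def5.1}), so that \eqref{eq5.1} can be used verbatim. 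Alternatively, the implication $\spr(\tilde{H}_{\model}) < 1 \Rightarrow \spr(N) < 1$ is precisely the Schur-complement criterion for non-negative block matrices, and could be invoked directly in place of the explicit vector computation above.
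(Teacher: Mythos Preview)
Your proof is correct and follows the same overall plan as the paper: deduce \Cspec$[\model]$ by entrywise monotonicity from $H_{\model}\preceq\tilde H_{\model}$, and deduce \Cspecter$[\model^+]$ by combining the bound \eqref{eq5.1} with the Schur-complement fact that $\spr\begin{psmallmatrix}A&B\\ C&D\end{psmallmatrix}<1$ forces $\spr\big(A+B(\Id-D)^{-1}C\big)<1$ for non-negative block matrices.

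Where you differ is in the proof of that Schur-complement fact. The paper argues via power-series tails: it shows the entrywise domination
\[
\sum_{j\geq i}\big(A+B(\Id-D)^{-1}C\big)^j \ \preceq\ \begin{pmatrix}\Id&0\end{pmatrix}\Big(\sum_{j\geq i}N^j\Big)\begin{pmatrix}\Id\\0\end{pmatrix}
\]
and then invokes the characterisation $\spr(R)\leq x\iff \|\sum_{j\geq n}R^j\|=o((x+\varepsilon)^n)$; this yields the slightly sharper conclusion $\spr(\text{Schur})\leq\spr(N)$. You instead use the Collatz--Wielandt-type equivalence ``$\spr(M)<1\iff\exists\,v\succ0,\ Mv\prec v$'' and push a strictly subinvariant vector through the block structure. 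Your route is more elementary and self-contained; the paper's gives a quantitative comparison of the two spectral radii. You also explicitly verify the side condition $\|g^p_m\|^{\infty}_{1,1}<\infty$ in \Cspecter$[\model^+]$ and track carefully where strictness survives (through $(\Id-H^W_W)^{-1}\succeq\Id$) and where it is relaxed (through $H^S_W$), which the paper leaves implicit.
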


\section{Asymptotic stability and ergodic theorem}\label{sec5}

In the study of stochastic processes, understanding whether a process converges to a stationary regime plays a key role. A positive answer     allows us to transfer questions about the non-stationary process into the study of the stationary one. It also opens the door to some creative ideas such as the coupling presented at the end of this section. This coupling ultimately leads to powerful concentration inequalities with numerous applications, such as in statistics.

In this section, given $\delta >0$ and $t\in\R$, notations $\Mass^{\exp}_{\delta,t}$ and $\Massiexp_{\delta,t}$ stand for $\Mass_{F}$ and $\Massi_{F}$, respectively introduced in \eqref{mass F} and \eqref{massi F}, where, 
\begin{equation}\label{notation mass exp}
	F^{\alpha}(s) = e^{-\delta\vert t-s\vert} \quad \text{for all} \quad \alpha\in\M\cup\Pg.
\end{equation} 

\subsection{Rate of convergence to the stationary state}

Our main interest is about the convergence of HAR processes started at some time $t_0\in\R$ to the stationary HAR process. To remain as general as possible, the main theorem of this section states the convergence of two HAR processes with distinct initial conditions towards each other. Then one can specify this result with the empty initial condition and the stationary initial condition to obtain the convergence of the non-stationary process to the stationary one. This is done in Corollary \ref{cor6.4}.

Let us introduce the following assumption.

\begin{assumption}[$\exptail$]\hypertarget{exptail}{}
    Parameters $\model\in\Model$ satisfy assumption \Cexptail\ if there exist constants $K,c>0$ such that for all $\alpha,\beta\in\M\cup\Pg$, all $p\in\Pg$, and all $x > 0$, 
    \[ \vert h^{\alpha}_{\beta}(x) \vert  \leq K e^{-cx} \quad \text{and} \quad \vert b^{\alpha}_{p}(x) \vert  \leq K e^{-cx}.\]
\end{assumption}
This tail assumption is the starting point to derive rate of convergence of two HAR processes with different initial conditions towards each other. This tail assumption transfers to coefficients and clusters functions introduced in Section \ref{sec4} as explained in the following result.

\begin{proposition}\label{prop6.1}
    Let parameters $\model\in\lm$, suppose that \Cspecbis$[\model]$ and \Cexptail$[\model]$ hold. Then there exist constants $K,c>0$ such that 
    \[ \coef{p,k}{p',k'}{\model} \leq K e^{-c (k-k')},  \quad \forall (p,k),(p',k')\in\Q \ \text{with}  \ k'\leq k,\]
    \begin{flalign*}
    	\text{and} \hspace{0.28\linewidth} \mathfrak{h}^m_{m'}(s,t) \leq K e^{-c(t-s)}, \quad \forall m,m'\in\M \ \text{and} \ s<t. &&
    \end{flalign*}
\end{proposition}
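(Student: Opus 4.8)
The plan is to first establish the exponential bound on the coefficients $\coef{p,k}{p',k'}{\model}$ by exploiting their recursive structure from Lemma \ref{lem4.3}, and then to deduce the bound on the cluster functions $\mathfrak{h}^m_{m'}$ from formula \eqref{eq4.3} together with the coefficient bound. The key observation for the coefficients is that, under \Cspecbis$[\model]$, we know from Lemma \ref{lemB.1} in Appendix \ref{appendixB} that the total mass $\sum_{p'} \itg \coef{p,k}{p',k'}{\model}\, d\mp{p'}{k'}$ is uniformly bounded, say by a constant $M_0 < \infty$ related to $(\Id-H^W_W)^{-1}$. The exponential tail then comes from combining this mass bound with the exponential decay of the generating functions $a^p_{p'}h^p_{p'}$.

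First I would prove the coefficient bound. One clean route is to use the closed form from Proposition \ref{prop4.4}: $\coef{p,k}{p',k'}{\model} = \sum_{\gamma:(p',k')\to(p,k)} Y_{\model}(\gamma)$, where each path contributes a product $\prod_{i} a^{p_{i+1}}_{p_i} h^{p_{i+1}}_{p_i}(k_{i+1}-k_i)$. Under \Cexptail, each factor $|h^{p_{i+1}}_{p_i}(k_{i+1}-k_i)| \leq K e^{-c(k_{i+1}-k_i)}$, so writing $c = c'+c''$ with $c',c''>0$, we can peel off a factor $e^{-c'(k-k')}$ using $\sum_i (k_{i+1}-k_i) = k-k'$, leaving $\prod_i \big(a^{p_{i+1}}_{p_i} K^{c''/c} e^{-c''(k_{i+1}-k_i)}\big)$ — wait, more carefully: I would factor $e^{-c(k_{i+1}-k_i)} = e^{-c'(k_{i+1}-k_i)} e^{-c''(k_{i+1}-k_i)}$ in each term, pull the product of the $e^{-c'(k_{i+1}-k_i)}$ factors out as $e^{-c'(k-k')}$, and recognize that the remaining sum over paths with the modified kernels $\tilde h^{p}_{p'}(u) := a^{p}_{p'} K^{?} e^{-c'' u}\cdot(\text{const})$ is exactly a coefficient $\coef{p,k}{p',k'}{\tilde\model}$ for a model $\tilde\model$ whose $H^W_W$-matrix is still sub-critical provided $c''$ is chosen small enough (so that $\spr$ of the modified matrix, which depends continuously on $c''$ and tends to $\spr(H^W_W\text{-like matrix})<1$ as $c''\to 0$, remains below $1$). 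Applying the mass bound of Lemma \ref{lemB.1} to $\tilde\model$ then gives $\coef{p,k}{p',k'}{\model} \leq e^{-c'(k-k')}\cdot \coef{p,k}{p',k'}{\tilde\model} \leq M_0(\tilde\model)\, e^{-c'(k-k')}$, which is the desired bound with the new rate $c'$.

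For the cluster functions, I would plug the just-established bound $\coef{p,k}{p',k'}{\model} \leq K e^{-c(k-k')}$ and \Cexptail\ into \eqref{eq4.3}. The first term $h^m_{m_0}(t-s)$ is directly bounded by $K e^{-c(t-s)}$. For the double-integral term, using $h^m_p(t-k) \leq K e^{-c(t-k)}$, $\coef{p,k}{p',k'}{\model} \leq K e^{-c(k-k')}$, and $h^{p'}_{m_0}(k'-s)\leq K e^{-c(k'-s)}$, the integrand is bounded by $K^3 a^m_p e^{-c(t-s)}$ (since $(t-k)+(k-k')+(k'-s) = t-s$); then extracting a small exponential slack $e^{-\varepsilon(t-s)}$ and integrating the remaining decaying exponentials over $s^+ < k' < k < t$ on the lattices $\D_{p'}, \D_p$ yields a finite constant times $e^{-(c-\varepsilon)(t-s)}$, using that $\sum_{p,p'}$ is a finite sum. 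This gives $\mathfrak{h}^m_{m'}(s,t) \leq K e^{-c(t-s)}$ after renaming constants. The main obstacle is the bookkeeping in the first part: making the path-sum/kernel-modification argument rigorous while verifying that the perturbed spectral radius stays below $1$ — alternatively one can bypass the closed form and argue directly by a Grönwall/Picard-type induction on the recursion in point (3) of Lemma \ref{lem4.3}, bounding $\coef{p,k}{p',k'}{\model} e^{c'(k-k')}$ and closing the estimate via the sub-criticality of the modified mass matrix; either way the crux is controlling the interplay between the exponential slack and the sub-critical spectral radius.
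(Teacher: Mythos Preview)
Your primary route for the coefficient bound has a genuine gap. Once you upper-bound $h^{p}_{p'}(u)\le K e^{-cu}$ and then split $c=c'+c''$, the ``modified model'' $\tilde\model$ you obtain has kernels $\tilde h^{p}_{p'}(u)=K e^{-c''u}$ that no longer carry any information about the original $h^{p}_{p'}$. Its $H^W_W$-matrix has entries $a^{p}_{p'}\,\|K e^{-c''\,\cdot}\|^{\infty}_{1,n_{p'}}\asymp K n_{p'}/c''$, which blow up as $c''\to 0$; they do \emph{not} converge to the entries of the original $H^W_W$. So the continuity claim ``$\spr$ of the modified matrix $\to \spr(H^W_W)<1$ as $c''\to 0$'' is false, and \Cspecbis$[\model]$ gives you nothing about $\tilde\model$. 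The pre-bounding step destroys exactly the structure you need.

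The alternative you mention at the end is the correct one and is precisely what the paper does. Do not bound $h$ first; instead define $\model_{c'}$ by replacing each $h^{p}_{p'}(u)$ with $e^{c'u}h^{p}_{p'}(u)$. The closed form of Proposition~\ref{prop4.4} then gives the clean identity
\[
\coef{p,k}{p',k'}{\model_{c'}} \;=\; e^{c'(k-k')}\,\coef{p,k}{p',k'}{\model},
\]
because each edge factor in a path picks up $e^{c'(k_{i+1}-k_i)}$ and these telescope. Under \Cexptail$[\model]$ the map $c'\mapsto \|e^{c'\,\cdot}h^{p}_{p'}\|^{\infty}_{1,n_{p'}}$ is finite and continuous on a neighbourhood of $0$, and equals $\|h^{p}_{p'}\|^{\infty}_{1,n_{p'}}$ at $c'=0$; by continuity of the spectral radius, \Cspecbis$[\model_{c'}]$ holds for $c'>0$ small enough. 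Lemma~\ref{lemB.1} (point~1) then bounds $\coef{p,k}{p',k'}{\model_{c'}}$ uniformly, and the identity above yields $\coef{p,k}{p',k'}{\model}\le K e^{-c'(k-k')}$.

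Your argument for the cluster functions $\mathfrak{h}^m_{m'}$ is correct and matches the paper: plug the coefficient bound and \Cexptail\ into \eqref{eq4.3}, telescope the exponents $(t-k)+(k-k')+(k'-s)=t-s$, and absorb the polynomial factor coming from the lattice integration (at most of order $(t-s)^2$) by slightly lowering the rate $c$.
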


The corner stone of this section is having precise control on the tail of clusters. Assumption \Cexptail$[\model]$ on the tails of the interaction functions leads to the following lemma, which is a specific case of Theorem 4.1 from \cite{leblanc_sharp_2025}. As for point processes, for any cluster $G_s^m$ born at time $s$ with a root of type $m$, for any $I\subset \R$, we denote by $G^m_s(I)$ the total number of individuals of $G^m_s$ whose birth dates are in $I$.

\begin{lemma}\label{lem6.2}
    Let $\model\in\lm$ and suppose that assumptions \Cexptail$[\model]$ and \Cspecter$[\model]$ hold. Let $G^m_{t_0}$ be a $\Pois(\bm{\mathfrak{h}}^{\intercal})$-cluster with root of type $m\in\M$ and born at time $t_0\in\R$. Then, there exists $r > 0$, depending on $\model$, such that, for all $t\geq 0$,
    \[\E{e^{r G^m_{t_0}([t_0+t,\infty))}} \leq 1 + K \exp(-ct),\]
    for constants $c>0, \ K>0$ depending on $\model$ but not on $m$ nor $t_0$.
\end{lemma}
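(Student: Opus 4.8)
I would obtain Lemma~\ref{lem6.2} as a special case of Theorem~4.1 of \cite{leblanc_sharp_2025}: that theorem bounds, for a subcritical multitype cluster generated by bivariate kernels, the exponential moment of the number of individuals born after a prescribed lag by an expression of the form $1 + Ke^{-ct}$, provided (i) the clusters are subcritical, i.e. the offspring matrix has spectral radius strictly less than $1$, and (ii) the generating kernels decay exponentially, uniformly in the base time. So the proof reduces to checking (i) and (ii) in our setting and to verifying that the constants $\theta_0, c, K$ can be chosen independently of the root type $m$ and of the birth time $t_0$.

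\textbf{Checking the hypotheses.} For (i): by Section~\ref{sec4.3}, $G^m_{t_0}$ is a $\Pois(\bm{\mathfrak{h}}^{\intercal})$ cluster whose offspring is dominated by a $\Pois(\mathfrak{H}_{\model}^{\intercal})$ Galton--Watson process, and assumption \Cspecter$[\model]$ is precisely $\spr(\mathfrak{H}_{\model}) = \spr(\mathfrak{H}_{\model}^{\intercal}) < 1$, the subcriticality condition of \cite{leblanc_sharp_2025}; it also fixes, via Theorem~3.1 of \cite{leblanc_sharp_2025}, a range of $\theta_0 > 0$ for which $\E{e^{\theta_0 G^m_{t_0}(\R)}} < \infty$ uniformly in $m$ and $t_0$. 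For (ii): I would invoke Proposition~\ref{prop6.1} (whose spectral hypothesis \Cspecbis$[\model]$ is part of the ambient assumptions), which gives $\mathfrak{h}^m_{m'}(s,t) \leq Ke^{-c(t-s)}$ for all $m,m'\in\M$ and $s<t$, with $K,c$ independent of $s$. The uniformity in the base time $s$ is the crucial point: it is exactly the form of kernel decay required in \cite{leblanc_sharp_2025}, it is consistent with the supremum over $s$ appearing in the definition \eqref{eq4.7} of $\mathfrak{H}_{\model}$, and, combined with the $1$-periodicity $\mathfrak{h}^m_{m'}(s,t) = \mathfrak{h}^m_{m'}(s+1,t+1)$, it is what forces all constants below to be independent of $t_0$.

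\textbf{Conclusion and main obstacle.} Granting (i) and (ii), Theorem~4.1 of \cite{leblanc_sharp_2025} applied to $G^m_{t_0}$ produces $\theta_0 > 0$ and $c, K > 0$ depending on $\model$ only (through $\mathfrak{H}_{\model}$ and the pair $(K,c)$ of \Cexptail), such that $\E{e^{\theta_0 G^m_{t_0}([t_0+t,\infty))}} \leq 1 + Ke^{-ct}$ for all $t \geq 0$, uniformly in $m$ and $t_0$. The shape $1 + Ke^{-ct}$ is natural: by subcriticality the cluster has, with probability close to one, no individual born after $t_0 + t$, so the exponential moment is close to $1$; producing such a late individual forces a chain of interactions spanning a time interval of length at least $t$, an event of exponentially small probability in $t$ by \Cexptail, which the subcritical branching cannot offset. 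I do not expect a new probabilistic difficulty here, since the result is already available in \cite{leblanc_sharp_2025}; the only genuine work is bookkeeping — translating notation and checking that the uniformity in $s$ of Proposition~\ref{prop6.1} propagates to uniformity in $t_0$. If instead one wanted a self-contained argument, the main obstacle would be closing the recursive Laplace-functional inequality for $\psi^m_s(t) := \E{e^{\theta_0 G^m_s([s+t,\infty))}}$ despite $\mathfrak{H}_{\model}^{\intercal}$ possibly having operator norm $\geq 1$; the standard remedy, as in \cite{leblanc_sharp_2025}, is to run the recursion in a weighted norm in which $\mathfrak{H}_{\model}^{\intercal}$ is a strict contraction, such a norm existing because $\spr(\mathfrak{H}_{\model}^{\intercal}) < 1$.
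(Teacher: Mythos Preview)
Your proposal is correct and matches the paper's own treatment: the paper does not give a standalone proof of Lemma~\ref{lem6.2} but states upfront that it ``is a specific case of Theorem~4.1 from \cite{leblanc_sharp_2025}'', exactly as you propose. Your elaboration on how to verify the hypotheses (subcriticality via \Cspecter, exponential tail of the cluster functions via Proposition~\ref{prop6.1}, and uniformity in $t_0$ via $1$-periodicity) is a faithful expansion of that one-line citation.

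One small caveat: you write that the hypothesis \Cspecbis$[\model]$ of Proposition~\ref{prop6.1} ``is part of the ambient assumptions'', but Lemma~\ref{lem6.2} as stated only lists \Cexptail$[\model]$ and \Cspecter$[\model]$, not \Cspecbis$[\model]$. In the paper's actual use of the lemma (inside the proof of Theorem~\ref{th6.3}, applied to $\model^+$), \Cspecbis\ does hold, so the argument goes through in context; but strictly at the level of the lemma statement, the exponential decay of $\mathfrak{h}^m_{m'}(s,t)$ via Proposition~\ref{prop6.1} requires this extra spectral input on $H^W_W$, which is not formally implied by \Cspecter\ alone. This is a bookkeeping point about hypotheses, not a gap in your strategy.
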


Given two initial conditions $\mathfrak{C}_1$ and $\mathfrak{C}_2$ at time $t_0$, and $\delta >0$, the quantity $\Massiexp_{\delta,t_0}(\mathfrak{C}_1 \circleddash \mathfrak{C}_2)$, introduced in \eqref{notation mass exp}, gives a measure of how different $\mathfrak{C}_1$ and $\mathfrak{C}_2$ are with a memory vanishing exponentially fast at rate $\delta$.

The following theorem is the main result of this section.

\begin{theorem}[Asymptotic limit]\label{th6.3}
    Let $\model\in\Model$, random drifts $\xi$ and suppose that assumptions \Cnoexpl$[\model]$, \Cspecbis$[\model]$, \Cexptail$[\model]$ and \Cspecter$[\model]$ hold. Let $t_0\in\R$ and two integrable initial conditions, $\mathfrak{C}_1$ and $\mathfrak{C}_2$, at time $t_0$ for $\model$. For $i=1,2$, we denote by $X^{t_0,\mathfrak{C}_i}$ the unique non-explosive $\Equ{\model,\xi}{t_0,\mathfrak{C}_i}$ process from Theorem \ref{th3.4}. There exists $\delta>0$ depending only on $\model$ such that the following holds.
    \allowdisplaybreaks
    \begin{itemize}
        \item There exist $K,c>0$ depending only on $\model$ such that for all $t\geq 0$,
        \[\P{\Mass_{[t_0+t,\infty)}\Big(X^{t_0,\mathfrak{C}_1} \circleddash X^{t_0,\mathfrak{C}_2}\Big)\leq e^{-ct}} \geq 1-K \E{\Massiexp_{\delta,t_0}(\mathfrak{C}_1 \circleddash \mathfrak{C}_2)}e^{-ct}.\]
        \item The following convergences hold a.s. and in $\Lrv^q$ for $q\geq 1$ such that $\Mass_{\exp(\delta)}(\mathfrak{C}_1 \circleddash \mathfrak{C}_2) \in \Lrv^{q_0}$ for some $q_0>q$;
        \[\Mass_{[t_0+t,\infty)}\Big(X^{t_0,\mathfrak{C}_1} \circleddash X^{t_0,\mathfrak{C}_2}\Big) \xrightarrow[t\to\infty]{} 0.\]
        Convergence rates are at least $Ke^{-ct}$ with $K,c>0$ depending on $\model$, $q$, $q_0$ and $\E{ \Massiexp_{\delta,t_0}(\mathfrak{C}_1 \circleddash \mathfrak{C}_2)^{q_0}}$ for $K$.
    \end{itemize}
\end{theorem}

We obtained convergence, almost surely, in $L^q$ and in probability, of $\Equ{\model}{t_0,\mathfrak{C}_i}$, $i=1,2$ towards each other when time goes to infinity with explicit convergence rates. This shows that the asymptotic behavior does not depend on the initial condition, and thus HAR processes are decorrelated at infinity, which means, at least informally, that correlations between $X\big\vert_{(-\infty,s]}$ and $X\big\vert_{[t,\infty)}$ vanish as $t-s \to \infty$.

One can specify the result by taking for the first initial condition the empty initial condition and for the second one, the initial condition which is equal to the stationary HAR process. This allows to recreate the stationary HAR process through a HAR process with this particular initial condition. These two clever choices lead to the following result. 

\begin{corollary}[Convergence to the 1-stationary HAR process]\label{cor6.4}
    Let $\model\in\Model$, random drifts $\xi$, and suppose that assumptions \Cspec$[\model]$, \Cfime$[\xi]$, \Cexptail$[\model]$ and \Cspecter$[\model^+]$ hold. Let $t_0\in\R$. We denote by $X^{t_0}$ the $\Equ{\model,\xi}{t_0}$ process from Theorem \ref{th3.4} and $X^{-\infty}$ the $\Equ{\model,\xi}{-\infty}$ process from Theorem \ref{th3.9}. Then the following holds.
    \begin{itemize}
        \item There exist $K,c>0$ depending on $\model$ and \Cfime$[\xi]$ such that for all $t\geq 0$, 
        \[\P{\Mass_{[t_0+t,\infty)}\Big(X^{t_0} \circleddash X^{-\infty}\Big)\leq e^{-ct}} \geq 1-Ke^{-ct},\]
        \item The following convergences hold a.s. and, under assumption \Csubg$[\xi]$, in $\Lrv^q$ for all $q\geq 1$
        \[\Mass_{[t_0+t,\infty)}\Big(X^{t_0} \circleddash X^{-\infty}\Big) \xrightarrow[t\to\infty]{} 0.\]
        Convergence rates are at least $Ke^{-ct}$ with $K,c>0$ depending on $\model$, $q$ and \Csubg$[\xi]$.
    \end{itemize}
\end{corollary}

This result links two of the most natural HAR processes: the stationary process and the process started at a fixed time $t_0$. They converge towards each other with exponential rates.

\subsection{Ergodic theorem for HAR processes}

In this final section, we investigate the asymptotic behavior of functionals of stationary HAR processes. More precisely, we establish an ergodic theorem that connects time averages along a trajectory to expectations under the stationary distribution.

Recall notation \eqref{notation mass exp}. Let $\f$ be a functional on $(\M,\Pg)$-processes such that there exist $\delta, C_{\f},a >0$ such that for any $X,\tilde{X}$ we have
\begin{equation}\label{fonctionnelle}
	\begin{aligned}
		& \vert \f(X) \vert \leq C_{\f} \Big( 1 + \Mass^{\exp}_{\delta,0}\big(X\big)\Big)^{a},\\
		& \vert \f(X) - \f(\tilde{X}) \vert \leq C_{\f} \Big( 1 + \Mass^{\exp}_{\delta,0}(X) + \Mass^{\exp}_{\delta,0}(\tilde{X})\Big)^{a} \Big(\Mass^{\exp}_{\delta,0}\big( X \circleddash \tilde{X}\big)\Big)^{a}.
	\end{aligned}
\end{equation}

For any $t\in\R$ the shift by $t$ operator, denoted $\theta_t$, on $(\M,\Pg)$-processes $X=(N,W)$ is defined by $\theta_t N^m = N^m+t$ and $\theta_t W^p_k = W^p_{k+t}$ for any $p\in\Pg$, $k\in\D_p-t$.

\begin{theorem}[Ergodic Theorem for HAR processes]\label{th ergo}
	Let $\model\in\Model$, random drifts $\xi$, and assume \Cspec$[\model]$, \Cspecter$[\model]$, \Csubg$[\xi]$ and \Cexptail$[\model]$. Let $X$ the 1-stationary $\Equ{\model,\xi}{-\infty}$ process. The following holds:
	\begin{equation}
		\frac{1}{T}\itg_0^T \f(\theta_t X) dt \xrightarrow[T\to\infty]{a.s.} \E{\itg_0^{1} \f(\theta_t X) dt}.
	\end{equation}
	More precisely, for any $n,\tau,r$ such that $T=4n\tau + r$, $r<4\tau$, for any $q > 2$ and $x>0$, there exists constants $c,C,C_q >0$ depending on $\model$, \Csubg$[\xi]$, $\delta,C_{\f},a$ and also $q$ for $C_q$ such that
	\begin{equation}\label{non asymp th ergo}
		\P{ \Big\vert\frac{1}{T}\itg_0^T \f(\theta_t X) dt - \itg_0^{1} \E{\f(\theta_t X)} dt\Big\vert > x+C e^{-c \tau}} \leq C Te^{-c \tau} + \frac{C_q}{n^{q-1}}\big( \frac{1}{x^q} + \frac{1}{x^{2(q-1)}}\big).
	\end{equation}
\end{theorem}

\begin{remark}
This theorem provides a quantitative version of the ergodic theorem for HAR processes under fairly general assumptions. In addition to almost sure convergence, it also provides explicit non-asymptotic deviation bounds of time averages toward their stationary expectation. Theorem \ref{th6.3} proves that memory of HAR processes fades exponentially fast. To make use of this "weak independence" in the proof of \eqref{non asymp th ergo}, we construct a coupling \textit{à la Berbee} \cite{berbee_random_1979} and use techniques similar to what have been used for stationary time series analysis, see \cite{viennet_inequalities_1997,rio_functional_1995,doukhan_invariance_1995}. Equation \eqref{non asymp th ergo} highlights a trade-off between a block length $\tau$ and the corresponding number of blocks $n$, reflecting the interplay between vanishing dependence (controlled via $\tau$ and Corollary \ref{cor6.4}) and concentration (through $n$). Such bounds are especially useful for statistical applications, where conditions \eqref{fonctionnelle} are usually satisfied.
\end{remark}

\begin{remark}
	Ergodicity for Hawkes processes has been extensively studied in the literature. Clinet and Yoshida \cite{clinet_statistical_2017} established ergodicity for the exponential Hawkes processes (thus linear), which is also the framework of \cite{abergel_long-time_2015}. In \cite{kwan_ergodic_2025}, the assumptions are minimal: the authors prove ergodicity of the intensity of a univariate linear subcritical Hawkes process without additional conditions. Hansen et al. \cite{hansen_lasso_2015} obtained a non-asymptotic ergodic theorem for linear multivariate Hawkes processes in the case of interactions and functionals with finite memory. Our result is also non-asymptotic and extends both the exponential and finite-memory settings to the broader class of non-linear HAR processes with sub-exponential interaction functions. In contrast to \cite{hansen_lasso_2015} we allow the functional to depend on the whole process, provided that its memory decays exponentially fast at $\pm\infty$. In particular, this result applies to the stochastic intensity which can be viewed as a functional satisfying conditions \eqref{fonctionnelle} under assumption \Cexptail$[\model]$. It yields an ergodic theorem for any function $f(\lambda_t^1, \dots, \lambda_t^M)$ such that, for all $x,y \in \R^{\M}$, $\vert f(x)\vert \leq C (1+\vert x\vert_1)^{a}$ and $\vert f(x)-f(y)\vert \leq C (1+\vert x\vert_1+\vert y\vert_1)^{a} \vert x-y\vert^{a}$.
\end{remark}

\section{Proof of results of Section \ref{sec3}}\label{sec6}

\subsection{Proof of Theorem \ref{th3.4}}

Before diving into the proof, let us make some preliminary calculations on the integrable initial condition (see Definition \ref{def3.2}). Let $\mathfrak{C}$ an integrable initial condition, $m\in\M$ and $f\in \Lf^1$ non-negative. Recall that $\mathfrak{C}^m = \mathfrak{C}^m_{reg}\cup \mathfrak{C}^m_{disc}$ with $\card(\mathfrak{C}^m_{disc}) < \infty$ a.s. and that $\mathfrak{C}^m_{reg}$ has a mean intensity measure dominated by $K$ times the Lebesgue measure. Let $t\in\R$, we thus have $\int_{-\infty}^{t-} f(t-s)d(\mathfrak{C}^m_{disc})_s < \infty$ a.s. and $\E{\int_{-\infty}^{t-} f(t-s)d(\mathfrak{C}^m_{reg})_s} \leq K \Vert f\Vert_1 < \infty$. Therefore we have
\begin{equation}\label{CI fini 1}
	\forall f\in \Lf^1, \ \forall t\in \R, \ a.s. \quad \itg_{-\infty}^{t-} f(t-s)d\mathfrak{C}^m_s <\infty.
\end{equation}
Similarly, again for $f\in \Lf^1$ non-negative and for $a<b$ we have by Fubini $\int_a^b\int_{-\infty}^{t-} f(t-s)d(\mathfrak{C}^m_{disc})_s \leq \Vert f\Vert_1 \card(\mathfrak{C}^m_{disc})< \infty$ a.s. and $\E{\int_a^b\int_{-\infty}^{t-} f(t-s)d(\mathfrak{C}^m_{reg})_s dt} \leq (b-a)K \Vert f\Vert_1 < \infty$. Thus we have
\begin{equation}\label{CI fini 2}
	\forall f\in \Lf^1, \ \forall a<b\in \R, \ a.s. \quad \int_a^b\itg_{-\infty}^{t-} f(t-s)d\mathfrak{C}^m_s <\infty.
\end{equation}
Let $p\in\Pg$ and recall that $\mathfrak{C}^p = \mathfrak{C}^p_{reg}+\mathfrak{C}^p_{disc}$ with $\Massi_{(-\infty,t_0)}(\mathfrak{C}^p_{disc}) < \infty$ a.s., which means that a.s. only a finite number values of $\mathfrak{C}^p_{disc}$ are different from $\varnothing$. Also $\mathfrak{C}^p_{reg}$ satisfies $\E{\Massi_{B}(\mathfrak{C}^p_{reg})} \leq K \vert B \vert_{\Pg}$ for all Borel sets $B$, meaning that $\mathfrak{C}^p_{reg,k}$ is uniformly (in $k$) bounded in $\Lrv^1$ since $\vert B \vert_{\Pg} = \sum_{p\in\Pg} \mp{p}{}(B)$. Let $t\in\R$ and $f\in \Lf^1$ non-negative. Recall that $\psi_1$ is defined in \eqref{def psi}. By Fubini we have, $\int_a^b\int_{-\infty}^{t-} f(t-k)\psi_1(\mathfrak{C}^p_{disc,k}) d\mp{p}{k}dt \leq \Vert f\Vert_1 \Massi_{(-\infty,t_0)}(\mathfrak{C}^p_{disc})< \infty$ a.s., and $\E{\int_a^b\int_{-\infty}^{t-} f(t-k)\psi_1(\mathfrak{C}^p_{reg,k})d\mp{p}{k}dt} \leq [n_p(b-a)+1] K \Vert f\Vert_1 < \infty$ with $n_p$ the frequency of $\D_p$. Thus we have,
\begin{equation}\label{CI fini 3}
	\forall f\in \Lf^1, \ \forall a<b\in \R, \ a.s. \quad \int_a^b\itg_{-\infty}^{t-} f(t-k)\psi_1(\mathfrak{C}^p_k) d\mp{p}{k} <\infty.
\end{equation}
Finally, for $f\in \Ld{n_p}$ non-negative and $\ell\in\R$, we have, $\int_{-\infty}^{\ell-}f(\ell-k)\psi_1(\mathfrak{C}^p_{disc,k}) d\mp{p}{k} \leq \Massi_{(-\infty,t_0)}(\mathfrak{C}^p_{disc}) \Vert f\VLd{n_p} < \infty$ a.s., and $\E{\int_{-\infty}^{\ell-}f(\ell-k)\psi_1(\mathfrak{C}^p_{reg,k}) d\mp{p}{k}} \leq K \Vert f\VLd{n_p} <\infty$. Thus we have,
\begin{equation}\label{CI fini 4}
	\forall f\in \Ld{n_p}, \ \forall \ell\in \R, \ a.s. \quad \itg_{-\infty}^{\ell-} f(\ell-k)\psi_1(\mathfrak{C}^p_k) d\mp{p}{k} <\infty.
\end{equation}

We can now start the proof. On $(-\infty,t_0)$ the process $X$ is given by the initial condition. For $t=t_0$ we have $N^m(\{t_0\})=0$ almost surely since $\pi^m(\{t_0\}\times \R_+) = \varnothing$ almost surely. For $p$ such that $t_0\in\D_p$, values $W^p_{t_0}$. By \Cnoexpl$[\model]$ we have $h_m^{p} \in\Lf^1$ and $b_{p'}^{p},h^{p}_{p'} \in\Ld{n_{p'}}$ for $m\in\M,p,p'\in\Pg$, thus equations \eqref{CI fini 1} and \eqref{CI fini 4} combined to $\vert J^{p}_{p'}(s,w) \vert \leq \max(h^{p}_{p'},b^{p}_{p'})(s)\psi_1(w)$, ensure that $\vert W^p_{t_0} \vert < \infty$ almost surely.

Assume now that the process $X$ is constructed on $[t_0,s_0]$ for $s_0 \geq t_0$ and that almost surely $\Mass_{[t_0,s_0]}(X) <\infty$, which is the case for $s_0=t_0$. Let $t^*>0$ small enough (the precise condition is given at the end). Let 
\begin{equation}
	R(s_0)= \min\big(s_0+t^*, \inf\{k>s_0 \mid \exists p\in\Pg, \ k\in \D_p\}\big).
\end{equation}
We will prove that we can construct $X$ on $[t_0,R(s_0)]$ and that $\Mass_{[t_0,R(s_0)]}(X) <\infty$ almost surely. First one can remark that on time window $(s_0,R(s_0)]$ there is no values of any $W^p$, except possibly at $R(s_0)$. Thus we need to construct $N$ on $(s_0,R(s_0)]$ and then, if needed, to calculate values $W^p_{R(s_0)}$ for which $R(s_0)\in\D^p$. 

To do so we define \textit{temporary intensities} at level $s_0$ for $s_0 < t \leq R(s_0)$ defined by
\[ \temp_{s_0}(\lambda^m)_t = \Phi^m\left(\sum_{m'\in\M} \itg_{-\infty}^{s_0} h^{m}_{m'}(t-s)dN^{m'}_s + \sum_{p\in\Pg} \itg_{-\infty}^{s_0} \J^m_p(t-k,W^p_k)d\mp{p}{k}\right).\]
Basically, $\temp_{s_0}(\lambda^m)_t$ is the intensity of $N^m$ at time $t$ if there are no points (from any node) in $(s_0,t)$. Then one have access to the \textit{temporary points at level $s_0$}, being the atoms $(t,x)$ of the Poisson random measures $\pi^m$ such that $s_0< t \leq R(s_0)$ and $0\leq x\leq \temp_{s_0}(\lambda^m)_t$.\\
Considering that $\vert J^m_p(s,w)\vert \leq \max(h^{m}_{p},b^m_{p})(s)\psi_1(w)$, that the functions $h^{m}_{p},b^m_{p},h^m_{m'}$ are $\Lf^1$ and function $\Phi^m$ is Lipschitz by \Cnoexpl$[\model]$, that $\Mass_{[t_0,s_0]}(X) <\infty$ almost surely, and by equations \eqref{CI fini 2} and \eqref{CI fini 3}, we have
\begin{equation}
	\itg_{s_0}^{R(s_0)} \temp_{s_0}(\lambda^m)_t dt < \infty \quad \F_{s_0} \text{ - almost surely.}
\end{equation}
We denote by $E_0$ the set of all temporary points at level $s_0$. We have
\begin{equation}
	\E{ \card(E_0) | \F_{s_0}} = \sum_{m\in\M} \itg_{s_0}^{R(s_0)} \temp_{s_0}(\lambda^m)_t dt,
\end{equation}
and so, $\F_{s_0}$-almost surely, $\card(E_0) <\infty$. If $E_0 = \varnothing$ then $N$ have no points in $(s_0,R(s_0)]$ and $N$ is constructed on this interval. Else, one can find the element of $E_0$ with minimal time coordinate. We denote by $s_1$ this time coordinate. $s_1$ is a true point of its corresponding $N^m$ and $N$ has no other points in $(s_0,s_1]$. We can start again, calculate temporary intensities at level $s_1$ for $s_1< t \leq R(s_0)$ and find $E_1$ the set all temporary points at level $s_1$. The same argument shows that $E_1$ is almost surely finite. If $E_1$ is empty we stop and $N$ is constructed on $(s_1,R(s_0)]$, else we can find $s_2$ the smallest time coordinate in $E_1$, then find $E_2$ etc until at some point $E_k$ is empty for some $k<\infty$. To prove it will happen almost surely, we can look at the sequence $U_k = \card(E_k\setminus(E_0\cup \cdots \cup E_{k-1}))$ for $k\geq 1$. Conditionally on $\F_{s_{k-1}}$, $U_k$ is distributed as $\Pois(\alpha_k)$ where $\alpha_k$, measurable for $\F_{s_{k-1}}$, is given by (see Figure \ref{fig4}),
\begin{align*}
	\alpha_k & = \sum_{m\in\M}\itg_{s_k}^{R(s_0)} \left(\temp_{s_{k}}(\lambda^m)_t-\max_{i=0,\cdots,k-1} \temp_{s_{i}}(\lambda^m)_t\right)_+ dt \\
	& \leq \sum_{m\in\M}\itg_{s_k}^{R(s_0)} \left(\temp_{s_{k}}(\lambda^m)_t- \temp_{s_{k-1}}(\lambda^m)_t\right)_+ dt \\
	& \leq \max_{m'\in\M} \sum_{m\in\M} \itg_{s_k}^{R(s_0)} L_m \vert h^m_{m'}(t-s_k)\vert dt\\
	& \leq  \max_{m'\in\M} \sum_{m\in\M} L_m \Vert h^m_{m'}\Vert_{1,[0,t^*]}.
\end{align*}
Thus if $t^*$ small enough, there exists a constant $\alpha^*<1$ such that $\alpha_k \leq \alpha^*$ for all $k$. Thus $\card\Big(\bigcup_{k\geq 0} E_k\Big)$ is dominated by the cardinal of a $E_0$-forest of subcritical Galton Watson processes (with offspring distribution $\Pois(\alpha^*)$), and since $E_0$ is almost surely finite, it is clear that $\bigcup_{k\geq 0} E_k$ is almost surely finite and we will stop. It follows that $N$ is constructed on $(s_0,R(s_0)]$ and that the number of points of $N$ in $(s_0,R(s_0)]$ is almost surely finite since it is included in $\bigcup_{k\geq 0} E_k$.

\begin{figure}[h]
\begin{center}
\begin{tikzpicture}[scale=0.6]

\draw[-stealth, line width=0.6mm] (0,0)--(0,6);
\draw[-stealth, line width=0.6mm] (0,0)--(10,0);

\draw (10.2,-0.5) node {$t$};
\draw (-0.5,6.3) node {$\R_+$};
\draw (-0.2,-0.2) node {$t_0$};

\draw[red, line width=0.4mm] (0,4) ..controls +(1.5,-0.5) and +(-1,-0.5).. (2,1) ..controls +(1,0.5) and +(-1,1.5).. (4,3);

\draw[red, line width=0.4mm, dotted] (4,3) ..controls +(0.55,-0.7) and +(-0.2,0.2).. (4.5,2.4) ..controls +(0.7,-0.7) and +(-1,-0.2).. (6,2.8) ..controls +(1,0.2) and +(-0.6,-0.3).. (7.8,2.9) ..controls +(0.4,0.2) and +(-0.3,-0.5).. (8.5,3.9);

\draw[blue, line width=0.4mm, dotted] (4.5,2.4) ..controls +(0.5,-0.3) and +(-1,-0.7).. (6,3.4) ..controls +(1,0.7) and +(-0.6,0.45).. (7.8,2.9) ..controls +(0.4,-0.3) and +(-0.4,-0.3).. (8.5,3);

\begin{scope}[transparency group, opacity=0.13]
\fill (4.5,2.4) ..controls +(0.5,-0.3) and +(-1,-0.7).. (6,3.4) ..controls +(1,0.7) and +(-0.6,0.45).. (7.8,2.9) ..controls +(-0.6,-0.3) and +(1,0.2).. (6,2.8) ..controls +(-1,-0.2) and +(0.7,-0.7) .. (4.5,2.4);
\end{scope}

\draw[dotted] (4,0)--(4,3);
\draw[dotted] (8.5,0)--(8.5,3.9);
\draw[dotted] (4.5,0)--(4.5,2.4);
\draw (3.9,-0.5) node {$s_0$};
\draw (4.8,-0.5) node {$s_1$};
\draw (8.5,-0.5) node {$R(s_0)$};

\draw[red] (2,4) node {$\lambda^m_t$};
\draw[red] (8,2) node {$\temp_{s_0}(\lambda^m)_t$};
\draw[blue] (6,4.5) node {$\temp_{s_1}(\lambda^m)_t$};

\tikzstyle{every node}=[circle, draw, fill=green!100!black!30, inner sep=0pt, minimum width=4pt]

\draw (4.5,0.8) node {};
\draw (5.2,2.1) node {};
\draw (5.6,1.95) node {};
\draw (6.7,0.5) node {};
\draw (8.2,3.1) node {};

\tikzstyle{every node}=[circle, draw, fill=RoyalBlue!80, inner sep=0pt, minimum width=4pt]

\draw (7.1,3.3) node {};
    
\end{tikzpicture}
\end{center}
\caption{This figure illustrates the proof in $\pi^m$. In red there is $\lambda^m_t$, dotted in red there is the temporary intensity $\temp_{s_0}(\lambda^m)$. Green dots are the elements of $E_0$ in $\pi^m$. $s_1$ is the first point of $E_0$ (assumed to be also on $\pi^m$ here). Dotted in blue there is the temporary intensity at level $s_1$. The shaded area is where there might be elements of $(E_1\setminus E_0)\cap\pi^m$, here there is one represented by the blue dot.}
\label{fig4}
\end{figure}

It remains to calculate, if $R(s_0)\in\D_p$, values $W^p_{R(s_0)}$. By combining assumption \Cnoexpl$[\model]$ with $\vert J^p_{p'}(s,w)\vert \leq \max(h^{p}_{p'},b^p_{p'})(s)\psi_1(w)$, that $\Mass_{[t_0,s_0]}(X) <\infty$ almost surely, that almost surely $N((s_0,R(s_0)])$ is finite,  and equations \eqref{CI fini 1} and \eqref{CI fini 4}, we have almost surely,
\begin{equation}
	\vert W^p_{R(s_0)} \vert \leq \vert \xi^p_{R(s_0)}\vert + \sum_{m\in\M} \itg_{-\infty}^{R(s_0)-} L_p \vert h^p_m\vert (R(s_0)-s)dN^{m}_s + \sum_{p'\in\Pg} \itg_{-\infty}^{s_0} L_p\vert \J^p_{p'}\vert(R(s_0)-k',W^{p'}_{k'})d\mp{p'}{k'} < \infty.
\end{equation}
Thus $X$ is constructed on $[t_0,R(s_0)]$ and $\Mass_{[t_0,R(s_0)]}(X) < \infty$ almost surely. By induction we can construct the process on $[t_0,\infty)$ since $R^{\circ n}(t_0) \xrightarrow[n\to\infty]{} \infty$.

A quick word about uniqueness. If there is another non-explosive $\Equ{\model,\xi}{t_0,\mathfrak{C}}$ process then it must be the same on $(-\infty,t_0)$ as the one we have just constructed since the initial conditions are the same. Then they must also be the same at $t_0$ by applying the equations at time $t_0$. Then since there is a finite number of points until $R(t_0)$, the temporary intensities must match until the first point after $t_0$, which then has to be the same for both of the processes. By doing so, one can check that this other solution must equal as the one we have just constructed on $[t_0,R(t_0)]$ and so on.

\subsection{Proof of Theorem \ref{th3.9}}

To construct 1-stationary processes we use a Picard iteration whose scheme is defined as follows.
\begin{flalign*}
\text{\underline{Initialisation :}}  \hspace{0.1\linewidth} 
\left\{
\begin{aligned}
    & \lambda^{m,0}_t = \Phi^m(0) \\
    & N^{m,0}(C) = \itg_C \itg_{\R_+} \ind_{x \leq \lambda^{m,0}_t} d\pi^m(t,x) \\
    & W^{p,0}_{k} = \xi^p_k. \\
\end{aligned}
\right.
&&
\end{flalign*}
\underline{Iteration $n\to n+1$ :}
\begin{equation}\label{eq iteration}
\left\{
\begin{aligned}
    & \lambda^{m,n+1}_t = \Phi^m\bigg(\sum_{m'\in\M} \itg_{-\infty}^{t-} h^m_{m'}(t-s) dN^{m',n}_s + \sum_{p\in\Pg}\itg_{-\infty}^{t-} \J^m_{p}(t-k,W^{p,n}_{k})d\mp{p}{k}\bigg) \\
    & N^{m,n+1}(C) = \itg_C \itg_{\R_+} \ind_{x \leq \lambda^{m,n+1}_t} d\pi^m(t,x) \\
    & W^{p,n+1}_{k} = \xi^p_k + \Phi^p\bigg(\sum_{m\in\M} \itg_{-\infty}^{k-} h^{p}_{m}(k-s)dN^{m,n}_s + \sum_{p'\in\Pg}\itg_{-\infty}^{k-} \J^{p}_{p'}(k-k',W^{p',n}_{k'})d\mp{p'}{k'}\bigg).
\end{aligned}
\right.
\end{equation}

Integrals appearing in the iteration may not converge, we will show that almost surely it does not happen. 

The proof can be divided into 4 steps. 1) Checking that the Picard iteration is well defined, 2) The Picard iteration converges almost surely and in $\Lrv^1$, 3) The limit is a solution of HAR equations, 4) Showing uniqueness.
\begin{flalign*}
\text{Let for }n\geq 0, \hspace{0.2\linewidth} \Lambda^m_n &= \sup_{t\in\R} \E{|\lambda^{m,n+1}_t - \lambda^{m,n}_t|}, &&\\
\Gamma^p_n &= \sup_{k\in\D_p} \E{|W^{p,n+1}_{k}-W^{p,n}_{k}|},\\
\Psi_n & = \max\Big( \sup_{m\in\M, t\in\R} \E{\vert \lambda^{m,n}_t\vert}, \sup_{(p,k)\in\Q} \E{|W^{p,n}_{k}|}\Big). 
\end{flalign*}
\textbf{Step 1:} From assumption \Cfime$[\xi]$ and the fact that $\lambda^{m,0}_t = \Phi^m(0)$ is deterministic we have $\Psi_0 <\infty$. By \Cspec$[\model]$, we have, $\Vert h^{\alpha}_m\Vert_1, \Vert h^{\alpha}_p \VLd{n_p}, \Vert b^{\alpha}_p \VLd{n_p} <\infty$ for $m\in\M$, $p\in\Pg$ and $\alpha\in\M\cup\Pg$. Thus, for $t\in\R$ we have
\begin{equation}\label{eq step 1}
    \E{\itg_{-\infty}^{t-} \vert h^m_{m'}(t-s)\vert dN^{m',0}_s}  = \E{\itg_{-\infty}^{t-} \vert h^m_{m'}(t-s)\vert \lambda^{m',0}_s ds} \leq  \Psi_0 \itg_{-\infty}^{t-} \vert h^m_{m'}(t-s)\vert ds <\infty.
\end{equation}
Thus, almost surely $\int_{-\infty}^{t-} h^m_{m'}(t-s) dS^{m',0}_s$ is well defined. Similarly, since $\vert J^{\alpha}_p(s,w)\vert \leq h^{\alpha}_p(s)\vert w\vert + b_p^{\alpha}(s)$, it is the case for any other integral involved in \eqref{eq iteration} defining $\lambda^{m,1}_t$ or $W^{p,1}_k$. Thus, $W^{p,1}_k$ is well defined almost surely and $\lambda^{m,1}$ is predictable and well defined $d\P{\omega}\otimes dt$ almost everywhere, which implies that $N^{m,1}$ is well defined almost surely. Finally, calculations similar to \eqref{eq step 1} leads to $\Psi_1 \leq K(1+\Psi_0)<\infty$ with $K$ a constant depending on $\model$. By induction, the same reasoning ensures that $\Psi_n<\infty, \ n\in\N$. The Picard iteration is well defined.

\textbf{Step 2:} The goal of this step is to show that
\begin{itemize}
	\item $\forall t \in\R, \ \exists \Omega_t$ almost sure such that $\forall m\in\M, \ \lambda^{m,n}_t \ \xrightarrow[n\to\infty]{\Omega_t - a.s. \ \text{and} \ \Lrv^1} \lambda^m_t\in\R$,
	\item $\exists \tilde{\Omega}$ almost sure such that $\forall (p,k)\in\Q, \ \  W^{p,n}_{k} \ \xrightarrow[n\to\infty]{\tilde{\Omega}-a.s. \ \text{and} \ \Lrv^1} \  W^p_{k}\in\R$,
	\item $\exists \tilde{\Omega}$ almost sure $\exists N^m\subset\R$ such that $\forall m, \ \forall C\subset\R \ \text{bounded}, \ \tilde{\Omega}-a.s., \ N^{m,n}\cap C = N^m\cap C$ for $n$ large enough,
	\item and finally, $\sup_{m\in \M, \ t\in\R} \E{ \vert \lambda^m_t \vert} < \infty \ \text{and} \ \sup_{(p,k)\in\Q} \E{ \vert W^p_{k} \vert} < \infty$.
\end{itemize}

Let $t\in\R$ and $n\in\N^*$, since $\Phi^m$ is $L_m$ Lipschitz,
\begin{align*}
    \vert \lambda^{m,n+1}_t - \lambda^{m,n}_t \vert & \leq  L_m \bigg\vert\sum_{m'} \itg_{-\infty}^{t-} h^m_{m'}(t-s) \big(dN^{m',n}_s-dN^{m',n-1}_s\big) \\
    & \hspace{1.5cm} + \sum_{p\in\Pg}\itg_{-\infty}^{t-} \J^m_{p}(t-k,W^{p,n}_{k})-\J^m_{p}(t-k,W^{p,n-1}_{k})\big)d\mp{p}{k}\bigg\vert.
\end{align*}
\begin{flalign*}
    \text{It follows that,} \quad \E{|\lambda^{m,n+1}_t - \lambda^{m,n}_t|} & \leq \sum_{m'} \itg_{-\infty}^{t} L_m|h^m_{m'}(t-s)| \E{|\lambda^{m',n}_s - \lambda^{m',n-1}_s|}ds && \\
    & \hspace{1.5cm} + \sum_{p\in\Pg}\itg_{-\infty}^{t-} L_m \vert h^m_{p}(t-k)\vert \E{\vert W^{p,n}_{k}-W^{p,n-1}_{k}\vert}d\mp{p}{k}\\
    & \leq \sum_{m'} L_{m}\Vert h^m_{m'}\Vert_1 \Lambda^{m'}_{n-1} + \sum_{p\in\Pg} L_m \Vert h^m_{p}\VLd{n_p} \Gamma^p_{n-1}.
\end{flalign*}
\begin{flalign*}
	\text{Thus we obtain,} \hspace{0.1\linewidth} \Lambda^m_{n} \leq \sum_{m'} L_{m}\Vert h^m_{m'}\Vert_1 \Lambda^{m'}_{n-1} + \sum_{p\in\Pg} L_m \Vert h^m_{p}\VLd{n_p} \Gamma^p_{n-1}. &&
\end{flalign*}
\begin{flalign*}
	\text{Similarly, one can show that,} \hspace{0.1\linewidth} \Gamma^p_{n} \leq \sum_{m} L_{p}\Vert h^p_{m}\Vert_1 \Lambda^{m}_{n-1} + \sum_{p'\in\Pg} L_p\Vert h^p_{p'}\VLd{n_{p'}} \Gamma^{p'}_{n-1}. &&
\end{flalign*}
Let $\Lambda_n = (\Lambda^m_n)_{m\in\M}$ and similarly $\Gamma_n = (\Gamma^p_n)_{p\in\Pg}$. The two previous equations rewrites $\begin{pmatrix} \Lambda_{n} \\ \Gamma_{n} \end{pmatrix} \leq \Hmod{\model} \begin{pmatrix} \Lambda_{n-1} \\ \Gamma_{n-1} \end{pmatrix}$ with $\Hmod{\model}$ defined in Definition \ref{def3.8}. An induction gives immediately
\begin{equation*}
	\begin{pmatrix} \Lambda_{n} \\ \Gamma_{n} \end{pmatrix} \leq (\Hmod{\model})^n \begin{pmatrix} \Lambda_{0} \\ \Gamma_{0} \end{pmatrix}.
\end{equation*}
Since we have $\Lambda^m_0, \ \Gamma^p_0 \leq \Psi_0+\Psi_1 <\infty$, it follows from assumption \Cspec$[\model]$ that there exist constants $\spr(\Hmod{\model}) \leq r<1$ and $K>0$ such that, for all $m\in\M$, $p\in\Pg$,
\begin{equation}\label{eq6.9}
    \forall n\in\N, \ \Lambda^m_n, \ \Gamma^p_n \leq Kr^n.
\end{equation}
Since the $0\leq r<1$, the series $\sum_n \Lambda^m_n$ converges, and so, $\sum_n \E{|\lambda^{m,n+1}_t - \lambda^{m,n}_t|} < \infty$, which is the same as
\[ \E{ \sum_n |\lambda^{m,n+1}_t - \lambda^{m,n}_t|} <\infty.\]
Therefore, almost surely, $\sum_n |\lambda^{m,n+1}_t - \lambda^{m,n}_t| <\infty$ which implies that for all $t\in\R$, $\lambda^{m,n}_t \ \xrightarrow[n\to\infty]{a.s.} \lambda^m_t \in \R$.\\
The same reasoning gives, $\forall (p,k)\in\Q, \  W^{p,n}_{k} \ \xrightarrow[n\to\infty]{a.s.} \  W^{p}_{k} \in \R$. Since $\Q$ is countable, the almost sure event of convergence can be taken uniformly over $\Q$. 

Since $\Psi_0 <\infty$, by \eqref{eq6.9}, $\Lrv^1$ bounds and $\Lrv^1$ convergences are also clear.

Let $j\in\N^*$ and $m\in\M$, then we have
\begin{equation*}
    \E{\card\big((N^{m,n}\vartriangle N^{m,n+1})\cap[-j,j]\big)} = \itg_{[-j,j]} \E{\vert \lambda^{m,n}_s - \lambda^{m,n+1}_s\vert} ds \leq 2 j \Lambda^m_n. 
\end{equation*}
Since the cardinal only takes non-negative integer values and because $\sum_n 2 j \Lambda^m_n <\infty$, by Borel-Cantelli Lemma, there exists $\Omega_j$ almost sure and $n_0=n_0(j,\omega)$ such that for all $n\in\N$, for all $m\in\M$,
\[ N^{m,n_0+n}\cap[-j,j] = N^{m,n_0}\cap[-j,j], \quad \text{on} \ \Omega_j.\]
Let $\tilde{\Omega} = \cap_{j}\Omega_j$, almost sure since $\N^*$ is countable. Then, we can define $N^m$ on $\tilde{\Omega}$ by $N^{m}\cap[-j,j] = N^{m,n_0(j)}\cap[-j,j]$ for all $j$. Finally, if $C\subset\R$ is bounded, then there exists $j\in\N^*$ such that $C\subset[-j,j]$, thus, almost surely (on $\tilde{\Omega}$ which does not depend on $C$), we have, for $n$ large enough,
\[ N^{m,n}\cap C = N^{m,n}\cap[-j,j] \cap C = N^{m}\cap[-j,j] \cap C = N^m\cap C.\]

\textbf{Step 3:} In this step we show that the limit process $(\lambda,N,W)$ found in the Step 2 is indeed a $\Equ{\model}{-\infty}$ process and is also 1-stationary.

First we prove that for all $m\in\M$, the set $N^m$ matches its aimed value $\bar{N}^m$ being the points under $\lambda^m$. One can observe that $\lambda^m$ is predictable as limit of predictable terms. So, for $j\in\N^*$ and $m\in\M$, we have
\begin{align*}
    \E{\card\big( (N^{m,n}\vartriangle \bar{N}^{m})\cap[-j,j])\big)} & = \E{\itg_{-j}^j\itg_{\R_+} \ind_{\lambda^{m,n}_s \wedge \lambda^m_s < x \leq \lambda^{m,n}_s \vee \lambda^m_s} d\pi^m(s,x)} \\
    & = \itg_{-j}^j \E{\vert\lambda^{m,n}_s -\lambda^m_s\vert} ds\\
    & \leq 2j \dfrac{Kr^n}{1-r} \xrightarrow[n\to\infty]{}0.
\end{align*}

Borel-Cantelli Lemma applies once again and we conclude that $\forall j\in\N^*$, almost surely, $N^{m}\cap[-j,j] = \bar{N}^{m}\cap[-j,j]$. Which implies that almost surely, $N^{m}=\bar{N}^{m}$.

Let us denote $\bar{\lambda}^m_t := \Phi^m\bigg(\displaystyle\sum_{m'\in\M} \itg_{-\infty}^{t-} h^m_{m'}(t-s) dN^{m'}_s + \sum_{p\in\Pg}\itg_{-\infty}^{t-} \J^m_{p}(t-k,W^{p}_{k})d\mp{p}{k}\bigg)$. Let us now check that for all $m\in\M$ we have $\lambda^m_t = \bar{\lambda}^m_t \quad d\P{\omega}\otimes dt$ almost everywhere. Let $m\in\M$ and $t\in\R$, then,
\begin{align*}
    \E{\vert \lambda^{m,n}_t -\bar{\lambda}^m_t\vert} &  \leq L_m\bigg( \sum_{m'\in\M} \itg_{-\infty}^{t-} \vert h^m_{m'}(t-s)\vert \E{\vert \lambda^{m',n}_s-\lambda^{m'}_s\vert}ds + \sum_{p\in\Pg}\itg_{-\infty}^{t-} \vert h^m_{p}(t-k)\vert\E{\vert W^{p,n}_{k}-W^p_k\vert}d\mp{p}{k}\bigg)\\
    & \leq C r^n \xrightarrow[n\to\infty]{}0,
\end{align*}
with $C$ a constant depending on $\model$. Thus, for all $t\in\R$ we have $\P{\lambda^m_t = \bar{\lambda}^m_t}=1$. Then, Fubini theorem implies that $d\P{\omega}\otimes dt$ almost everywhere, $\lambda^m_t = \bar{\lambda}^m_t$.  
Similarly, almost surely, for all $(p,k)\in\Q$,
\[W^p_k = \xi^p_k + \Phi^p\bigg(\sum_{m} \itg_{-\infty}^{k-} h^{p}_{m}(k-s)dN^{m}_s + \sum_{p'\in\Pg}\itg_{-\infty}^{k-} \J^{p}_{p'}(k-k',W^{p'}_{k'})d\mp{p'}{k'}\bigg).\]
In the end we constructed a $\Equ{\model}{-\infty}$ process. It is also 1-stationary since the initialisation and the Picard iteration are 1-stationary, thus, the limit is also 1-stationary.

\textbf{Step 4:} In this last step we show uniqueness. Consider two solutions, indexed by 1 and 2, and define 
\[\beta^m = \sup_{t\in\R} \E{\vert (\lambda^m_t)_1 - (\lambda^m_t)_2\vert}<\infty \quad \text{and} \quad \omega^p = \sup_{k\in\D_p} \E{\vert (W^p_{k})_1-(W^p_{k})_2\vert}<\infty.\] 
Then, using equations \eqref{eq2.2}, one has,
\[ \E{\vert (\lambda^m_t)_1-(\lambda^m_t)_2\vert} \leq \sum_{m'} \itg_{-\infty}^{t-} L_{m} \vert h^m_{m'}(t-s)\vert \beta^{m'} ds + \sum_{p\in\Pg}\itg_{-\infty}^{t-}L_m\vert h^{m}_{p}(t-k)\vert \omega^p d\mp{p}{k}.\]

\begin{flalign*}
	\text{And thus} \hspace{0.2\linewidth}
	\left\{
	\begin{aligned}
		& \beta^m \leq \sum_{m'\in\M} L_{m}\Vert h^m_{m'}\Vert_1 \beta^{m'} + \sum_{p\in\Pg} L_m\Vert h^m_{p}\VLd{n_{p}} \omega^p, \\
		& \omega^p \leq \sum_{m\in\M} L_{p}\Vert h^p_{m}\Vert_1 \beta^m + \sum_{p'\in\Pg} L_p\Vert h^p_{p'}\VLd{n_{p'}} \omega^{p'}.
	\end{aligned}
	\right.
	&&
\end{flalign*}
If $\beta = (\beta^m)_{m\in\M}$ and $\omega= (\omega^p)_{p\in\Pg}$, we thus have $\begin{pmatrix} \beta \\ \omega \end{pmatrix} \preceq \Hmod{\model} \begin{pmatrix} \beta \\ \omega \end{pmatrix}$. Since $\spr(\Hmod{\model})<1$ we have $\beta=0$ and $\omega=0$. Since $\Q$ is countable, a.s., $\forall (p,k)\in\Q, \ (W^p_{k})_1=(W^p_{k})_2$. It also implies that $d\P{\omega}\otimes dt$ almost everywhere, $(\lambda^m_t)_1 = (\lambda^m_t)_2$. Finally, one also has,
\begin{align*}
    \E{ \card\big(N^m_1 \vartriangle N^m_{2}\big)}  =  \E{ \itg_{\R} \vert  (\lambda^m_t)_1-(\lambda^m_t)_2\vert dt} \leq \itg_{\R} \beta^m dt =0.
\end{align*} 
Thus, a.s., $\forall m, \ S^m_{1}=S^m_{2}$.

\section{Proof of results of Section \ref{sec4}}\label{sec7}

\subsection{Proof of Lemma \ref{lem4.3} and Proposition \ref{prop4.4}}

\begin{proof}[Proof of Lemma \ref{lem4.3}]
Given a sequence $Z$ it is clear that the sequence $V$ is well defined by \eqref{eq4.1} since recursively (on increasing values of $k$) one can express $V^p_k$ as linear combinations of the $Z^{p'}_{k'}$ with $k' \leq k$, which proves existence of the coefficients. Uniqueness is obtained by setting the sequence $Z$ to $(\ind_{(p,k)=(p_0,k_0)})_{(p,k)\in\Q, \ k\geq k_0}$ since then associated $V$ sequence satisfies
\[ V^p_{k} = \sum_{p'\in\Pg} \itg_{k_0}^{k} \coef{p,k}{p',k'}{\model} Z^{p'}_{k'} d\mp{p'}{k'} = \coef{p,k}{p_0,k_0}{\model},\]
which proves uniqueness in \textbf{point 1} since $V$ is well defined and forces the value of the coefficient. We now prove \textbf{point 3}. The fact that $\coef{p_1,k_0}{p_0,k_0}{\model}=\ind_{p_0=p_1}$ is quite clear, take $Z=(\ind_{(p,k)=(p_0,k_0)})_{(p,k)\in\Q, \ k\geq k_0}$. On one side, by \textbf{point 1} we have
\begin{equation*}
	V^p_k  = \sum_{p'\in\Pg} \itg_{k_0}^k \coef{p,k}{p',k'}{\model} Z^{p'}_{k'} d\mp{p'}{k'} = \coef{p,k}{p_0,k_0}{\model},
\end{equation*}
thus we have $V^{p_1}_{k_0} = \coef{p_1,k_0}{p_0,k_0}{\model}$. On the other side, again by \textbf{point 1} we have
\begin{equation*}
    V^{p_1}_{k_0} = Z^{p_1}_{k_0} + \sum_{p'\in\Pg} \itg_{k_0}^{k_0 -} h^{p_1}_{p'}(k_0-k') V^{p'}_{k'} d\mp{p'}{k'} = Z^{p_1}_{k_0} = \ind_{p_0=p_1},
\end{equation*}
which concludes on $\coef{p_1,k_0}{p_0,k_0}{\model}=\ind_{p_0=p_1}$. Let $(Z^p_k)_{(p,k)\in\Q, \ k\geq k_0}$ be an arbitrary sequence and $V$ the associated sequence. Let $(p,k)\in\Q$ with $k>k_0$. Then 
\begin{align*}
    V^p_{k} & = Z^p_k + \sum_{p''\in\Pg} \itg_{k_0}^{k-} h^{p}_{p''}(k-k'') V^{p''}_{k''} d\mp{p''}{k''}\\
    & = Z^p_k + \sum_{p''\in\Pg} \itg_{k_0}^{k-} h^{p}_{p''}(k-k'') \left[ \sum_{p'\in\Pg} \itg_{k_0}^{k''} \coef{p'',k''}{p',k'}{\model} Z^{p'}_{k'} d\mp{p'}{k'}\right] d\mp{p''}{k''}\\
    & = Z^p_k + \sum_{p'\in\Pg} \itg_{k_0}^{k-} \left[ \sum_{p''\in\Pg} \itg_{k'}^{k-} h^{p}_{p''}(k-k'') \coef{p'',k''}{p',k'}{\model} d\mp{p''}{k''} \right] Z^{p'}_{k'}d\mp{p'}{k'}.
\end{align*}
By identification of the coefficients one obtains \textbf{point 3}. \textbf{Point 2}, aka the 1-periodicity, can be proved by looking at the version of $Z$ shifted by $1$, denoted $\tilde{Z}$. Then the associated $\tilde{V}$ sequence is $V$ shifted by $1$ and thus the coefficients must be 1-periodic. It can also be proved by induction for increasing values of $k-k'$ by using the formula of \textbf{point 3}.
\end{proof}

\begin{proof}[Proof of Proposition \ref{prop4.4}]
It is quite clear that the closed form satisfies the recursive property of point 3 of Lemma \ref{lem4.3}. Indeed, one only needs to cut the paths in two sub paths: all the first steps except the last one and the last one. Thus the closed form must be equal to the coefficients.    
\end{proof}

\subsection{Proof of Theorem \ref{th4.5}}

To prove Theorem \ref{th4.5} we solve the linear recursive equation of $W$ on itself by using coefficients defined in Lemma \ref{lem4.3} and Proposition \ref{propA.2} in Appendix \ref{appendixA}. It leads to an expression of $W$ only in term of $N$ and $\xi$. Then we inject this new expression of $W$ in the intensity equation to obtain an expression of the intensity only in terms of $N$ and $\xi$ leading to cluster representation. $L^1$ bounds on cluster functions are derived with the help of Lemma \ref{lemA.1} in Appendix \ref{appendixA}.

Recall that $\J^{\alpha}_{p}(s,w)=h^{\alpha}_{p}(s) w + b^{\alpha}_{p}(s)$. Let
\[Z^p_{k} :=  \xi^p_k + \sum_{m\in\M} \itg_{t_0}^{k-} h^{p}_m(k-s)dN^m_s + \sum_{p'\in\Pg} \itg_{t_0}^{k-} b^p_{p'}(k-k')d\mp{p'}{k'}, \]
\begin{flalign*}
    \text{so that} \hspace{0.1\linewidth} W^p_{k} & = \xi^p_k + \sum_{m\in\M} \itg_{t_0}^{k-} h^p_m(k-s)dN^{m}_s + \sum_{p'\in\Pg} \itg_{t_0}^{k-} \big(h^p_{p'}(k-k')W^{p'}_{k'}+b^p_{p'}(k-k')\big)d\mp{p'}{k'} &&\\
    & = Z^p_{k} + \sum_{p'\in\Pg}\itg_{t_0}^{k-} h^{p}_{p'}(k-k') W^{p'}_{k'} d\mp{p'}{k'}.
\end{flalign*} 
If $t_0\in\R$, then \eqref{dem th 4.5 eq1} is free by Lemma \ref{lem4.3}. If $t_0 = -\infty$, in order to apply Proposition \ref{propA.2} in Appendix \ref{appendixA}, which extends point 1 of Lemma \ref{lem4.3} to $k_0 = -\infty$ and to random sequences, we need $Z$ to be uniformly bounded in $\Lrv^1$. We have
\[ \sup_{(p,k)\in\Q} \E{\vert Z^p_{k}\vert } \leq   \sup_{p,k}\E{\xi^p_k } + \sum_m (\sup_{p}\Vert h^{p}_m\Vert_1) \sup_{s\in\R}\E{\vert \lambda^m_{s}\vert } + \sum_{p'\in\Pg} \sup_{p} \Vert b^p_{p'}\VLd{n_{p'}}. \]
Thus from \Cfime$[\xi]$, \Cspec$[\model]$ and Theorem \ref{th3.9} we have, $\sup_{(p,k)\in\Q} \E{\vert Z^p_{k}\vert }  < \infty$, leading to
\begin{equation}\label{dem th 4.5 eq1}
	W^p_{k}  = \sum_{p'\in\Pg}\itg_{t_0}^{k} \coef{p,k}{p',k'}{\model} Z^{p'}_{k'} d\mp{p'}{k'} = \Hd^{b}_{p,k}(t_0) + \Hd^{\xi}_{p,k}(t_0) + \sum_m \itg_{t_0}^{k-} \Hd^m_{p,k}(t_0,t) dN^m_t.
\end{equation}
Then one can inject this expression into equation \eqref{eq2.2a} to get the expression of the intensity, which proves the first part of Theorem \ref{th4.5}. It remains to check the $\Lf^1$ bound under assumption \Cspecbis$[\model]$.
\begin{align*}
    \mathfrak{h}^m_{m_0}(s,t) & = h^m_{m_0}(t-s) + \sum_{p,p'\in\Pg}\itg_{s+}^{t-}\itg_{s+}^k  h^m_{p}(t-k) \coef{p,k}{p',k'}{\model}h^{p'}_{m_0}(k'-s) d\mp{p'}{k'} d\mp{p}{k} \\
    & = h^m_{m_0}(t-s) + \sum_{p,p'\in\Pg}\itg_{s+}^{\infty}\itg_{s+}^{\infty} \ind_{  k'\leq k<t}  h^m_{p}(t-k) \coef{p,k}{p',k'}{\model}h^{p'}_{m_0}(k'-s)  d\mp{p}{k} d\mp{p'}{k'}.
\end{align*}
Thus by Fubini,
\begin{equation}\label{eq9.1}
    \Vert \mathfrak{h}^m_{m_0}(s,\cdot)\Vert_1 \leq \Vert h^m_{m_0}\Vert_1 + \sum_{p,p'\in\Pg}\itg_{s+}^{\infty}\itg_{s+}^{\infty}  \Vert h^m_{p}\Vert_1  \ind_{  k'\leq k} \coef{p,k}{p',k'}{\model}h^{p'}_{m_0}(k'-s)  d\mp{p}{k} d\mp{p'}{k'}.
\end{equation} 
Lemma \ref{lemA.1} states bounds on sums of the coefficients, as we will need. Fix $p,p'\in\Pg$,
\begin{align*}
     \itg_{s+}^{\infty}\itg_{s+}^{\infty}  \ind_{  k'\leq k}  \coef{p,k}{p',k'}{\model}  h^{p'}_{m_0}(k'-s)  d\mp{p}{k} d\mp{p'}{k'} & = \sum_{k_0\in \D_{p'}\cap (s,s+1]} \sum_{n\in\N} \itg_{k_0+n}^{\infty}  \coef{p,k}{p',k_0+n}{\model}h^{p'}_{m_0}(k_0+n-s)  d\mp{p}{k} \\
    & =  \sum_{k_0\in \D_{p'}\cap (s,s+1]} \sum_{n\in\N} \itg_{k_0}^{\infty} \coef{p,k}{p',k_0}{\model}h^{p'}_{m_0}(k_0+n-s)  d\mp{p}{k} \\
    & \leq \sum_{k_0\in \D_{p'}\cap (s,s+1]}  \itg_{k_0}^{\infty} \coef{p,k}{p',k_0}{\model}  d\mp{p}{k}  \Vert h^{p'}_{m_0}\VLd{1} \\
    & \leq  n_{p}  \big[(\Id-\Hmat^W_W)^{-1}\big]_{pp'} \Vert h^{p'}_{m_0}\VLd{1},
\end{align*}
where we use, the fact that $\D_{p'} = \sqcup_{n\in\Z} (\D_{p'}\cap(0,1] + n)$, 1-periodicity of the coefficients and point 2 of Lemma \ref{lemA.1}. Combining this bound with \eqref{eq9.1} concludes the proof.

\subsection{Proof of Theorem \ref{th corps moment exp}}

The proof of Theorem \ref{th corps moment exp} is based on Proposition \ref{propB.1} in Appendix \ref{appendixB} and on Lemma \ref{lemC.1} in Appendix \ref{appendixC}. Proposition \ref{propB.1} gives the precise domination of a general HAR process by a linear HAR process. Thus it is enough to prove Theorem \ref{th corps moment exp} for linear parameters since assumptions of Theorem \ref{th corps moment exp} transfer from $(\model,\xi)$ to $(\model^+,\xi^+)$. Thus in the sequel we assume that $(\model,\xi) \in\lm$. It is also clear that for linear models, HAR processes started at time $t_0\in\R$ are dominated by the one stated at $t_0=-\infty$, thus we can assume $t_0=-\infty$. Thus in the sequel $X$ is the $\Equ{(\model,\xi)}{-\infty}$ process from Theorem \ref{th3.9}, with $(\model,\xi) \in\lm$. Theorem \ref{th4.5} applies thus we have a cluster decomposition for $X$. 

Remark that by assumption \Cspecter$[\model]$ we have $\Vert h^p_m\VLd{1} <\infty$, and thus for any $n\in\N^*, \ m\in\M,\ p\in\Pg$, we have
\begin{equation}
	\Vert h^p_m\VLd{1/n} \leq \Vert h^p_m\VLd{1} < \infty.
\end{equation}
Note that by \Cspec$[\model]$ we have $h_m^{\alpha} \in \Lf^1$ and $h^{\alpha}_{p},b^{\alpha}_{p} \in \Ld{n_p}$ for all $p\in\Pg,\ m\in\M, \ \alpha \in \M\cup\Pg$.
To apply Lemma \ref{lemC.1}, we need to fit both $\Mass_F(X)$ and the immigrant rate $(\I^m)_{m\in\M}$ to the framework of Lemma \ref{lemC.1}. From Theorem \ref{th4.5} we have, $\I^m_t = \mu_m + \sum_{p\in\Pg} \itg_{-\infty}^{t-} \left[ h^m_{p}(t-k)(\Hd^{b}_{p,k} + \Hd^{\xi}_{p,k})+b^m_p(t-k)\right]d\mp{p}{k}.$

From the definition of $\Hd^{b}_{p,k}$ and point 1 of Lemma \ref{lemA.1} (the coefficients are summable), we have
\begin{equation*}
	\Hd^{b}_{p,k}  \leq \sum_{p'\in\Pg}\itg_{-\infty}^k \coef{p,k}{p',k'}{\model} \Vert b^{p'}_{p''} \VLd{n_{p''}} d\mp{p'}{k'} \leq \sum_{p'\in\Pg} (\Id-\Hmat^W_W)^{-1}_{p,p'} \Vert b^{p'}_{p''}\VLd{n_{p''}}<\infty.
\end{equation*}
Thus we clearly have $\sup_{(p,k)\in\Q} \Hd^{b}_{p,k} < \infty$. Since $h^m_{p}, b^m_{p} \in \Ld{n_p}$, we have
\begin{equation}
	\mathscr{i}_m := \sup_{t\in\R}\left( \mu_m + \sum_{p\in\Pg} \itg_{-\infty}^{t-} \left[ h^m_{p}(t-k)\Hd^{b}_{p,k}+b^m_p(t-k)\right]d\mp{p}{k} \right) < \infty.
\end{equation}

Recall that $\Hd^{\xi}_{p,k} = \displaystyle\sum_{p'\in\Pg}\itg_{-\infty}^k \coef{p,k}{p',k'}{\model} \xi^{p'}_{k'} d\mp{p'}{k'}$. Thus it follows that
\begin{equation}
	\I^m_t \leq \mathscr{i}_m + \sum_{(p',k')\in\Q} A^m_{p',k'}(t) \xi^{p'}_{k'} \quad \text{with} \quad A^m_{p',k'}(t) = \displaystyle\sum_{p\in\Pg} \itg_{-\infty}^{t-} \ind_{k'\leq k <t} h^m_p(t-k) \coef{p,k}{p',k'}{\model} d\mp{p}{k}.
\end{equation}
Let $t\in\R$, $p'\in\Pg$ and $m\in\M$, applying point 1 of Lemma \ref{lemA.1} leads to,
\begin{align*}
	\itg_{\R} A^m_{p',k'}(t) d\mp{p'}{k'}  = \sum_{p\in\Pg} \itg_{-\infty}^{t-} \itg_{-\infty}^{k}  h^m_p(t-k) \coef{p,k}{p',k'}{\model} d\mp{p'}{k'} d\mp{p}{k} & \leq \sum_{p\in\Pg} \itg_{-\infty}^{t-}  h^m_p(t-k) (\Id-\Hmat^W_W)^{-1}_{p,p'} d\mp{p}{k}\\
	& \leq \sum_{p\in\Pg}  \Vert h^m_p\VLd{n_p} (\Id-\Hmat^W_W)^{-1}_{p,p'}.
\end{align*}
Finally we obtain, with the notations of Lemma \ref{lemC.1},
\begin{equation}
	A_{\infty} := \Big(\sup_{t\in\R}\itg_{\R} A^m_{p',k'}(t) d\mp{p'}{k'}\Big)_{p',m\in\Pg,\M} \preceq \big[(\Id-\Hmat^W_W)^{-1}\big]^{\intercal} \Hmat^N_W \prec \infty.
\end{equation}
The intensity fits now the framework of Lemma \ref{lemC.1}. We need now to fit $\Mass_F(X)$ into the framework of Lemma \ref{lemC.1}. Recall that by Theorem \ref{th4.5} we have $W^p_{k} = \Hd^{b}_{p,k} + \Hd^{\xi}_{p,k} + \sum_m \itg_{-\infty}^{k-} \Hd^m_{p,k}(t) dN^m_t$. Since $\Hd^{b}_{p,k}$ is bounded, there exists $K<\infty$ such that
\begin{align*}
	&\Mass_F(X) \leq \sum_m \itg F^m(t)dN^m_t + K \kappa_n(F) + \sum_p \itg F^p(k) \Big( \Hd^{\xi}_{p,k} + \sum_m \itg \Hd^m_{p,k}(t) dN^m_t \Big) d\mp{p}{k}\\
	& \ \ \ \ =  K \kappa_n(F) + \sum_m \itg \Big(\underbrace{F^m(t)+\sum_p \itg F^p(k)\Hd^m_{p,k}(t) d\mp{p}{k}}_{=: B^m(t)}\Big) dN^m_t + \sum_{p'} \itg \Big(\underbrace{\sum_p \itg F^p(k) \coef{p,k}{p',k'}{\model} d\mp{p}{k}}_{=: C^{p'}_{k'}}\Big) \xi^{p'}_{k'}d\mp{p'}{k'}.
\end{align*}
Considering the expression of $\Hd^m_{p,k}(t)$, we have $B^m(t) = F^m(t)+\sum_{p,p'\in\Pg} \itg \itg F^p(k) \coef{p,k}{p',k'}{\model} h^{p'}_m(k'-t) d\mp{p'}{k'} d\mp{p}{k}$. One can show, using summability of coefficients (Lemma \ref{lemA.1}), that there exists $K<\infty$, depending on $\model$, such that 
\begin{equation*}
	\Vert B^m\VLd{1/n} \leq K \kappa_n(F) \quad \text{and} \quad \Vert C^p \VLd{n_p} \leq K \kappa_n(F).
\end{equation*}
Recall in Lemma \ref{lemC.1} that $\L:\R_+^{\M} \longrightarrow \R_+^{\M}$ is the log-Laplace function of a $\Pois(\mathfrak{H}_{\model}^{\intercal})$ Galton Watson tree defined in Theorem 3.1 of \cite{leblanc_sharp_2025}. Applying Lemma \ref{lemC.1} gives, up to increasing $K$ and with $f(x,y) = x + y + x^2 + y^2$,
\begin{equation*}
	\E{e^{\Mass_F(X)}} \leq e^{K\kappa_n(F)} \times \exp\Big[f\Big(Kn\vert e^{\L(K\kappa_n(F)\ones)}-\ones)\vert_{\infty},K \kappa_n(F)\Big)\Big].
\end{equation*}
Since in Theorem 3.1 of \cite{leblanc_sharp_2025} it is proved that $\L$ is finite and Lipschitz on a neighborhood of $0$ for $\vert \cdot \vert_{\infty}$, and because $n\geq 1$, it is clear that $\E{e^{\Mass_F(X)}} \leq e^{K' n^2 \kappa_n(F)}$ whenever $\kappa_n(F)$ is small enough with $K'$ a finite constant.

\subsection{Proof of Proposition \ref{prop5.6}}

The fact that $\spr(\tilde{\Hmat}_{\model})<1 \Rightarrow \spr(\Hmod{\model})<1$ is a consequence of $\tilde{\Hmat}_{\model}\succeq \Hmod{\model}$. Indeed, for matrix with non-negative entries the spectral radius is increasing with the entries. Thus $\spr(\tilde{\Hmat}_{\model}) \geq \spr(\Hmod{\model})$, which implies $\spr(\Hmod{\model})<1$. For $\spr(\mathfrak{H}_{\model^+})<1$ it is a direct consequence of \eqref{eq5.1}, stating that 
\[\mathfrak{H}_{\model^+} \preceq \Hmat^N_N + \Hmat^N_W (\Id-H_W^W)^{-1}\tilde{\Hmat}^W_N, \]
and the following applied to $\tilde{\Hmat}_{\model}$; For any square matrix $R=\begin{pmatrix} A & B \\ C & D\end{pmatrix}$ with non-negative entries, $A,B$ also square matrices, if $\spr(R)<1$ then we have
\begin{equation}\label{borne rayon spec}
	\spr\Big( A+B(\Id-D)^{-1}C = A+B\sum_{n\in\N} D^n C\Big) \leq \spr(R).
\end{equation}
To prove \eqref{borne rayon spec}, one can check that for all $i\in\N$ we have the following,
\[\sum_{j\geq i} \bigg( A+B\sum_{n\in\N} D^n C \bigg)^j \preceq \begin{pmatrix} \Id & 0 \end{pmatrix} \bigg[\sum_{j\geq i} R^j\bigg] \begin{pmatrix} \Id \\ 0 \end{pmatrix}.\]
Thus one concludes with the following result: for any norm, we have
\[\forall 0 \leq x < 1, \quad \spr(R) \leq x \iff \forall \varepsilon >0, \ \limsup_{n\to\infty} \frac{ \big\Vert \sum_{j\geq n} R^j \big\Vert}{(x+\varepsilon)^n}  =0.\]

\section{Proof of results of Section \ref{sec5}}\label{sec8}

\subsection{Proof of Proposition \ref{prop6.1}}

Let $\model_{c}$ be parameters being equal to $\model$ except that $h^p_{p'}$ is replaced by $x\mapsto e^{ c x } h^p_{p'}(x)$ for all $p,p'\in\Pg$ where $c>0$ will be chosen later. Then, from the closed form of the coefficients (see Proposition \ref{prop4.4}) it is clear that we have
\begin{equation}\label{eq11.1}
    \coef{p,k}{p',k'}{\model_c} = e^{c (k-k')} \coef{p,k}{p',k'}{\model}.
\end{equation}
Considering assumption \Cexptail$[\model]$, by dominated convergence the function $c \mapsto \Vert e^{c \id} h^p_{p'}\VLd{n_{p'}}$ is continuous on an open set containing $0$. Thus, since the spectral radius is a continuous function, \Cspecbis$[\model]$ ensures that \Cspecbis$[\model_{c}]$ holds for $c$ small enough. Thus by point 1 of Lemma \ref{lemA.1},
\[\itg_{-\infty}^{k} \coef{p,k}{p',k'}{\model_{c}}d\mp{p'}{k'} \leq K <\infty,\]
with $K$ (which depends on $\model$ and $c$) uniformly over $(p,k)\in\Q$, which, combined to \eqref{eq11.1} gives the first result. For cluster functions, the result is a direct consequence of \Cexptail$[\model]$, exponential decay of coefficients we just proved and the expression of clusters functions given in Definition \ref{def4.7}. Indeed, for $t>s$, one has (the constants $K,c>0$ might change from line to line),
\begin{align*}
	\mathfrak{h}^m_{m'}(s,t) & = h^m_{m'}(t-s) + \sum_{p,p'\in\Pg}\itg_{s+}^{t-}\itg_{s+}^k h^m_{p}(t-k) \coef{p,k}{p',k'}{\model}h^{p'}_{m'}(k'-s) d\mp{p'}{k'} d\mp{p}{k} \\
	& \leq Ke^{c(t-s)} + \sum_{p,p'\in\Pg}\itg_{s+}^{t-}\itg_{s+}^k Ke^{c(t-k)} Ke^{c(k-k')}Ke^{c(k'-s)} d\mp{p'}{k'} d\mp{p}{k}\\
	& \leq Ke^{c(t-s)} + Ke^{c(t-s)} \sum_{p,p'\in\Pg}\itg_{s+}^{t-}\itg_{s+}^k  d\mp{p'}{k'} d\mp{p}{k}\\
	& \leq Ke^{c(t-s)} + K(t-s)^2 e^{c(t-s)} \leq Ke^{c(t-s)}.
\end{align*}

\subsection{Proof of Theorem \ref{th6.3}}

In this proof we denote $\model = \big((h^{\alpha}_{m})_{\alpha,m}, (\J^{\alpha}_p,h^{\alpha}_{p},b^{\alpha}_p)_{\alpha,p}, \ (\Phi^{m})_{m}, \ (\Phi^p)_p \big)$. In the sequel, $K$ and $c$ are positive constants depending only on $\model$. Usually $K$ is intended to be large and $c$ small. They might change from line to line. We denote by $\big((\lambda^{m,\mathfrak{C}_i})_{m\in\M},(N^{m,\mathfrak{C}_i})_{m\in\M},(W^{p,\mathfrak{C}_i})_{p\in\Pg}\big)$ the unique non-explosive $\Equ{\model}{t_0,\mathfrak{C}_i}$ process for $i=1,2$ (Theorem \ref{th3.4} applies). We define the deviation process $\big((\Delta \lambda^m)_{m\in\M}, (\Delta N^{m})_{m\in\M}, (\Delta W^{p})_{p\in\Pg}\big)$ by
\begin{itemize}
\item For $t\geq t_0$ let, $\quad \Delta\lambda^m_t = \vert \lambda^{m,\mathfrak{C}_1}_t - \lambda^{m,\mathfrak{C}_2}_t\vert$.
\item For $t \geq t_0$ let, $\quad d\Delta N^{m}_t = \vert dN^{m,\mathfrak{C}_1}_t - dN^{m,\mathfrak{C}_2}_t\vert = \pi^m\Big( \{t\} \times \big(\min(\lambda^{m,\mathfrak{C}_1}_t,\lambda^{m,\mathfrak{C}_2}_t),\max(\lambda^{m,\mathfrak{C}_1}_t,\lambda^{m,\mathfrak{C}_2}_t)\big]\Big)$.
\item For $(p,k)\in\Q$  with $k\geq t_0$ let, $\quad \Delta W^p_k = \vert W^{p,\mathfrak{C}_1}_k - W^{p,\mathfrak{C}_2}_k\vert$.
\end{itemize}

Lemma \ref{lemB.2} in Appendix \ref{appendixB} dominates the deviation process by a process similar to a vanishing linear HAR process with parameters $\model^+\in\lm$ defined in Definition \ref{def5.1}. Cluster fonctions of $\model^+$ defined in Definition \ref{def4.7} are denoted by $\tilde{\mathfrak{h}}^m_{m'}$. Lemma \ref{lemB.2} states the following.

Conditionally to $\F_{t_0-}$ there exist Poisson random measures $\mathscr{P}^m$ on $[t_0,\infty)\times \R_+$ for $m\in\M$, and a process $\big((\mathscr{l}^m)_{m\in\M},(\mathscr{N}^m)_{m\in\M},(\mathscr{W}^p)_{p\in\Pg}\big)$ adapted (and predictable for $\mathscr{l}$) to the filtration $\G_t =  \sigma \big(\mathscr{P}^m\big\vert_{[t_0,t]\times \R_+}, \ m\in\M\big)$ such that, 
\begin{itemize}
    \item for $m\in\M, \ t\geq t_0$, $ \Delta\lambda^m_t \leq \mathscr{l}^m_t$,
    \item for $m\in\M, \ t\geq t_0$, $d\Delta N^{m}_t \leq d\mathscr{N}^m_t$,
    \item for $(p,k)\in\Q, \ k\geq t_0$, $\Delta W^{p}_k \leq \mathscr{W}^p_k$,
\end{itemize}
\begin{flalign*}
\text{And for } t\geq t_0, \ C\subset [t_0,\infty) \text{ and } (p,k)\in\Q \text{ with } k\geq t_0, \quad
\left\{
\begin{aligned}
& \mathscr{l}^m_t  = u_0 e^{-c(t-t_0)} + \sum_{m'\in\M}\itg_{t_0}^{t-} \tilde{\mathfrak{h}}^m_{m'}(s,t) d\mathscr{N}^{m'}_s \\
& \mathscr{N}^m(C) = \itg_{C\times \R_+} \ind_{0< x\leq \mathcal{L}^m_s } d\mathscr{P}^m(s,x)\\
& \mathscr{W}^p_{k} = u_0 e^{-c(k-t_0)} + \sum_{m\in\M} \itg_{t_0}^{k-} Ke^{-c(k-s)} d\mathscr{N}^m_s,
\end{aligned}
\right. &&
\end{flalign*}
where $u_0 = K \Massiexp_{\delta,t_0}\big(\mathfrak{C}_1 \circleddash \mathfrak{C}_2\big)$ with $K, c, \delta>0$ constants depending only on $\model$. The process $\allowbreak\big((\mathscr{l}^m)_{m\in\M},\allowbreak (\mathscr{N}^m)_{m\in\M}, (\mathscr{W}^p)_{p\in\Pg}\big)$ is quite similar to a $\Equ{\model^+}{t_0}$ process. The main difference is that immigration rates of point processes vanish and for time series we have vanishing quantities instead of 1-stationary ones as in Theorem \ref{th4.5}. Thus one may have the following reasoning: since immigrant rates are vanishing, at some point there will be no more cluster roots and thus no more points when all the clusters will have ended. Similarly, values of time series will vanish.

Let $T_e$ the extinction time of $(\mathscr{N}^m)_{m\in\M}$, defined as the time of the last point, $T_e = \max_{m\in\M} \sup \ \{t\geq t_0 \mid d\mathscr{N}^m_t = 1\}$. Let us prove the following result.

\begin{proposition}\label{prop11.1}
    There exists $K,c>0$ constants depending only on $\model$ such that
    \begin{equation*}
        \forall t\geq 0, \ \P{T_e \leq t_0+t} \geq 1 - K \E{\Massiexp_{\delta,t_0}\big(\mathfrak{C}_1 \circleddash \mathfrak{C}_2\big)}e^{-ct}.
    \end{equation*}
\end{proposition}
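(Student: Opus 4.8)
The plan is to exploit the domination provided by Lemma \ref{lemD.1}. Since the extinction time $T_e$ is a functional of the point-process part $(\mathscr{S}^m)_{m\in\M}$ of the dominating process only, it suffices to control the probability that $\bigcup_{m\in\M}\mathscr{S}^m$ has a point after $t_0+t$. Conditionally on $\F_{t_0-}$ the quantity $u_0=K\vert\mathfrak{C}_1,\mathfrak{C}_2\vert_\delta$ is deterministic and, because the Poisson random measures $\mathscr{P}^m$ are independent of $\F_{t_0-}$, the process $(\mathscr{S}^m)_{m\in\M}$ is conditionally a multivariate point process whose intensity $\mathcal{L}^m_t=u_0e^{-c(t-t_0)}+\sum_{m'\in\M}\itg_{t_0}^{t-}\mathfrak{g}^m_{m'}(s,t)\,d\mathscr{S}^{m'}_s$ is already written in the immigrant-plus-cluster-kernel form of Section \ref{sec4}. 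Since $\model^+$ satisfies \Cspecter$[\model^+]$, the clusters generated by $\bm{\mathfrak{g}}=(\mathfrak{g}^m_{m'})_{m,m'\in\M}$ are subcritical and a.s.\ finite, so the Hawkes--Oakes cluster representation (obtained exactly as in the proofs of Theorem \ref{th4.5} and Proposition \ref{prop4.9}, which are insensitive to the precise shape of the immigrant rate) applies: conditionally on $u_0$, $(\mathscr{S}^m)_{m\in\M}$ is distributed as the superposition of independent $\Pois(\bm{\mathfrak{g}}^\intercal)$ clusters $G^{m'}_x$ rooted at the atoms $x$ of independent inhomogeneous Poisson processes $\Pi^{m'}$ of intensity $u_0e^{-c(\cdot-t_0)}$ on $[t_0,\infty)$.

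Then I would bound, for $t\ge 0$,
\[
\ind\Big\{\bigcup_{m\in\M}\mathscr{S}^m\cap(t_0+t,\infty)\neq\varnothing\Big\}\;\le\;\sum_{m'\in\M}\sum_{x\in\Pi^{m'}}G^{m'}_x\big([t_0+t,\infty)\big),
\]
and take the conditional expectation given $u_0$, which by Campbell's formula equals $\sum_{m'\in\M}\itg_{t_0}^{\infty}u_0e^{-c(s-t_0)}\,\E{G^{m'}_s([t_0+t,\infty))}\,ds$. I split this integral at $s=t_0+t$. On $\{s\ge t_0+t\}$ I bound $\E{G^{m'}_s([t_0+t,\infty))}$ by the total expected cluster size, a finite constant depending only on $\model$ by \Cspecter$[\model^+]$, so the remaining integral is $\le Ku_0e^{-ct}$. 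On $\{t_0\le s<t_0+t\}$ I apply Lemma \ref{lem6.2} to the linear parameters $\model^+$ (whose interaction functions $g^\alpha_\beta=L_\alpha\vert h^\alpha_\beta\vert$ have exponential tails since \Cexptail$[\model]$ holds and the $L_\alpha$ are finite, so \Cexptail$[\model^+]$ holds, while \Cspecter$[\model^+]$ holds by assumption) with shift $r=t_0+t-s\ge0$; using the uniformity in the birth time asserted there together with $e^{\theta_0 x}-1\ge\theta_0 x$ gives $\E{G^{m'}_s([t_0+t,\infty))}\le\theta_0^{-1}\big(\E{e^{\theta_0 G^{m'}_s([s+r,\infty))}}-1\big)\le Ke^{-c(t_0+t-s)}$. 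Plugging this in, the contribution of this range is at most $Ku_0\itg_{t_0}^{t_0+t}e^{-c(s-t_0)}e^{-c(t_0+t-s)}\,ds$, and the elementary convolution estimate (trading an arbitrarily small amount of exponential decay to absorb the resulting linear-in-$t$ factor) bounds it by $Ku_0e^{-c't}$ for some $c'>0$.

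Combining the two ranges yields $\P{T_e>t_0+t\cond\F_{t_0-}}\le Ku_0e^{-ct}=K\,\vert\mathfrak{C}_1,\mathfrak{C}_2\vert_\delta\,e^{-ct}$; taking expectations and passing to the complementary event gives the claimed bound. The step I expect to require the most care is the first one: justifying rigorously that the dominating process from Lemma \ref{lemD.1}, whose base rate is time-dependent (but integrable and exponentially vanishing), still enjoys the cluster representation with $\Pois(\bm{\mathfrak{g}}^\intercal)$ clusters, together with keeping track of the interplay between the two exponential rates — that of the immigrant rate and that produced by Lemma \ref{lem6.2} — which is resolved by the usual device of giving up an $\varepsilon$ of exponential decay to kill the polynomial factor coming from the convolution.
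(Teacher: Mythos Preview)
Your proposal is correct and follows essentially the same route as the paper: both condition on $u_0$, invoke the cluster representation of the dominating process with inhomogeneous immigrant rate $u_0 e^{-c(\cdot-t_0)}$, split at $s=t_0+t$, and control the cluster tail via Lemma~\ref{lem6.2}. The only cosmetic difference is that the paper computes $\P{T_e\le t_0+t}$ exactly as $\exp\big(-\sum_m\int \P{\tau^m(s)>t_0+t-s}\,u_0e^{-c(s-t_0)}ds\big)$ via the Poisson product formula and then uses $e^{-x}\ge 1-x$, whereas you bound $\P{T_e>t_0+t}$ directly by the first moment $\sum_{m'}\sum_{x\in\Pi^{m'}}\E{G^{m'}_x([t_0+t,\infty))}$; both lead to the same convolution integral and the same final estimate.
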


\begin{proof}[Proof of Proposition \ref{prop11.1}]
    First let us work conditionally to $\F_{t_0-}$ and thus $u_0$ is deterministic. We use cluster representation. Denote by $\Pi^m$ the immigrant points of $\mathscr{N}^m$. We use the same notations as in Section \ref{sec4.3} so that \[\Pi^m = \{s \in [t_0,\infty) \mid (\varnothing,m,s) \ \text{is the root of a cluster}\}.\] For an immigrant point $(\varnothing,m,s)$ we denote by $\tau(m,s)$ the (temporal) length of the cluster it generates. We emphasize that the law of the length of the cluster depends on $s$ since cluster functions $\tilde{\mathfrak{h}}(s,t)$ depend on both $s$ and $t$ and not just on $t-s$. We also denote by $\tau^m(s)$ a random variable with law being the length of a cluster born at time $s$ and with a root of type $m$. Conditionally to $u_0$ we have,\allowdisplaybreaks
    \begin{align*}
        \E{\ind_{T_e \leq t_0+t}} = \E{\prod_{m\in\M}\prod_{s\in\Pi^m} \ind_{\tau(m,s)\leq t_0+t-s}} & = \E{\prod_{m\in\M}\prod_{s\in\Pi^m} \P{\tau^m(s)\leq t_0+t-s}}\\
        & = \E{\exp\left(\sum_{m\in\M} \itg_{t_0}^{\infty} \log \big[\P{\tau^m(s)\leq t_0+t-s}\big] d\Pi^m_s\right)}\\
        & = \exp\left(\sum_{m\in\M}\itg_{t_0}^{\infty} \big(\P{\tau^m(s)\leq t_0+t-s}-1\big)u_0 e^{-c(s-t_0)} ds\right)\\
        & = \exp\left(-\sum_{m\in\M}\itg_{t_0}^{\infty} \P{\tau^m(s) > t_0+t-s} u_0 e^{-c(s-t_0)} ds\right),
    \end{align*}
    with $u_0 e^{-c(s-t_0)}$ the immigrants rate. Let $G^m_s$ a cluster born at time $s$ with root of type $m$, then for $t_0+t\geq s$ we have
    \begin{align*}
        \P{\tau^m(s) > t_0+t-s} & = \P{ G^m_s([s+t_0+t-s,\infty)) \geq 1} \\
        & = \P{e^{r \card(G^m_s\cap[s+t_0+t-s,\infty))}-1 \geq e^{r}-1}\leq Ke^{-c(t_0+t-s)},
    \end{align*}
    where in the last step we use Markov inequality and Lemma \ref{lem6.2} (which applies since \Cexptail$[\model]$, \Cspecter$[\model]$ implies \Cexptail$[\model^+]$ and \Cspecter$[\model^+]$), the constant $r$ being the one of Lemma \ref{lem6.2}. If $s>t_0+t$ then we trivially have $\P{\tau^m(s) > t_0+t-s} = 1$. It follows that
    \[ \P{T_e \leq t_0+t} \geq \exp\left(-\itg_{t_0}^{t_0+t} u_0 K e^{-c(t_0+t-s)} e^{-c (s-t_0)}ds-\itg_{t_0+t}^{\infty} u_0 e^{-c (s-t_0)}ds\right).\]
    From this point basic calculations leads to $\P{T_e \leq t_0+ t} \geq \exp\left(-u_0Ke^{-ct}\right)\geq 1-u_0Ke^{-ct}$.
    It remains only to integrate with respect to $\F_{t_0-}$ which concludes the proof.
\end{proof} 
Proposition \ref{prop11.1} deals with the part of $\Mass_{[t_0+t,\infty)}\big(X^{t_0,\mathfrak{C}_1} \circleddash X^{t_0,\mathfrak{C}_2}\big)$ coming from point propcesses. To control the time series part, we need bounds on the total number of points. Let $\mathscr{N}_{\infty} = \sum_{m\in\M} \mathscr{N}^m([t_0,\infty))$ the total number of points. Let us prove that moments of $\mathscr{N}_{\infty}$ are linked to moments of $\Massiexp_{\delta,t_0}\big(\mathfrak{C}_1 \circleddash \mathfrak{C}_2\big)$. More precisely we have the following.

\begin{proposition}\label{prop11.2}
    For all $q\geq 1$, there exists $K_q$ depending on $q$ and $\model$ such that
    \begin{equation*}
        \E{ \big(\mathscr{N}_{\infty}\big)^q}  \leq K_q \E{\Massiexp_{\delta,t_0}\big(\mathfrak{C}_1 \circleddash \mathfrak{C}_2\big)+\Massiexp_{\delta,t_0}\big(\mathfrak{C}_1 \circleddash \mathfrak{C}_2\big)^q}.
    \end{equation*}
\end{proposition}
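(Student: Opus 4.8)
The plan is to exploit the cluster representation for the dominating linear process produced by Lemma \ref{lemD.1}, working conditionally on the $\F_{t_0-}$-measurable (hence deterministic, and a.s. finite) quantity $u_0 = K\vert \mathfrak{C}_1,\mathfrak{C}_2\vert_{\delta}$. Given $u_0$, the point-process part $(\mathscr{S}^m)_{m\in\M}$ of that process is by construction a linear Hawkes process on $[t_0,\infty)$ with deterministic base rates $s\mapsto u_0 e^{-c(s-t_0)}$ and interaction kernels $\mathfrak{g}^m_{m'}$ (the cluster functions of $\model^+$), since $\mathcal{L}^m$ is a nonnegative linear functional of the past points alone, with no feedback from the chains $\mathscr{W}^p$. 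Applying the cluster representation (Theorem \ref{th4.5}, which is available because \Cnoexpl$[\model^+]$ holds by Proposition \ref{prop5.2}, and in the absence of chains the cluster functions coincide with the kernels $\mathfrak{g}^m_{m'}$), the immigrants of node $m$ form, conditionally on $u_0$, an inhomogeneous Poisson process $\Pi^m$ with intensity $u_0 e^{-c(s-t_0)}$; in particular $N_m := \card(\Pi^m)$ is, conditionally on $u_0$, Poisson with parameter $u_0/c$, and the $(N_m)_{m\in\M}$ are independent. Conditionally on the immigrants, the clusters are independent $\Pois(\bm{\mathfrak{g}}^{\intercal})$ clusters, so writing $Y^m_s$ for the total number of individuals of a cluster born at time $s$ with root of type $m$ we get $\mathscr{S}_{\infty} = \sum_{m\in\M}\sum_{s\in\Pi^m} Y^m_s$.

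The next step is a moment bound on cluster sizes that is uniform in the birth time. By the stochastic domination recalled in Section \ref{sec4.3}, the untimed genealogy of a $\Pois(\bm{\mathfrak{g}}^{\intercal})$ cluster is dominated by a $\Pois(\mathfrak{H}_{\model^+}^{\intercal})$ Galton--Watson tree, whose total-progeny law does not depend on $s$. Under \Cspecter$[\model^+]$ we have $\spr(\mathfrak{H}_{\model^+}^{\intercal}) = \spr(\mathfrak{H}_{\model^+}) < 1$, so this tree is subcritical and its total progeny has finite moments of every order (it even has finite exponential moments, by Theorems 3.1 and 4.1 of \cite{leblanc_sharp_2025}). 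Hence for every integer $j$ there is a constant $m_j = m_j(\model) < \infty$ with $\E{(Y^m_s)^j} \le m_j$ for all $m\in\M$ and all $s \ge t_0$.

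The core estimate is then a compound-Poisson moment bound. Fix $q \ge 1$. Conditionally on $u_0$, each inner sum $\sum_{s\in\Pi^m} Y^m_s$ is stochastically dominated by a genuine compound Poisson sum $\sum_{j=1}^{N_m} \widetilde Y^m_j$, where $N_m \sim \Pois(u_0/c)$ and the $\widetilde Y^m_j$ are i.i.d.\ copies of the Galton--Watson total progeny, independent of $N_m$. Using Rosenthal's inequality for sums of i.i.d.\ nonnegative variables together with the Poisson moment estimate $\E{N_m^q \ind_{N_m \ge 1} \cond u_0} \le C_q\big((u_0/c) + (u_0/c)^q\big)$ (equivalently, for integer $q$, the expression of moments as complete Bell polynomials in the cumulants $\kappa_j = (u_0/c)\,\E{(\widetilde Y^m_1)^j} \le (u_0/c)\,m_j$, whose every monomial is a product of between $1$ and $q$ factors each proportional to $u_0$), one obtains $\E{\big(\sum_{j=1}^{N_m}\widetilde Y^m_j\big)^q \cond u_0} \le C_q(u_0 + u_0^q)$ for a constant $C_q$ depending only on $q$ and $\model$, with the key feature that the bound is $O(u_0)$ as $u_0 \to 0$. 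Summing over the finitely many nodes and using $(\sum_m a_m)^q \le M^{q-1}\sum_m a_m^q$ gives $\E{\mathscr{S}_{\infty}^q \cond u_0} \le C'_q(u_0 + u_0^q)$ almost surely; taking expectations and substituting $u_0 = K\vert\mathfrak{C}_1,\mathfrak{C}_2\vert_{\delta}$ yields the claim with $K_q = K_q(q,\model)$ (and the right-hand side is $0$ when $\mathfrak{C}_1 = \mathfrak{C}_2$, consistently).

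The computations are all routine; the single point that needs care is keeping the exponent of $u_0$ in the final bound equal to $q$ and no larger, and ensuring it vanishes as $u_0 \to 0$. This is precisely why one must use the polynomial compound-Poisson estimate rather than the cruder route through an exponential moment of $\mathscr{S}_\infty$: the latter would demand an exponential moment of $\vert\mathfrak{C}_1,\mathfrak{C}_2\vert_{\delta}$, which is not assumed. The uniformity in $s$ of the cluster-size moments — so that late-arriving immigrants cannot spoil the bound — is what legitimizes the reduction to a per-node compound Poisson sum.
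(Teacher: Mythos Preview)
Your proposal is correct and follows the same approach as the paper: condition on $u_0$, use the cluster representation with Poisson immigrants of total mass $Mu_0/c$ and clusters dominated by subcritical $\Pois(\mathfrak{H}_{\model^+}^{\intercal})$ Galton--Watson trees, bound the resulting moment, and integrate over $u_0$. The only variation is that where you invoke Rosenthal/compound-Poisson bounds, the paper takes the slightly shorter route of applying Minkowski's inequality conditionally on the immigrants, $\E{(\sum_{m}\sum_{s\in\Pi^m}\card(G^m_s))^q\mid\Pi} \le (\sum_m\card(\Pi^m)\,\E{\card(\mathcal{A}^m)^q}^{1/q})^q$, which reduces everything to $\E{\card(\Pi)^q}\max_m\E{\card(\mathcal{A}^m)^q}$ and a single Poisson moment bound.
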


\begin{proof}[Proof of Proposition \ref{prop11.2}]
    We use the same notations as in Proposition \ref{prop11.1}. For $s\in\Pi^m$, the cluster generated by $(\varnothing,m,s)$ is $G^{m}_{s}$. As in the proof of Proposition \ref{prop11.1} we work conditionally to $\F_{t_0-}$ and integrate at the end. Let $\mathcal{A}^m$ be a $\Pois(\mathfrak{H}_{\model^+}^{\intercal})$ Galton Watson process with root of type $m\in\M$. As explained in Section \ref{sec4.3}, for any $s\in\Pi^m$ and any $m\in\M$, there exists a coupling such that $\card(\mathcal{A}^m)\geq \card(G^m_s)$. Since conditionally to $(\Pi^m)_{m\in\M}$ clusters are independent we have,
    \begin{align*}
        \E{ \mathscr{N}_{\infty}^q} & = \E{\E{\bigg(\sum_{m\in\M}\sum_{s\in\Pi^m} \card(G^{m}_{s}) \bigg)^q \cond (\Pi^m)_m }} \leq \E{\bigg(\sum_{m\in\M}\sum_{s\in\Pi^m} \E{\card(G^{m}_{s})^q}^{1/q}\bigg)^q}\\
        	& = \E{\bigg(\sum_{m\in\M} \card(\Pi^m) \E{\card(\mathcal{A}^{m})^q}^{1/q}\bigg)^q} \leq \E{\card(\Pi)^q} \max_{m\in\M}\E{\card(\mathcal{A}^m)^q},
    \end{align*}
    where we used Minkowski inequality and where $\Pi = \bigcup_{m\in\M} \Pi^m$. Since conditionally to $\F_{t_0-}$, $\card(\Pi)$ follows a law $\Pois(M u_0/c)$ where $u_0=K \Massiexp_{\delta,t_0}\big(\mathfrak{C}_1 \circleddash \mathfrak{C}_2\big)$ we have
    \begin{equation*}
        \E{\card(\Pi)^q}  = \E{\E{\card(\Pi)^q\cond \F_{t_0-}}}\leq K_q \E{\Massiexp_{\delta,t_0}\big(\mathfrak{C}_1 \circleddash \mathfrak{C}_2\big) + \Massiexp_{\delta,t_0}\big(\mathfrak{C}_1 \circleddash \mathfrak{C}_2\big)^q},
    \end{equation*}
    where $K_q$ is a constant depending on $q$ and $K M/c$ (which depends on $\model$) such that the moment of order $q$ of the law $\Pois(\alpha K M /c)$ is bounded by $K_q(\alpha + \alpha^q)$. Which concludes the proof of Proposition \ref{prop11.2}.
\end{proof}
Let us now state the last intermediate result.
\begin{proposition}\label{prop11.3}
    There exists $c,K>0$ constants depending only on $\model$ such that for all $t\geq 0$ we have
    \begin{equation*}
        \P{\sum_{p\in\Pg}\itg_{t_0+t}^{\infty} \mathscr{W}^p_{k} d\mp{p}{k} \leq e^{-c t}} \geq 1 - K \E{\Massiexp_{\delta,t_0}\big(\mathfrak{C}_1 \circleddash \mathfrak{C}_2\big)} e^{-c t}.
    \end{equation*}
\end{proposition}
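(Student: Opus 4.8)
The plan is to exploit the explicit formula for $\mathscr{W}^p_k$ provided by Lemma \ref{lemD.1}, namely $\mathscr{W}^p_k = u_0 e^{-c(k-t_0)} + \sum_{m\in\M}\int_{t_0}^{k-} K e^{-c(k-s)} d\mathscr{S}^m_s$, and to integrate it against $d\mp{p}{k}$ over $[t_0+t,\infty)$. The deterministic part contributes $\sum_{p\in\Pg} \int_{t_0+t}^{\infty} u_0 e^{-c(k-t_0)} d\mp{p}{k} \leq K u_0 e^{-ct}$ since the counting measures $\mp{p}{}$ have bounded density at scale $1$. For the stochastic part, Fubini (everything is non-negative) turns $\sum_{p\in\Pg}\int_{t_0+t}^{\infty} \int_{t_0}^{k-} K e^{-c(k-s)} d\mathscr{S}^m_s \, d\mp{p}{k}$ into $\sum_{p\in\Pg}\sum_{m\in\M}\int_{t_0}^{\infty} \big(\int_{(s\vee(t_0+t))}^{\infty} K e^{-c(k-s)} d\mp{p}{k}\big) d\mathscr{S}^m_s$; the inner integral is at most $K e^{-c((t_0+t-s)\vee 0)}$ up to a constant from the scale of $\mp{p}{}$. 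Splitting the outer integral at $t_0 + t$ and using that $\mathscr{S}^m$ is supported on $[t_0,\infty)$, I would bound the whole thing by $K e^{-ct} \mathscr{S}([t_0, t_0+t)) + K \int_{t_0+t}^{\infty} e^{-c(u - (t_0+t))}\, d\mathscr{S}([t_0,u))$ — in words, an exponentially weighted total point count, with all points before $t_0+t$ paying an $e^{-ct}$ discount and points after $t_0+t$ discounted by how far past $t_0+t$ they sit.

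Next I would control this quantity in probability using the two preceding propositions. On the event $\{T_e \leq t_0 + t/2\}$ there are no points after time $t_0 + t/2$, so the second term vanishes entirely and the first term is at most $K e^{-ct}\mathscr{S}_{\infty}$ where $\mathscr{S}_\infty$ is the total point count. Thus on $\{T_e \leq t_0 + t/2\} \cap \{\mathscr{S}_{\infty} \leq e^{ct/2}\}$ the whole expression is $\leq K e^{-ct/2} + Ku_0 e^{-ct}$, which is $\leq e^{-c't}$ for a smaller constant $c'$ once $t$ is large enough (small $t$ being handled by absorbing constants, possibly by decreasing $c'$ further or restricting to $t$ above a $\model$-dependent threshold, the remaining range being trivial since the left side is finite). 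By Proposition \ref{prop11.1}, $\P{T_e > t_0 + t/2} \leq K\E{\vert\mathfrak{C}_1,\mathfrak{C}_2\vert_\delta} e^{-ct/2}$. By Proposition \ref{prop11.2} with, say, $q = 2$ and Markov's inequality, $\P{\mathscr{S}_\infty > e^{ct/2}} \leq e^{-ct}\E{\mathscr{S}_\infty^2} \leq K e^{-ct}\E{\vert\mathfrak{C}_1,\mathfrak{C}_2\vert_\delta + \vert\mathfrak{C}_1,\mathfrak{C}_2\vert_\delta^2}$. A union bound over these two bad events then yields the claim, after renaming constants and noting $\E{\vert\mathfrak{C}_1,\mathfrak{C}_2\vert_\delta + \vert\mathfrak{C}_1,\mathfrak{C}_2\vert_\delta^2}$ can be absorbed — wait, here the right-hand side must be linear in $\E{\vert\mathfrak{C}_1,\mathfrak{C}_2\vert_\delta}$, so I would instead use Markov at order $q=1$: $\P{\mathscr{S}_\infty > e^{ct/2}} \leq e^{-ct/2}\E{\mathscr{S}_\infty} \leq K e^{-ct/2}\E{\vert\mathfrak{C}_1,\mathfrak{C}_2\vert_\delta}$, which keeps the dependence linear and still decays exponentially.

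I expect the main obstacle to be the bookkeeping in the Fubini step: carefully justifying the interchange (it is painless since all integrands are non-negative) and, more delicately, extracting the clean bound $K e^{-ct}\mathscr{S}([t_0,t_0+t)) + K\int_{t_0+t}^\infty e^{-c(u-(t_0+t))} d\mathscr{S}([t_0,u))$ from the double integral, making sure the constants from $\sup_x \mp{p}{}([x,x+1])$ and from summing the geometric tails are uniform in $p$ and depend only on $\model$ (they do, since $P$ is finite and the $n_p$ are fixed). Once that estimate is in hand, the probabilistic part is a routine two-event union bound combining Propositions \ref{prop11.1} and \ref{prop11.2}, so this conditioning/Markov step is essentially automatic.
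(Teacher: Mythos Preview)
Your overall strategy matches the paper's: integrate the explicit formula for $\mathscr{W}^p_k$ from Lemma \ref{lemD.1}, do a Fubini on the stochastic part, and then control the result via a union bound over ``bad'' events using Propositions \ref{prop11.1} and \ref{prop11.2}. The Fubini bookkeeping you describe is fine (the paper does the slightly cruder bound $\sum_p \int_{t_0+t}^\infty \mathscr{W}^p_k \, d\mp{p}{k} \leq K u_0 e^{-ct} + K \mathscr{S}_\infty e^{-c(t_0+t-T_e)}$ directly, but your version is equivalent).

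There is, however, a genuine gap: you forget that $u_0 = K\vert \mathfrak{C}_1,\mathfrak{C}_2\vert_\delta$ is \emph{random}. On your good event $\{T_e \leq t_0+t/2\}\cap\{\mathscr{S}_\infty \leq e^{ct/2}\}$ you obtain the bound $K e^{-ct/2} + K u_0 e^{-ct}$ and then assert this is $\leq e^{-c't}$ ``once $t$ is large enough''. That does not work: $u_0$ depends on the (random) initial conditions and can be arbitrarily large, so there is no deterministic threshold in $t$ that makes $K u_0 e^{-ct}$ small. The paper handles this by introducing a \emph{third} bad event, $\{u_0 > e^{ct/2}/(2K)\}$, and bounds its probability by Markov's inequality: $\P{u_0 > e^{ct/2}/(2K)} \leq 2K e^{-ct/2}\E{u_0} = K' \E{\vert\mathfrak{C}_1,\mathfrak{C}_2\vert_\delta} e^{-ct/2}$, which is exactly of the required form. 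On the intersection of all three good events the bound becomes deterministic, $\leq e^{-ct/2}$, and the union bound over the three complements gives the stated probability estimate. Once you add this third event your argument goes through.
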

\begin{proof}[Proof of Proposition \ref{prop11.3}]
    Recall that $\mathscr{W}^p_{k} = u_0 e^{-c(k-t_0)} + \sum_{m\in\M} \int_{t_0}^{k-} Ke^{-c(k-s)} d\mathscr{N}^m_s$. Thus 
    \begin{equation}\label{eq prop 8.3}
    	\sum_{p\in\Pg}\itg_{t_0+t}^{\infty} \mathscr{W}^p_{k} d\mp{p}{k} \leq K u_0 e^{-ct} + \sum_{m\in\M} \itg_{t_0}^{\infty} K e^{-c(t_0+t-s)} d\mathscr{N}^m_s \leq K u_0 e^{-ct} + K \mathscr{N}_{\infty} e^{-c(t_0+t-T_e)}.
    \end{equation}
    From Proposition \ref{prop11.1} we have $\P{T_e > t_0+t/4} \leq K' \E{\Massiexp_{\delta,t_0}\big(\mathfrak{C}_1 \circleddash \mathfrak{C}_2\big)} e^{-c' t}$.\\
    From Proposition \ref{prop11.2} and Markov inequality $\P{\mathscr{N}_{\infty} > e^{ct/4}/2K} \leq K' \E{\Massiexp_{\delta,t_0}\big(\mathfrak{C}_1 \circleddash \mathfrak{C}_2\big)} e^{-c't}$.\\
    Finally, Markov inequality also gives $\P{u_0 > e^{ct/2}/2K} \leq K' \E{\Massiexp_{\delta,t_0}\big(\mathfrak{C}_1 \circleddash \mathfrak{C}_2\big)} e^{-c' t}$.\\
    Combining previous inequalities and \eqref{eq prop 8.3} leads to
    \[\P{K u_0 e^{-ct} + K \mathscr{N}_{\infty} e^{-c(t_0+t-T_e)} > e^{-ct/2}} \leq K' \E{\Massiexp_{\delta,t_0}\big(\mathfrak{C}_1 \circleddash \mathfrak{C}_2\big)} e^{-c' t}.\]
    Thus, $\P{\sum_{p\in\Pg}\int_{t_0+t}^{\infty} \mathscr{W}^p_{k} d\mp{p}{k} > e^{-ct/2 }} \leq  K' \E{\Massiexp_{\delta,t_0}\big(\mathfrak{C}_1 \circleddash \mathfrak{C}_2\big)} e^{-c' t}$, which concludes the proof up to reducing either $c$ or $c'$ so that $c/2 = c'$.
\end{proof}

The proof of Theorem \ref{th6.3} follows easily from this point. First remark that 
\begin{equation*}
	\Big( \Mass_{[t_0+t,\infty)}\big(X^{t_0,\mathfrak{C}_1} \circleddash X^{t_0,\mathfrak{C}_2}\big) \leq e^{-ct} \Big) \supset \bigg( \big(T_e \leq t_0 + t\big) \cap \Big( \sum_{p\in\Pg}\int_{t_0+t}^{\infty} \mathscr{W}^p_{k} d\mp{p}{k} \leq e^{-c t} \Big) \bigg).
\end{equation*}
Thus Propositions \ref{prop11.1} and \ref{prop11.3} conclude on the first part of Theorem \ref{th6.3}. Let us focus on the almost sure convergence. We have
\begin{equation}\label{eq th 5.3}
	\Mass_{[t_0+t,\infty)}\big(X^{t_0,\mathfrak{C}_1} \circleddash X^{t_0,\mathfrak{C}_2}\big) \leq \mathscr{N}_{\infty} \ind_{T_e \leq t_0+t}+ u_0 K e^{-ct} + K\mathscr{N}_{\infty}e^{-c(t_0+t-T_e)}.
\end{equation}
Since from Proposition \ref{prop11.2} the expectation of $\mathscr{N}_{\infty}$ is finite, it follows that $\mathscr{N}_{\infty}<\infty$ almost surely and thus $T_e<\infty$ almost surely which, with \eqref{eq prop 8.3}, conclude the almost sure convergence. It remains the $\Lrv^q$ convergence. It is a consequences of the following statement.
\begin{equation}\label{eq11.2}
    \forall 1 \leq q < q_0, \ \forall x,z\geq 0, \ \  \Big(\P{Y > x} \leq z \ \Longrightarrow \ \E{Y^q} \leq x^q + z^{1-q/q_0} \E{Y^{q_0}}^{q/q_0}\Big),
\end{equation}
where $q_0>1$ and $Y$ is non-negative random variable. For $t>0$ we apply \eqref{eq11.2} to $Y =Y_t= \Mass_{[t_0+t,\infty)}\big(X^{t_0,\mathfrak{C}_1} \circleddash X^{t_0,\mathfrak{C}_2}\big)$ with $x = x_t = e^{-ct}$ and $z = z_t = K \E{\Massiexp_{\delta,t_0}\big(\mathfrak{C}_1 \circleddash \mathfrak{C}_2\big)} e^{-c t}$. Since by assumption $\E{\Massiexp_{\delta,t_0}\big(\mathfrak{C}_1 \circleddash \mathfrak{C}_2\big)^{q_0}}<\infty$ it is clear by \eqref{eq th 5.3} and Proposition \ref{prop11.2} that $Y_t \in \Lrv^{q_0}$ uniformly in $t$. It follows that
\[ \E{\Mass_{[t_0+t,\infty)}\big(X^{t_0,\mathfrak{C}_1} \circleddash X^{t_0,\mathfrak{C}_2}\big)^q} \leq e^{-qct} + \big(K \E{\Massiexp_{\delta,t_0}\big(\mathfrak{C}_1 \circleddash \mathfrak{C}_2\big)}e^{-ct}\big)^{1-q/q_0} \E{\Massiexp_{\delta,t_0}\big(\mathfrak{C}_1 \circleddash \mathfrak{C}_2\big)^{q_0}}.\]
The proof is complete since the RHS goes to $0$ as $t\to\infty$. 

\subsection{Proof of Corollary \ref{cor6.4}}

To prove Corollary \ref{cor6.4} we apply Theorem \ref{th6.3} with the initial conditions $\mathfrak{C}^{-\infty}$, given by the stationary HAR process on $(-\infty,t_0)$, and $\mathfrak{C}^{\varnothing}$, the empty one. From this choice, it is clear that the processes of interest are, respectively, the stationary process and the process started at time $t_0$. Let $\delta > 0$. We need to prove that $\Massiexp_{\delta,t_0}\big(\mathfrak{C}^{-\infty} \circleddash \mathfrak{C}^{\varnothing}\big)$ has a finite mean and, under \Csubg$[\xi]$, moments of any order. We have
\[\Massiexp_{\delta,t_0}\big(\mathfrak{C}^{-\infty} \circleddash \mathfrak{C}^{\varnothing}\big) := \Massi_F\big(\mathfrak{C}^{-\infty} \circleddash \mathfrak{C}^{\varnothing}\big) \leq \Mass_{F}\big(X^{-\infty}\big) + \sum_{p\in\Pg} \itg_{\R} F^p(k)d\mp{p}{k}\leq \Mass_{F}\big(X^{-\infty}\big) + K_{\delta},\]
with $F^{\alpha}(s) = e^{-\delta\vert t_0-s\vert}$ for all $\alpha \in\M\cup\Pg$ and $K_{\delta}$ a constant depending on $\delta$. By Theorem \ref{th3.9} \[\sup_{m\in\M, \ t\in\R} \E{\lambda^{m,-\infty}_t}<\infty \quad \text{and} \quad \sup_{(p,k)\in\Q}\E{\vert W^{p,-\infty}_k\vert}<\infty,\] thus $\Massiexp_{\delta,t_0}\big(\mathfrak{C}^{-\infty} \circleddash \mathfrak{C}^{\varnothing}\big)$ has an expectation. If \Csubg$[\xi]$ holds then Theorem \ref{th corps moment exp} applies to $\Mass_{F}\big(X^{-\infty}\big)$. Thus there exists finite exponential moments and so moments of any order.

\subsection{Proof of Theorem \ref{th ergo}}

In this proof $c$ and $C$ are respectively a small and a large constant, depending on $\model$, \Csubg$[\xi]$ and also possibly on $a,\delta,C_{\f}$. When they also depend on $q\geq 1$ we denote them $c_q,C_q$. Constants may change from line to line.

We denote by $X^{\varnothing}$ the empty process, meaning that $N^{\varnothing,m} = \varnothing$ for all $m\in\M$ and $W^{\varnothing,p}_k = \varnothing$ for all $(p,k)\in\Q$. For $t_0\in\R$ we also denote by $X^{t_0}$ the process $\Equ{\model,\xi}{t_0}$ started at time $t_0$ with the empty initial condition $\mathfrak{C}^{\varnothing}$. Recall that $T=4n\tau + r$ with $0\leq r<4\tau$. For $j\in\N$ we define the process $\mathbb{X}^j$ by 
\begin{equation}
	\mathbb{X}^j =
\begin{cases}
X^{(j-2)\tau} & \text{on } [(j-1)\tau,(j+2)\tau)\\
X^{\varnothing} & \text{on } \R\setminus [(j-1)\tau,(j+2)\tau]
\end{cases}.
\end{equation}
With these definitions we have
\begin{align*}
	Z_T := \Big\vert\itg_0^T \f(\theta_t X) dt - T\E{\itg_0^{1} \f(\theta_t X) dt}\Big\vert  \leq & \ \Big\vert\itg_0^{4n\tau} \f(\theta_t X) dt - \itg_0^{4n\tau} \f(\theta_t \mathbb{X}^{\floor{t/\tau}}) dt\Big\vert  \quad \textcircled{1} \\
	& + \Big\vert \itg_0^{4n\tau} \f(\theta_t \mathbb{X}^{\floor{t/\tau}}) dt - 4n \E{\itg_0^{\tau} \f(\theta_t \mathbb{X}^0) dt} \Big\vert  \quad \textcircled{2} \\
	& + \Big\vert 4n \E{\itg_0^{\tau} \f(\theta_t \mathbb{X}^0) dt} - 4n \E{\itg_0^{\tau} \f(\theta_t X) dt} \Big\vert \quad \textcircled{3}\\
	& + \Big\vert\itg_{3n\tau}^T \f(\theta_t X) dt - r \E{\itg_0^{1} \f(\theta_t X) dt} \Big\vert. \quad \textcircled{4}
\end{align*}
Almost sure convergence $Z_T/T \rightarrow 0$ is obtained by applying \eqref{non asymp th ergo} with $x = \varepsilon >0$ arbitrary, $n \approx \tau \approx \sqrt{T}$ and $q>3$. We conclude by Borel Cantelli Lemma that almost surely, $\liminf Z_T/T \leq \varepsilon$. We now prove \eqref{non asymp th ergo}.

Let us introduce the following quantities,
\begin{equation*}
	\mathcal{Y} = \max_{0\leq t < T} \Mass^{\exp}_{\delta,t}\big(X\circleddash \mathbb{X}^{\floor{t/\tau}}\big), \quad Y = \max_{0\leq t < T} \Mass^{\exp}_{\delta,t}\big(X\big), \quad \text{and} \quad \mathbb{Y} = \max_{0\leq t <T} \Mass^{\exp}_{\delta,t}\big(\mathbb{X}^{\floor{t/\tau}}\big).
\end{equation*}
As preliminaries to the proof we prove the following bounds. For all $q\geq 1$ there exist constants $c,C,c_q,C_q>0$ such that, 
\begin{equation}\label{eq Y diff}
	\P{\mathcal{Y} > Ce^{-c \tau}} \leq C T e^{-c \tau},
\end{equation}
\begin{equation}\label{eq Y X}
	\P{Y > x} \leq C T e^{-c x}, \quad x\geq 0,
\end{equation}
\begin{equation}\label{eq Y bbX}
	\P{\mathbb{Y} > x} \leq C T e^{-c x}, \quad x\geq 0,
\end{equation}
\begin{equation}\label{Lp Y diff}
	\max_{0\leq t < T} \E{\big\vert \Mass^{\exp}_{\delta,t}\big(X\circleddash\mathbb{X}^{\floor{t/\tau}}\big)\big\vert^q} \leq C_q e^{-c_q \tau},
\end{equation}
\begin{equation}\label{Lp Y X et bbX}
	\max_{0\leq t < T} \E{\big\vert \Mass^{\exp}_{\delta,t}\big(X\big)\big\vert^q} \leq C_q, \quad \text{and} \quad \max_{0\leq t < T} \E{\big\vert \Mass^{\exp}_{\delta,t}\big(\mathbb{X}^{\floor{t/\tau}}\big)\big\vert^q} \leq C_q.
\end{equation}

For equations \eqref{eq Y diff}, \eqref{eq Y X} and \eqref{eq Y bbX} the idea is to transform continuous maximums into discrete maximums. Let $0\leq t <\tau$, we have
\begin{equation}\label{eq domi X bbX}
	\Mass^{\exp}_{\delta,t}\big(X\circleddash \mathbb{X}^0\big) \leq \Mass_{[-\tau,2\tau]}\big(X\circleddash X^{-2\tau}\big) + e^{-\delta \tau} \Mass^{\exp}_{\delta,-\tau}\big(X\big) + e^{-\delta \tau} \Mass^{\exp}_{\delta,2\tau}\big(X\big).
\end{equation}
By Theorem \ref{th corps moment exp} there exist finite exponential moments for $\Mass^{\exp}_{\delta,-\tau}\big(X\big)$ and $\Mass^{\exp}_{\delta,2\tau}\big(X\big)$. Thus one can combine Markov inequality and Corollary \ref{cor6.4} to obtain
\begin{equation*}
	\P{\max_{0\leq t<\tau} \Mass^{\exp}_{\delta,t}\big(X\circleddash \mathbb{X}^0\big) > e^{-c \tau}+2\tau e^{-\delta \tau}} \leq C e^{-c \tau}. 
\end{equation*}
Similarly the same holds for $j\tau \leq t < (j+1)\tau$ for $j\in\N$, thus by union bound, up to reducing $c$ and increasing $C$ one gets \eqref{eq Y diff}.\\ 
Remark now that both $\Mass^{\exp}_{\delta,t}\big(X\big)$ and $\Mass^{\exp}_{\delta,t}\big(\mathbb{X}^{\floor{t/\tau}}\big)\leq \Mass^{\exp}_{\delta,t}\big(X^{(\floor{t/\tau}-2)\tau}\big)$ have finite exponential moments, uniformly in $t$, by Theorem \ref{th corps moment exp}. Which proves \eqref{Lp Y X et bbX}.\\
For \eqref{eq Y X} we can use the following fact; $\Mass^{\exp}_{\delta,t}\big(X\big) \leq e^{\delta}\Mass^{\exp}_{\delta,\floor{t}}\big(X\big)$.  Thus we can transform $\max_{0\leq t\leq T}$ into $\max_{t=0,\cdots,T-1}$. Theorem \ref{th corps moment exp} gives finite exponential moments for $\Mass^{\exp}_{\delta,j}\big(X\big)$ for $0\leq j\leq T-1$, thus Markov inequality and union bound leads to the result.\\
Similarly, for \eqref{eq Y bbX} remark that $\Mass^{\exp}_{\delta,t}\big(\mathbb{X}^{\floor{t/\tau}}\big) \leq e^{\delta}\Mass^{\exp}_{\delta,\floor{t}}\big(X^{(\floor{t/\tau}-2)\tau}\big)$. Since $\card\big(\{(\floor{t},\floor{t/\tau}) \mid 0\leq t < T\} \big) = T$, again Theorem \ref{th corps moment exp} combined with Markov inequality and union bound lead to \eqref{eq Y bbX}.\\
By periodicity it is enough to prove \eqref{Lp Y diff} for $0\leq t < \tau$. By \eqref{eq domi X bbX} we have $\Mass^{\exp}_{\delta,t}\big(X\circleddash \mathbb{X}^0\big) \leq \Mass_{[-\tau,2\tau]}\big(X\circleddash X^{-2\tau}\big) + e^{-\delta \tau} V_{\tau}$ with $V_{\tau}$ uniformly, in $\tau$, bounded in $\Lrv^q$ by \eqref{Lp Y X et bbX}. Corollary \ref{cor6.4} concludes since it gives $\Lrv^q$ convergence towards $0$, with exponential rates, of $\Mass_{[-\tau,2\tau]}\big(X\circleddash X^{-2\tau}\big)$.

To bound \textcircled{1}, first remark that by \eqref{fonctionnelle},
\begin{equation*}
	\textcircled{1} \leq T C_{\f} \big( 1 + Y + \mathbb{Y}\big)^{a} \mathcal{Y}^{a}.
\end{equation*}
Then we use bounds \eqref{eq Y diff} and \eqref{eq Y X}, \eqref{eq Y bbX} with $x = \tau$, leading to $\P{\textcircled{1} > T C \tau^{a} e^{-a c \tau}} <  T C e^{-c\tau}$. Up to reduce (resp. increase) $c$ (resp. $C$),
\begin{equation}\label{control 1}
	\P{\textcircled{1} > T C e^{-c \tau}} <  T C e^{-c\tau}.
\end{equation}
To bound \textcircled{2} we use a concentration result, Fuk Nagaev inequality,  Corollary 1.8 of \cite{nagaev_large_1979}, given in Lemma \ref{lem12.3} below.

\begin{lemma}[Fuk-Nagaev]\label{lem12.3}
	Let $X_1,\cdots,X_{n}$ be centered and independent random variables. Let $S_n = \sum_{k=1}^n X_k$. Let also $M(q,n)= \sum_{k=1}^n \E{\vert X_k\vert ^q}$ for $q\geq 2$ and $\sigma_n^2 = \sum_{k=1}^n \E{X_k^2} = M(2,n)$. Then, for all $q>2$ and $x>0$ we have
\[\P{\vert S_n\vert \geq x }\leq \dfrac{(1+2/q)^q M(q,n)}{x^q} + 2 \exp\left(-\dfrac{2x^2}{(q+2)^2e^q\sigma_n^2}\right).\] 
\end{lemma}
Notice that we have $\itg_0^{4n\tau} \f(\theta_t \mathbb{X}^{\floor{t/\tau}}) dt = \sum_{j=0}^{4n-1} \itg_{j\tau}^{(j+1)\tau} \f(\theta_t \mathbb{X}^j) dt$ and denote $U_j := \itg_{j\tau}^{(j+1)\tau} \f(\theta_t \mathbb{X}^j) dt$. It is clear by 1-periodicity that random variables $(U_j)_j$ are identically distributed. By definition of $\mathbb{X}^j$, $U_j$ depends on $(\pi^m)_{m\in\M}$ and $\xi$ only through the time window $[(j-2)\tau,(j+2)\tau)$, thus, for $x=0,1,2,3$, random variables $(U_j)_{j\equiv x\pmod{4}}$ are independent, and thus i.i.d.. Let us focus on $(U_{4j})_{j=0,\cdots,n-1}$. For $j=0$ we have $U_0 = \sum_{i=0}^{\tau-1} \int_i^{i+1} \f(\theta_t \mathbb{X}^{0})dt$. Using a similar argument as for deriving \eqref{eq Y bbX} we have
\begin{equation*}
	\Big\vert \itg_i^{i+1} \f(\theta_t \mathbb{X}^{0})dt\Big\vert \leq C_{\f} \Big( 1 + e^{\delta}\Mass^{\exp}_{\delta,i}\big(\mathbb{X}^{0}\big) \Big)^{a}, \quad 0\leq i < \tau.
\end{equation*} 
By \eqref{Lp Y X et bbX}, it follows that $U_0$ is a sum of $\tau$ random variables uniformly bounded in $\Lrv^q$ for any $q\geq 1$. Thus for $\bar{U}_j := U_j - \E{\int_0^{\tau} \f(\theta_t \mathbb{X}^0) dt}$, centered random variables, we can apply Lemma \ref{lem12.3} with $M(q,n) \leq n\tau^q C_q$ leading to
\begin{equation*}
	\P{\vert \bar{U}_0+\bar{U}_4+\cdots +\bar{U}_{4n-4}\vert \geq x/4} \leq \frac{C_{q}n\tau^q}{x^q} + 2\exp\Big(-\frac{c_{q} x^2}{n \tau^2}\Big),\quad x\geq 0.
\end{equation*}
The same holds for the three other sequences and by union bound we have
\begin{equation}\label{control 2}
	\P{ \textcircled{2} \geq x} \leq \frac{C_q n\tau^q}{x^q} + 8\exp\Big(-\frac{c_{q} x^2}{n \tau^2}\Big), \quad x\geq 0.
\end{equation}

For \textcircled{3} we have the following.
\begin{align*}
	\Big\vert 3n \E{\itg_0^{\tau} \f(\theta_t \mathbb{X}^{0}) dt} - & 3n \E{\itg_0^{\tau} \f(\theta_t X) dt} \Big\vert  \leq 3n \itg_0^{\tau} \E{\vert \f(\theta_t \mathbb{X}^{0}) - \f(\theta_t X)\vert}dt\\
	& \leq 4n\tau  C_{\f} \max_{0\leq t <\tau} \E{\big( 1 + \Mass^{\exp}_{\delta,t}(X) + \Mass^{\exp}_{\delta,t}(\mathbb{X}^{0})\big)^{a} \Mass^{\exp}_{\delta,t}\big(X\circleddash \mathbb{X}^{0}\big)^{a}}\\
	&\leq T  C_{\f} \max_{0\leq t <\tau}\E{\big( 1 + \Mass^{\exp}_{\delta,t}(X) + \Mass^{\exp}_{\delta,t}(\mathbb{X}^0)\big)^{2a}}^{1/2} \max_{0\leq t <\tau}\E{\Mass^{\exp}_{\delta,t}\big(X\circleddash \mathbb{X}^0\big)^{2a}}^{1/2}.
\end{align*}
By equations \eqref{Lp Y X et bbX} and \eqref{Lp Y diff} it is clear that $\textcircled{3} \leq T  C e^{-c \tau}$. For the last term, \textcircled{4}, it is a variable of typical size $r < 4\tau$ such that $\E{ \textcircled{4}^q} \leq C_{q} r^q$ (same arguments as for moments of $U_j$). Thus for any $x >0$ we have,
\begin{equation}\label{control 4}
	\P{\textcircled{4} > x} \leq \frac{C_{q} \tau^q}{x^q} \leq \frac{C_{q} n \tau^q}{x^q}.
\end{equation}
By combining equations \eqref{control 1}, \eqref{control 2} applied with $x \leftarrow xT/2$, the bound $\textcircled{3} \leq T  C e^{-c \tau}$ and \eqref{control 4} with $x \leftarrow xT/2$ we have
\[\P{Z_T / T \geq x + Ce^{-c\tau}} \leq CTe^{-c\tau} + \frac{C_q n\tau^q}{x^q T^q} + 8\exp\Big(-\frac{c_q x^2 T^2}{n\tau^2}\Big).\]
It concludes the proof of \eqref{non asymp th ergo} since $\frac{n\tau^q}{T^q} \leq \frac{4^{-q}}{n^{q-1}}$ and $\exp\big(-\frac{c_q x^2 T^2}{n\tau^2}\big) \leq \exp(-16 c_q nx^2 ) \leq \frac{C_q}{n^{q-1}x^{2(q-1)}}$ by inequality $e^{-cy} \leq (s/ec)^s y^{-s}$ for any $y,s,c>0$.


\begin{appendix}

\section{Auxiliary results for linear HAR processes}\label{appendixA}

In this appendix we present two additional results useful to the study of linear HAR processes. First, in Lemma \ref{lemA.1}, we derive $\Lf^1$ bounds and analytic properties of the coefficients defined in Lemma \ref{lem4.3}. Then we extend the use of the coefficients to random sequences in Proposition \ref{propA.2}.

\begin{lemma}\label{lemA.1}
    Let parameters $\model\in\lm$ and assume that assumption \Cspecbis$[\model]$ holds. Consider the coefficients $\left(\coef{p,k}{p',k'}{\model}\right)_{(p,k),(p',k')\in\Q, \ k\geq k'}$ defined by Lemma \ref{lem4.3}. Then the following properties hold.
    \allowdisplaybreaks
    \begin{enumerate}
        \item For any $(p_0,k_0)\in\Q$ for any $p\in\Pg$ we have, $\itg_{-\infty}^{k_0} \coef{p_0,k_0}{p,k}{\model}d\mp{p}{k} \leq \Big[(\Id-\Hmat^W_W)^{-1}\Big]_{p_0,p}$.
        \item For any $p_0,p\in\Pg$ we have, $\displaystyle\sum_{k_0\in\D_{p_0}\cap[0,1)}\itg_{k_0}^{\infty} \coef{p,k}{p_0,k_0}{\model}d\mp{p}{k} \leq n_{p} \times  \Big[ (\Id-\Hmat^W_W)^{-1} \Big]_{p,p_0}$.
    \end{enumerate}
\end{lemma}

Point 1 and point 2 give bounds on, respectively, the ascending and descending sums of the coefficients in term of the matrix $\Hmat^W_W$. The following result extends the use of coefficients to random sequences indexed by $\Z$ and also deals with uniqueness.

\begin{proposition}\label{propA.2}
    Let $\model\in\lm$ and suppose that \Cspecbis$[\model]$ holds. Suppose that $Z$ is a sequence of random variable such that $\sup_{(p,k)\in\Q} \E{\vert Z^p_{k}\vert } < \infty$. Then the equation
    \[ V^p_{k} = Z^p_{k} + \sum_{p'\in\Pg} \itg_{-\infty}^{k-} a^{p}_{p'}h^{p}_{p'}(k-k') V^{p'}_{k'} d\mp{p'}{k'}\] 
    has a unique solution among sequences $V$ for which $\sup_{(p,k)\in\Q} \E{\vert V^p_{k}\vert } < \infty$, being $\displaystyle V^p_{k} = \sum_{p'\in\Pg} \itg_{-\infty}^{k}  \coef{p,k}{p',k'}{\model} Z^{p'}_{k'} d\mp{p'}{k'}$.
\end{proposition}

\begin{proof}[Proof of Lemma \ref{lemA.1}]
We start with \textbf{point 1}.
For $p,p_0\in\Pg$, $k_0\in\D_{p_0}$ and $\tau \geq 1$, let $S^{p_0,k_0}_{p}(\tau) := \itg_{k_0-\tau}^{k_0} \coef{p_0,k_0}{p,k}{\model} d\mp{p}{k}$.
We also denote $S^{p_0}_{p}(\tau) = \sup_{k_0\in\D_{p_0}} S^{p_0,k_0}_{p}(\tau)$. Then by point 3 of Lemma \ref{lem4.3} we have,
\allowdisplaybreaks
\begin{align*}
    S^{p_0,k_0}_{p}(\tau) & = \ind_{p=p_0} + \itg_{k_0-\tau}^{k_0-} \coef{p_0,k_0}{p,k}{\model} d\mp{p}{k} = \ind_{p=p_0} + \itg_{k_0-\tau}^{k_0-} \left[ \sum_{p'\in\Pg} \itg_{k}^{k_0-} h^{p_0}_{p'}(k_0-k') \coef{p',k'}{p,k}{\model} d\mp{p'}{k'} \right] d\mp{p}{k}\\
    & = \ind_{p=p_0} + \sum_{p'\in\Pg} \itg_{k_0-\tau}^{k_0-}  \itg_{k_0-\tau}^{k'} h^{p_0}_{p'}(k_0-k') \coef{p',k'}{p,k}{\model} d\mp{p}{k}  d\mp{p'}{k'}\\
    & \leq \ind_{p=p_0} +  \sum_{p'\in\Pg} \itg_{k_0-\tau}^{k_0-}  h^{p_0}_{p'}(k_0-k') S^{p'}_{p}(\tau)  d\mp{p'}{k'} \leq \ind_{p=p_0} + \sum_{p'\in\Pg} \Vert h^{p_0}_{p'} \VLd{n_{p'}} S^{p'}_{p}(\tau).
\end{align*}
Taking the supremum on $k_0$ gives $S^{p_0}_p(\tau)\leq \ind_{p=p_0} + \sum_{p'\in\Pg} \Vert h^{p_0}_{p'} \VLd{n_{p'}} S^{p'}_p(\tau)$.
Thus if $S(\tau) = (S^p_{p'}(\tau))_{p,p'\in\Pg}$ we have, $N(\tau) \preceq \Id + \Hmat^W_W N(\tau)$. Equivalently,
\[ (\Id-\Hmat^W_W) N(\tau) \preceq \Id.\]
Since all the entries of $(\Id-\Hmat^W_W)^{-1} = \Id + \Hmat^W_W + (\Hmat^W_W)^2 + \cdots$ are non-negative, one can multiply both sides by $(\Id-\Hmat^W_W)^{-1}$ leading to $N(\tau) \preceq (\Id-\Hmat^W_W)^{-1}$, and so $N_{p_0,p}(\tau) \leq \big[(\Id-\Hmat^W_W)^{-1}\big]_{p_0,p}$. Letting $\tau\to\infty$ gives the intended result.\\
\textbf{Point 2} is a consequence of \textbf{point 1} and 1-periodicity. Indeed,
\allowdisplaybreaks
\begin{align*}
    \sum_{k_0\in\D_{p_0}\cap[0,1)}\itg_{k_0}^{\infty} \coef{p,k}{p_0,k_0}{\model}d\mp{p}{k} & =  \sum_{k_0\in\D_{p_0}\cap[0,1)}\itg_{k_0}^{\infty} \coef{p,k-\floor{k}}{p_0,k_0-\floor{k}}{\model}d\mp{p}{k}\\
    &=\sum_{k_1\in\D_{p}\cap[0,1)} \sum_{k_0\in\D_{p_0}\cap[0,1)}\itg_{k_0}^{\infty} \coef{p,k_1}{p_0,k_0-\floor{k}}{\model} \ind_{k-\floor{k}=k_1}d\mp{p}{k}\\
    & = \sum_{k_1\in\D_{p}\cap[0,1)} \sum_{k_0\in\D_{p_0}\cap[0,1)} \sum_{ k_0-k_1 \leq j\in\Z} \coef{p,k_1}{p_0,k_0-j}{\model} \\
    & = \sum_{k_1\in\D_{p}\cap[0,1)} \itg_{-\infty}^{k_1} \coef{p,k_1}{p_0,k}{\model} d\mp{p_0}{k}.
\end{align*}
Here \textbf{point 1} concludes the proof of \textbf{point 2} since there $\card(\D_p \cap [0,1)) =n_p$. 
\end{proof}

\begin{proof}[Proof of Proposition \ref{propA.2}]
Let $V^p_k = \sum_{p'\in\Pg} \int_{-\infty}^{k} \coef{p,k}{p',k'}{\model} Z^{p'}_{k'}d\mp{p'}{k'}$. By triangular inequality, Fubini Theorem and Lemma \ref{lemA.1}, it is clear that $\sup_{p,k \in\Q} \E{\vert V^p_k\vert} <\infty$. Thus $V$ is well defined and uniformly bounded in $L^1$ as required. Let us check that $V$ is indeed a solution. By point 3 of Lemma \ref{lem4.3} We have
\allowdisplaybreaks
\begin{align*}
    V^p_{k} & = \sum_{p'\in\Pg} \itg_{-\infty}^{k}  \coef{p,k}{p',k'}{\model} Z^{p'}_{k'} d\mp{p'}{k'} = Z^p_{k} + \sum_{p'\in\Pg} \itg_{-\infty}^{k-}  \coef{p,k}{p',k'}{\model} Z^{p'}_{k'} d\mp{p'}{k'}\\
    & = Z^p_{k} + \sum_{p'\in\Pg} \itg_{-\infty}^{k-}  \left[\sum_{p''\in\Pg} \itg_{k'}^{k-} h^{p}_{p''}(k-k'') \coef{p'',k''}{p',k'}{\model} d\mp{p''}{k''}\right] Z^{p'}_{k'} d\mp{p'}{k'}\\
    & = Z^p_{k} + \sum_{p''\in\Pg} \itg_{-\infty}^{k-} h^{p}_{p''}(k-k'') \left[\sum_{p'\in\Pg} \itg_{-\infty}^{k''} \coef{p'',k''}{p',k'}{\model} Z^{p'}_{k'} d\mp{p'}{k'}\right] d\mp{p''}{k''}\\
    & = Z^p_{k} + \sum_{p'\in\Pg} \itg_{-\infty}^{k-} h^{p}_{p'}(k-k') V^{p'}_{k'} d\mp{p'}{k'}.
\end{align*}
Swapping integrals from line 2 to line 3 is justified since Fubini Theorem applies almost surely. Indeed, by Lemma \ref{lemA.1} and if $\mathcal{Z} = \sup_{p,k\in\Q} \E{\vert Z^p_k\vert}$ we have
\allowdisplaybreaks
\begin{align*}
	\mathds{E}\bigg[\sum_{p'',p'\in\Pg} &\itg_{-\infty}^{k-}\itg_{-\infty}^{k''} h^{p}_{p''}(k-k'')  \coef{p'',k''}{p',k'}{\model} \vert Z^{p'}_{k'}\vert d\mp{p'}{k'} d\mp{p''}{k''}\bigg]\\
	& \leq \mathcal{Z}\sum_{p'',p'\in\Pg} \itg_{-\infty}^{k-}\itg_{-\infty}^{k''} h^{p}_{p''}(k-k'')  \coef{p'',k''}{p',k'}{\model} d\mp{p'}{k'} d\mp{p''}{k''}\\
	& \leq \mathcal{Z} \bigg[\Hmat^W_W \big(\Id-\Hmat^W_W\big) \ones\bigg]_{p}<\infty.
\end{align*}
For uniqueness we can suppose by linearity that $Z=0$, and we have to show that if $V^p_k = \sum_{p'\in\Pg} \int_{-\infty}^{k-} h^p_{p'}(k-k') V^{p'}_{k'}d\mp{p'}{k'}$ and $\sup_{p,k\in\Q} \E{|V^p_k|}<\infty$ then $V=0$. Let $\mathcal{V}_p=\sup_{k\in\D_p} \E{|V^p_k|}$ for $p\in\Pg$. Then for any $p\in\Pg$ we have,
\[ \E{|V^p_{k}|} \leq \sum_{p'\in\Pg}\itg_{-\infty}^{k-} h^{p}_{p'}(k-k') \E{\vert V^{p'}_{k'}\vert} d\mp{p'}{k'} \leq \sum_{p'\in\Pg} (\Hmat^W_W)_{pp'} \mathcal{V}_{p'}.\]
Thus, by taking a supremum on $k\in\D_p$, we have $\mathcal{V}_p \leq \sum_{p'\in\Pg} (\Hmat^W_W)_{pp'} \mathcal{V}_{p'} = (\Hmat^W_W \mathcal{V})_p$ with $\mathcal{V} =(\mathcal{V}_p)_{p\in\Pg}$. Iterating this formula gives $\mathcal{V} \preceq (\Hmat^W_W)^n \mathcal{V} \xrightarrow[n\to\infty]{} \bm{0}$ which concludes.
\end{proof}

\section{Dominations for HAR processes}\label{appendixB}

\subsection{Linear domination of HAR processes}

The following proposition gives a domination of HAR process with parameters $(\model,\xi)$ by the one with the associated linear parameters $(\model^+,\xi^+)$ defined in Definition \ref{def5.1}. This proposition allows us to transfer results from linear HAR processes to general HAR processes. Recall that we denote by $\nu^m$ the quantity defined by \eqref{eq2.3} so that $\lambda^m = \Phi^m(\nu^m)$.

\begin{proposition}[$\lm$ domination]\label{propB.1}
    Let parameters and random drifts $(\model,\xi)$ and the associated linear couple $(\model^+,\xi^+)\in\lm$ defined in Definition \ref{def5.1}. 
    \begin{enumerate}
        \item Suppose that assumption \Cnoexpl$[\model]$ holds, then \Cnoexpl$[\model^+]$ also holds. For any $t_0\in\R$, for any integrable initial condition $\mathfrak{C}$, the initial condition $\mathfrak{C}^+$ is also integrable and processes $X=\Equ{\model,\xi}{t_0,\mathfrak{C}}$ and $\tilde{X}=\Equ{\model^+,\xi^+}{t_0,\mathfrak{C}^+}$ from Theorem \ref{th3.4} compare as follows:
        \begin{itemize}
            \item[--] $\forall m, \ N^m \subset \tilde{N}^m$, and $\forall t\geq t_0, \ \lambda^m_t \leq \tilde{\lambda}^m_t$, more precisely $L_m\vert\nu^m_t\vert \leq \tilde{\lambda}^m_t - \Phi^m(0)$,
            \item[--] $\forall (p,k)\in\Q$ with $k\geq t_0, \ \vert W^p_{k}\vert \leq \tilde{W}^p_{k}.$
        \end{itemize}
        \item Suppose that assumptions \Cspec$[\model]$ and \Cfime$[\xi]$ hold, then \Cspec$[\model^+]$ and \Cfime$[\xi^+]$ also hold and processes $X=\Equ{\model,\xi}{-\infty}$ and $\tilde{X}=\Equ{\model^+,\xi^+}{-\infty}$ from Theorem \ref{th3.9} compare as follows:
        \begin{itemize}
            \item[--] $\forall m, \ N^m \subset \tilde{N}^m$, and $\forall t\in\R, \ \lambda^m_t \leq \tilde{\lambda}^m_t$, more precisely $L_m\vert\nu^m_t\vert \leq \tilde{\lambda}^m_t - \Phi^m(0)$,
            \item[--] $\forall (p,k)\in\Q, \ \vert W^p_{k}\vert \leq \tilde{W}^p_{k}.$
        \end{itemize}
    \end{enumerate}
\end{proposition}

\begin{proof}[Proof of Proposition \ref{propB.1}]
Let $\model = \big((h^{\alpha}_{m})_{\alpha,m}, (\J^{\alpha}_p,h^{\alpha}_{p},b^{\alpha}_p)_{\alpha,p}, \ (\Phi^{m})_{m}, \ (\Phi^p)_p \big)$. We denote with a $\tilde{\ }$ corresponding quantity in $\model^+$.\\ \textbf{Point 1.} That fact that assumption \Cnoexpl$[\model^+]$ holds is clear since it holds for $\model$. It is also clear that the initial condition $\mathfrak{C}^+$ is integrable since $\mathfrak{C}$ is integrable. 

Let $t_0=T_0<T_1<T_2<\cdots<T_n<\cdots$ be the ordered random sequence of both the points $(N^m\cap[t_0,\infty))_{\in\M}$ and the domains $\cup_{p\in\Pg} \D_p\cap[t_0,\infty)$. We will prove that the result holds for all $t\in[T_0, T_n]$ and all $(p,k)\in\Q$ with $T_0 \leq k\leq T_n$ by induction on $n$.

\underline{$n\to n+1$:} Let us first remark that by induction and by construction of $\mathfrak{C}^+$ we have, for all $\alpha\in\M\cup\Pg$ and all $(p,k)\in\Q$ with $k\leq T_n$ (which is the same as $k<T_{n+1}$),
\begin{equation}\label{eq domi lin 1}
	L_{\alpha} \vert\J^{\alpha}_p(t-k,W^p_k)\vert \leq \tJ^{\alpha}_p(t-k,\tilde{W}^p_{k}).
\end{equation} 
Since for $X$ there are no values of $W$ nor points of $N$ in $(T_n,T_{n+1})$, we have, for $t\in(T_n,T_{n+1}]$,
\[\nu^m_t = \sum_{m'\in\M} \itg_{-\infty}^{T_n} h^{m}_{m'}(t-s)dN^{m'}_s + \sum_{p\in\Pg}\itg_{-\infty}^{T_n} \J^m_{p}(t-k,W^p_{k}) d\mp{p}{k}.\]
By definition one has, $\tilde{\lambda}^m_{t} = \Phi^m(0)  + \sum_{m'\in\M} \itg_{-\infty}^{t-}  \tilde{h}^{m}_{m'}(t-s) d\tilde{N}^{m'}_s + \sum_{p\in\Pg}\itg_{-\infty}^{t-} \tJ^m_{p}(t-k,\tilde{W}^p_{k}) d\mp{p}{k}$. But since by induction we have that every point of $N$ before $T_n$ is also a point for $\tilde{N}$, and by \eqref{eq domi lin 1}, we have,
\begin{equation*}
    \tilde{\lambda}^m_t \geq \Phi^m(0) + \sum_{m'\in\M} \itg_{-\infty}^{T_n} L_m \vert h^{m}_{m'}(t-s)\vert dN^{m'}_s + \sum_{p\in\Pg}\itg_{-\infty}^{T_n} L_m  \vert \J^m_{p}(t-k, W^p_{k})\vert d\mp{p}{k}.
\end{equation*}
From this it is clear that $L_m\vert\nu^m_t\vert \leq \tilde{\lambda}^m_t - \Phi^m(0)$, which implies, since $\Phi^m$ is $L_m$-Lipschitz, that $\lambda^m_t \leq \tilde{\lambda}^m_t$. Therefore in $(T_n,T_{n+1}]$ we still have $N^m\subset \tilde{N}^m$. Finally, if there exists $p\in\Pg$ such that $T_{n+1})\in\D_p$ we have
\begin{align*}
    \vert  W^p_{T_{n+1}} \vert & =\bigg\vert  \xi^p_{T_{n+1}} + \Phi^p\bigg(\sum_{m\in\M} \itg_{-\infty}^{T_{n+1}-} h^{p}_{m}(T_{n+1}-s)dN^{m}_s + \sum_{p'\in\Pg}\itg_{-\infty}^{T_{n+1}-} \J^{p}_{p'}(T_{n+1}-k',W^{p'}_{k'}) d\mp{p'}{k'}\bigg) \bigg\vert\\
    & \leq  \vert\xi^p_{T_{n+1}}\vert + \sum_{m\in\M} \itg_{-\infty}^{T_{n+1}-} L_p \vert h^{p}_{m}(T_{n+1}-s)\vert d\tilde{N}^{m}_s + \sum_{p'\in\Pg}\itg_{-\infty}^{T_{n+1}-} \tJ^{p}_{p'}(T_{n+1}-k')(\tilde{W}^{p'}_{k'})d\mp{p'}{k'} \\
    & = \tilde{W}^p_{T_{n+1}}.
\end{align*}
\underline{$n=0$:} It is clear from the definition of $\model^+$, $\mathfrak{C}^+$, $\xi^+$ and the arguments in the step $n\to n+1$. This concludes the proof of \textbf{point 1}.

\textbf{Point 2.} First let us check that $\model^+$ satisfies the assumptions of Theorem \ref{th3.9}. Assumption \Cfime$[\xi^+]$ is clear since $\xi^+ = \vert \xi\vert$. By construction $\Hmod{\model^+}=\Hmod{\model}$, thus assumption \Cspec$[\model^+]$ is also clear. Finally one easily checks, by induction on $n$, using same ideas as in proof of \textbf{point 1}, that the following is true in the Picard iteration from the proof of Theorem \ref{th3.9},
\begin{equation*}
    \forall n\in\N, \ 
    \left\{ 
    \begin{aligned}
        & \forall m\in\M, \ \forall t\in\R, \  L_m\vert\nu^{m,n}_t\vert \leq \tilde{\lambda}^{m,n}_t - \Phi^m(0) \ \text{and} \ \lambda^{m,n}_t \leq \tilde{\lambda}^{m,n}_t\\
        & \forall (p,k)\in\Q, \ \vert W^{p,n}_{k} \vert \leq \tilde{W}^{p,n}_{k}
    \end{aligned}
    \right.
    .
\end{equation*}
Since the result is true at each step of the Picard iteration, it is true for the limit, which concludes.
\end{proof}

\subsection{Domination of the difference of two HAR processes}

This subsection is dedicated to Lemma \ref{lemB.2}, a technical Lemma used in the proof of Theorem \ref{th6.3}. Recall that in the proof of Theorem \ref{th6.3} we have two HAR processes with different initial conditions, indexed by $i=1,2$ at time $t_0\in\R$. The process $\Big(( \Delta \lambda^m)_{m\in\M},(\Delta N^{m})_{m\in\M},(\Delta W^{p})_{p\in\Pg}\Big)$ corresponds to the difference, in absolute value, between these two processes. We recall that we denote by $\tilde{\mathfrak{h}}$ the cluster functions for $\model^+$.

Lemma \ref{lemB.2} bounds the difference process by a linear HAR process with vanishing base rates.

\begin{lemma}\label{lemB.2}
    Suppose that assumptions \Cspecbis$[\model]$ and \Cexptail$[\model]$ hold. Conditionally to $\F_{t_0-}$ there exist Poisson random measures $\mathscr{P}^m$ on $[t_0,\infty)\times \R_+$ for $m\in\M$ and a process $\Big((\mathscr{l}^m)_{m\in\M},(\mathscr{N}^m)_{m\in\M},(\mathscr{W}^p)_{p\in\Pg}\Big)$ adapted to $\mathcal{G}_t = \sigma\big( \mathscr{P}^m\big\vert_{[t_0,t]\times \R_+}, \ m\in\M \big)$ such that
    \begin{itemize}
        \item for $m\in\M, \ t\geq t_0$, $ \Delta\lambda^m_t \leq \mathscr{l}^m_t$ and $\mathscr{l}^m$ is $\mathcal{G}_t$-predictable ,
        \item for $m\in\M, \ t\geq t_0$, $d\Delta N^{m}_t \leq d\mathscr{N}^m_t$,
        \item for $(p,k)\in\Q, \ k\geq t_0$, $\Delta W^{p}_k \leq \mathscr{W}^p_k$.
    \end{itemize}
    \begin{flalign*}
    \text{And we have,} \qquad
    \left\{
    \begin{aligned}
    & \mathscr{l}^m_t  = u_0 e^{-c(t-t_0)} + \sum_{m'\in\M}\itg_{t_0}^{t-} \tilde{\mathfrak{h}}^m_{m'}(s,t) d\mathscr{N}^{m'}_s, \quad t\geq t_0,\\
    & \mathscr{N}^m(C) = \itg_{C\times \R_+} \ind_{0< x\leq \mathscr{l}^m_s } d\mathscr{P}^m(s,x), \quad C\subset [t_0,\infty),\\
    & \mathscr{W}^p_{k} = u_0 e^{-c(k-t_0)} + \sum_{m\in\M} \itg_{t_0}^{k-} Ke^{-c(k-s)} d\mathscr{N}^m_s, \quad (p,k)\in\Q \text{ with } k\geq t_0,
    \end{aligned}
    \right.&&
    \end{flalign*}
    where $u_0 = K \Massiexp_{\delta,t_0}\big(\mathfrak{C}_1 \circleddash \mathfrak{C}_2\big)$, see \eqref{notation mass exp}, with $K, c, \delta>0$ constants depending only on $\model$.
\end{lemma}

\begin{proof}[Proof of Lemma \ref{lemB.2}]
    As in the proof of Proposition \ref{propB.1} we denote with an additional $\tilde{\ }$ the elements of $\model^+$. Recall that for $i=1,2$ we have for $t\geq t_0$, $C\subset [t_0,\infty)$ and $k\geq t_0$
    \begin{equation*}
        \left\{
        \begin{aligned}
            & \lambda^{m,\mathfrak{C}_i}_t = \Phi^m\bigg( \sum_{m'\in\M} \itg_{-\infty}^{t-} h^{m}_{m'}(t-s)dN^{m',\mathfrak{C}_i}_s + \sum_{p\in\Pg} \itg_{-\infty}^{t-} \J^m_p(t-k,W^{p,\mathfrak{C}_i}_k) d\mp{p}{k}\bigg) \\
            & N^{m,\mathfrak{C}_i}(C) = \itg_{C\times \R_+} \ind_{x\leq \lambda^{m,\mathfrak{C}_i}_s} d\pi^m(s,x)\\
            & W^{p,\mathfrak{C}_i}_k = \xi^p_k + \Phi^p\bigg(\sum_{m\in\M} \itg_{-\infty}^{k-} h^p_m(k-s)dN^{m,\mathfrak{C}_i}_s + \sum_{p'\in\Pg} \itg_{-\infty}^{k-} \J^p_{p'}(k-k',W^{p',\mathfrak{C}_i}_{k'})d\mp{p'}{k'}\bigg).
        \end{aligned}
        \right.
    \end{equation*}
    Until the end we work conditionally to $\F_{t_0-}$ and thus initial conditions are deterministic.
    Let $\mathscr{P}^m \subset [t_0,\infty)\times\R_+$ defined by $\mathscr{P}^m(s,x) = \pi^m\Big(s,x+\min\big[\lambda^{m,\mathfrak{C}_1}_s,\lambda^{m,\mathfrak{C}_2}_s\big]\Big)$. Random measure $\mathscr{P}^m$ is a $\F_t$-predictable vertical shift of $\pi^m$ and thus it is also a Poisson random measure. Then it follows that for $C\subset [t_0,\infty)$,
    \begin{equation}\label{eqD.1}
        \Delta N^m (C) =  \itg_{C\times \R_+} \ind_{0<x\leq  \Delta\lambda^m(s)} d\mathscr{P}^m(s,x).
    \end{equation}
    Since $\Phi^m$ is $L_m$ Lipschitz, by triangular inequality we have
    \begin{align*}
        \Delta\lambda^m_t \leq  \sum_{m'\in\M} & \itg_{t_0}^{t-}  \tilde{h}^{m}_{m'}(t-s)  d\Delta N^{m'}_s + \sum_{p\in\Pg} \itg_{t_0}^{t-} \tilde{h}^m_p(t-k)\Delta W^{p}_k d\mp{p}{k}+ \sum_{m'\in\M} \itg_{-\infty}^{t_0-} \tilde{h}^{m}_{m'}(t-s)  d(\mathfrak{C}^{m'}_1 \vartriangle d\mathfrak{C}^{m'}_2)_s  \\
        & + \sum_{p\in\Pg} \itg_{-\infty}^{t_0-} L_m\vert\J^m_p(t-k,\mathfrak{C}^p_{1,k}) - \J^m_p(t-k,\mathfrak{C}^p_{2,k})\vert d\mp{p}{k}.
    \end{align*}
    From \Cexptail$[\model]$, which implies \Cexptail$[\model^+]$, there exists constant $c_1,K_1>0$ such that $\tilde{h}^{m}_{m'}(s)\leq K_1 e^{-c_1 s}$, thus we have,
    \begin{align*}
        \sum_{m'\in\M} \itg_{-\infty}^{t_0-} \tilde{h}^{m}_{m'}(t-s)  d(\mathfrak{C}^{m'}_1 \vartriangle \mathfrak{C}^{m'}_2)_s 
        & \leq K e^{-c(t-t_0) } \Massiexp_{\delta,t_0}\big(\mathfrak{C}_1 \circleddash \mathfrak{C}_2\big),    
	\end{align*}
    with $c  = \delta = c_1 >0$ and $K=K_1$. The function $\psi_2$ defined in \eqref{def psi} is tuned such that
    \[L_m\vert\J^m_p(t-k,\mathfrak{C}^p_{1,k}) - \J^m_p(t-k,\mathfrak{C}^p_{2,k})\vert \leq \big( \tilde{h}^m_p(t-k) + \tilde{b}^m_p(t-k)\big) \psi_2(\mathfrak{C}^p_{1,k},\mathfrak{C}^p_{2,k}).\]
    Thus, by \Cexptail$[\model^+]$ we also have for some $c,K>0$,
    \[\sum_{p\in\Pg} \itg_{-\infty}^{t_0-} L_m\vert\J^m_p(t-k,\mathfrak{C}^p_{1,k}) - \J^m_p(t-k,\mathfrak{C}^p_{2,k})\vert d\mp{p}{k} \leq K e^{-c(t-t_0) } \Massiexp_{\delta,t_0}\big(\mathfrak{C}_1 \circleddash \mathfrak{C}_2\big).\]
    Thus, if $u_0 = K \Massiexp_{\delta,t_0}\big(\mathfrak{C}_1 \circleddash \mathfrak{C}_2\big)$, up to increasing $K$ we have, (same arguments lead to the inequality for $\Delta W^p_k$)
    \begin{equation}\label{eqD.2}
    	\left\{
    	\begin{aligned}
    		& \Delta\lambda^m_t \leq u_0 e^{-c(t-t_0)} + \sum_{m'\in\M} \itg_{t_0}^{t-} \tilde{h}^{m}_{m'}(t-s)  d\Delta S^{m'}_s + \sum_{p\in\Pg} \itg_{t_0}^{t-} \tilde{h}^m_p(t-k) \Delta W^{p}_k d\mp{p}{k}.\\
    		& \Delta W^p_k \leq u_0 e^{-c(k-t_0)} + \sum_{m\in\M} \itg_{t_0}^{k-} \tilde{h}^{p}_{m}(k-s) d\Delta S^{m}_s + \sum_{p'\in\Pg} \itg_{t_0}^{k-} \tilde{h}^p_{p'}(k-k')\Delta W^{p'}_{k'} d\mp{p'}{k'}.
    	\end{aligned}
        	\right.
    \end{equation}
    Thus a process $\Big((\check{\mathscr{l}}^m)_{m\in\M},(\check{\mathscr{N}}^m)_{m\in\M},(\check{\mathscr{W}}^p)_{p\in\Pg}\Big)$ satisfying equality in equations \eqref{eqD.1} and \eqref{eqD.2} and, ie
    \begin{equation}\label{eqD.4}
        \left\{
        \begin{aligned}
            & \check{\mathscr{l}}^m_t = u_0 e^{-c(t-t_0)} + \sum_{m'\in\M} \itg_{t_0}^{t-} \tilde{h}^{m}_{m'}(t-s)  d\check{\mathscr{N}}^{m'}_s + \sum_{p\in\Pg} \itg_{t_0}^{t-} \tilde{h}^m_p(t-k)  \check{\mathscr{W}}^p_k d\mp{p}{k} \\
            & \check{\mathscr{N}}^m(C) = \itg_{C\times \R_+} \ind_{x\leq \check{\mathscr{l}}^m_s} d\mathscr{P}^m(s,x)\\
            & \check{\mathscr{W}}^p_k = u_0 e^{-c(k-t_0)} + \sum_{m\in\M} \itg_{t_0}^{k-} \tilde{h}^{p}_{m}(k-s) d\check{\mathscr{N}}^m_s + \sum_{p'\in\Pg} \itg_{t_0}^{k-} \tilde{h}^p_{p'}(k-k')  \check{\mathscr{W}}^{p'}_{k'} d\mp{p'}{k'},
        \end{aligned}
        \right.
    \end{equation}
    must dominate $\Big((\Delta \lambda^m)_{m\in\M}, (\Delta S^{m})_{m\in\M}, (\Delta W^{p})_{p\in\Pg}\Big)$. Equations \eqref{eqD.4} are in fact linear HAR equations with parameter from $\model^+$ up to $\tilde{b}^{\alpha}_p=0$, vanishing base rates instead of $\Phi^m(0)$ for $\check{\mathscr{N}}$ and vanishing deterministic drifts instead of $\xi^+$ for $\check{\mathscr{W}}$. Thus one can use the results on linear models. More precisely, we can express exactly the process using cluster functions $\tilde{\mathfrak{h}}$ and coefficients $\coef{p,k}{p',k'}{\model^+}$ as follows,
    \begin{equation*}
        \left\{
        \begin{aligned}
            & \check{\mathscr{l}}^m_t = u_0 e^{-c(t-t_0)} + \sum_{p,p'\in\Pg}\itg_{t_0}^{t-} \itg_{t_0}^k \tilde{h}^m_p(t-k)\coef{p,k}{p',k'}{\model^+}u_0 e^{-c(k'-t_0)}  d\mp{p'}{k'} d\mp{p}{k} + \sum_{m'\in\M} \itg_{t_0}^{t-} \tilde{\mathfrak{h}}^{m}_{m'}(t-s) d\check{\mathscr{N}}^{m'}_s \\
            & \check{\mathscr{N}}^m(C) = \itg_{C\times \R_+} \ind_{x\leq \check{\mathscr{l}}^m_s} d\mathscr{P}^m(s,x)\\
            & \check{\mathscr{W}}^p_k = \sum_{p'\in\Pg} \itg_{t_0}^k \coef{p,k}{p',k'}{\model^+}u_0 e^{-c(k'-t_0)}d\mp{p'}{k'} + \sum_{p'\in\Pg} \itg_{t_0}^k \sum_{m\in\M} \itg_{t_0}^{k'-} \coef{p,k}{p',k'}{\model^+}\tilde{h}^p_m(k'-s) d\check{\mathscr{N}}^m_s d\mp{p'}{k'}
        \end{aligned}
        \right. .
    \end{equation*}

	Then one can use the exponential decay of the coefficients stated in Proposition \ref{prop6.1}, and \Cexptail$[\model^+]$ to prove that we have (up to reducing $c$ and increasing $K$),
    \begin{equation}\label{eqD.5}
        \left\{
        \begin{aligned}
            & \check{\mathscr{l}}^m_t \leq u_0 K e^{-c(t-t_0)} + \sum_{m'\in\M} \itg_{t_0}^{t-} \tilde{\mathfrak{h}}^{m}_{m'}(t-s) d\check{\mathscr{N}}^{m'}_s \\
            & \check{\mathscr{N}}^m(C) = \itg_{C\times \R_+} \ind_{x\leq \check{\mathscr{l}}^m_s} d\mathscr{P}^m(s,x)\\
            & \check{\mathscr{W}}^p_k \leq u_0 K e^{-c(k-t_0)} + \sum_{m\in\M} \itg_{t_0}^{k-} K e^{-c(k-s)} d\check{\mathscr{N}}^m_s
        \end{aligned}
        \right. .
    \end{equation}
    The searched process is the one which satisfies equality in \eqref{eqD.5}, which is clearly adapted (predictable for $\mathscr{l}$) to $\mathcal{G}_t$. Then doing $u_0 \leftarrow K u_0$ concludes.
\end{proof}

\section{Exponential moments}\label{appendixC} 

In this appendix we state a general result on exponential moments for point processes $(N^m)_{m\in\M}$ with a 1-periodic cluster structure and immigrant rate given by linear functionals of random drifts. Recall that the random drifts $(\xi^p_k)_{(p,k)\in\Q}$ are independent random real variables, assumed to be non-negative: $\xi^p_k \geq 0$. In the framework of HAR processes we also require that for all $(p,k)\in\Q$ we have $\xi^p_k \sim \xi^p_{k+n}$ for any $n\in\Z$ but it is not necessary here. 

Consider the random variable $\mathcal{Z} = \displaystyle\sum_{m\in\M} \itg_{\R} B^m(t) dN^m_t + \sum_{p\in\Pg} \itg_{\R} C^p_k \xi^p_k d\mp{p}{k} = \Mass_B(N) + \Mass_C(\xi)$, where we have
\begin{itemize}
	\item $(N^m)_{m\in\M}$ a multivariate point process such that conditionally to the $(\xi^p_k)_{(p,k)\in\Q}$ it has immigrant rates $(\I^m)_m$ and intensity $(\lambda^m)_m$ given by
\begin{equation}\label{cluster rep f}
	\lambda^m_t = \I^m_t + \sum_{m'\in\M} \itg_{-\infty}^{t-} \mathfrak{f}^{m}_{m'}(s,t)dN^{m'}_s, \quad \I^m_t = \mathscr{i} + \sum_{(p,k)\in\Q} A^m_{p,k}(t) \xi^p_k,
\end{equation}
with non-negative bivariate functions $\bm{\mathfrak{f}}=(\mathfrak{f}^m_{m'})_{m,m'\in\M}$ such that $\mathfrak{f}^m_{m'}(s+1,t+1) =\mathfrak{f}^m_{m'}(s,t)$, a non-negative constant $\mathscr{i}$ and where $A^m_{p,k}$ are a non-negative real functions for all $m\in\M$, $(p,k)\in\Q$. We also define the following matrices; $A_{\infty} = \Big( \displaystyle\sup_{t\in\R} \itg_{\R} A^m_{p,k}(t)d\mp{p}{k}\Big)_{(p,m)\in \Pg\times\M}$ and $\mathfrak{F} = \Big( \displaystyle\sup_{s\in\R} \Vert \mathfrak{f}^{m}_{m'}(s,\cdot)\Vert_1 \Big)_{m,m'\in\M}$.
	\item $B=(B^m)_{m\in\M}$ are non-negative real functions and for $n\in\N^*$ we define $B_{\infty,n} = \big( \Vert B^m\VLd{1/n}\big)_{m\in\M}$. 
	\item $C=(C^p)_{p\in\Pg}$ are non-negative functions on the $\D_p$'s and we define $C_{\infty} = \Big(\itg_{\R} C^p_k d\mp{p}{k}\Big)_{p\in\Pg}$.
\end{itemize}
\begin{lemma}\label{lemC.1}
 	 Let $n\geq 1$ an integer. Suppose that assumption \Csubg$[\xi]$ holds. Then we have
    \[\E{e^{\mathcal{Z}}} \leq \exp\Big[ \mathscr{i} \vert n e^{\bm{\L}(B_{\infty,n})}-n \ones \vert_1 + \mathfrak{e} \vert\bm{\mathcal{X}}\vert_1 + \mathfrak{s}\vert\bm{\mathcal{X}}\vert_2^2 \Big],\]
where $\bm{\mathcal{X}} = C_{\infty} +  A_{\infty}  ( n e^{\bm{\L}(B_{\infty,n})}-n \ones)$, with $\L$ the log-Laplace function of a $\Pois(\mathfrak{F}^{\intercal})$ Galton Watson tree defined in Theorem 3.1 of \cite{leblanc_sharp_2025}. If $\spr(\mathfrak{F}^{\intercal})<1$ then $\L$ is finite and Lipschitz for $\vert \cdot \vert_{\infty}$ on a vicinity of $0$.
\end{lemma}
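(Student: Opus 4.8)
The plan is to condition on the random drifts and reduce the whole estimate to a single computation for one cluster. \textbf{Step 1 (conditioning).} Conditionally on $\xi=(\xi^p_k)_{(p,k)\in\Q}$, the process $(S^m)_m$ is, by hypothesis, the superposition over each node $m$ of independent clusters rooted at the points of a Poisson process $\Pi^m$ of conditionally deterministic intensity $\I^m_t$, each cluster $G^m_s$ generated by the $\mathfrak{h}^m_{m'}$ and independent of $\xi$. Setting $Z(m,s):=\sum_{u\in G^m_s}B^{\tp(u)}(\bd(u))\ge 0$ and $\Phi_m(s):=\E{e^{Z(m,s)}}$ — a deterministic function of $s$ not depending on $\xi$ — the exponential formula for Poisson processes gives
\[ \E{e^{B\bullet S}\cond\xi}=\exp\Big(\sum_{m\in\M}\itg_\R \I^m_t\big(\Phi_m(t)-1\big)\,dt\Big). \]
Since $C\bullet\xi$ is $\xi$-measurable, it then remains to control $\E{\exp\big(C\bullet\xi+\sum_m\itg_\R\I^m_t(\Phi_m(t)-1)\,dt\big)}$.

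\textbf{Step 2 (the core estimate).} The heart of the argument is the $L^1$-in-root-time bound, for every $m$,
\[ \itg_\R\big(\Phi_m(t)-1\big)\,dt\ \le\ n\big(e^{\bm{\L}(B_{\infty,n})_m}-1\big), \]
together with $\Phi_m(t)-1\ge 0$. I would exploit that the cluster functions are $1$-periodic (hence $n$-periodic), so that the law of $G^m_{\tau+jn}$ is that of $G^m_\tau$ translated by $jn$; splitting $\R$ into the intervals $[jn,(j+1)n)$ rewrites the integral as $\itg_0^n\E{\sum_{j\in\Z}(e^{Z^{(jn)}_\tau}-1)}\,d\tau$ where $Z^{(jn)}_\tau=\sum_{u\in G^m_\tau}B^{\tp(u)}(\bd(u)+jn)$, \emph{all} computed from the same cluster $G^m_\tau$. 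The elementary superadditivity inequality $\sum_j(e^{a_j}-1)\le e^{\sum_j a_j}-1$ for $a_j\ge 0$ (induction using $(e^a-1)(e^b-1)\ge 0$, then monotone limit) collapses the inner sum to $e^{W_\tau}-1$ with $W_\tau=\sum_j Z^{(jn)}_\tau\le\sum_{m'\in\M}\Vert B^{m'}\Vert^{\infty}_{1,1/n}\,\card_{m'}(G^m_\tau)$, the last bound being exactly the definition of $\Vert\cdot\Vert^{\infty}_{1,1/n}$. Stochastic domination of $(u,\tp(u))_{u\in G^m_\tau}$ by a $\Pois(\mathfrak{H}_{\model}^{\intercal})$ Galton--Watson tree (Section \ref{sec4.3}) and the definition of $\bm{\L}$ then bound $\E{e^{W_\tau}}$ by $e^{\bm{\L}(B_{\infty,n})_m}$, and integrating over $\tau\in[0,n)$ supplies the factor $n$. (The same computation with a single $j$ yields the pointwise bound $\Phi_m(t)-1\le e^{\bm{\L}(B_{\infty,n})_m}-1$, which I will not actually need.) I expect this folding argument — and making it produce precisely the constant $n(e^{\bm{\L}(B_{\infty,n})_m}-1)$ — to be the main obstacle; the machinery on $\bm{\L}$ and on exponential moments of Galton--Watson trees is imported from \cite{leblanc_sharp_2025}.

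\textbf{Step 3 (unconditioning and bookkeeping).} Since $\Phi_m(t)-1\ge 0$ and $\I^m_t=\mathscr{i}_m+\sum_{(p,k)}A^m_{p,k}(t)\xi^p_k$ with $A^m_{p,k}\ge 0$, I would bound $\xi^p_k\le|\xi^p_k|$ inside the immigrant rate and split off the deterministic term $E_1:=\sum_m\mathscr{i}_m\itg_\R(\Phi_m(t)-1)\,dt\le n\,\mathscr{i}^{\intercal}(e^{\bm{\L}(B_{\infty,n})}-\ones)$ via Step 2. The remaining part is then $\le\sum_{(p,k)}|\xi^p_k|\gamma^p_k$ with $\gamma^p_k:=\sum_m\itg_\R A^m_{p,k}(t)(\Phi_m(t)-1)\,dt\ge 0$; the decisive bookkeeping point is that for each fixed $t$ one has $\itg_\R A^m_{p,k}(t)\,d\mp{p}{k}\le(A_\infty)_{p,m}$ by definition of $A_\infty$, so interchanging the $k$-sum with the $t$-integral and invoking Step 2 gives $\sum_{k\in\D_p}\gamma^p_k\le n\sum_m(A_\infty)_{p,m}(e^{\bm{\L}(B_{\infty,n})_m}-1)$, hence $\sum_{k\in\D_p}(C^p_k+\gamma^p_k)\le\bm{\mathcal{X}}_p$ (using $\sum_k C^p_k=(C_\infty)_p$ and the definition of $\bm{\mathcal{X}}$). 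Consequently $\E{e^{\mathcal{Z}}}\le e^{E_1}\,\E{\exp\big(\sum_{(p,k)}(C^p_k+\gamma^p_k)|\xi^p_k|\big)}$, and by independence of the $\xi^p_k$ and assumption \Csubg\ the last expectation is at most $\exp\big(\sum_{(p,k)}\mathfrak{e}_p(C^p_k+\gamma^p_k)+\sum_{(p,k)}\mathfrak{s}_p(C^p_k+\gamma^p_k)^2\big)$. Bounding the linear part by $\mathfrak{e}^{\intercal}\bm{\mathcal{X}}$, the quadratic part by $\bm{\mathcal{X}}^{\intercal}\diag(\mathfrak{s})\bm{\mathcal{X}}$ (using $\sum_k a_k^2\le(\sum_k a_k)^2$ for $a_k\ge 0$), and recombining with $E_1$ recovers exactly the announced inequality.
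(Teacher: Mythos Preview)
Your proposal is correct and follows essentially the same approach as the paper: condition on $\xi$, apply the Poisson exponential formula to isolate $\psi^m(t,B)-1$ (your $\Phi_m(t)-1$), use $n$-periodicity together with the superadditivity $\sum_j(e^{a_j}-1)\le e^{\sum_j a_j}-1$ to fold the $t$-integral into $n(e^{\bm{\L}(B_{\infty,n})_m}-1)$ via stochastic domination by a $\Pois(\mathfrak{H}_\model^{\intercal})$ Galton--Watson tree, and then uncondition using \Csubg\ and the bound $\sum_k a_k^2\le(\sum_k a_k)^2$. The only cosmetic difference is that the paper keeps $\xi^p_k$ (rather than $|\xi^p_k|$) in the exponent until the very moment \Csubg\ is invoked, whereas you bound it earlier; since $D^p_k\ge 0$ this is immaterial.
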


\begin{proof}[Proof of Lemma \ref{lemC.1}]
Conditioning on the random drifts gives $\E{e^{\mathcal{Z}}}  = \E{\E{e^{\mathcal{Z}}\cond \xi}}  = \E{e^{\Mass_C(\xi)} \E{e^{\Mass_B(N)}\cond \xi}}$.
Let us investigate the term $\E{e^{\Mass_B(N)}\cond \xi}$. We will use the cluster representation given in \eqref{cluster rep f}. Let $G^m_t$ a $\Pois( \bm{\mathfrak{f}}^{\intercal})$ born at time $t\in\R$ with root of type $m\in\M$. From classical calculations we have
\begin{equation*}
	\E{e^{\Mass_B(N)}\cond \xi} = \exp\Big[ \sum_{m\in\M} \itg_{\R} (\psi^m(t,B)-1)\I^m_t dt \Big] \quad \text{where} \quad \psi^m(t,B) = \E{e^{\Mass_B(G^m_t)}}.
\end{equation*}
We can now use the expression of the immigrant rates $\I^m$,
\begin{equation}\label{exp cond}
	\E{e^{\Mass_B(N)}\cond \xi} = \exp\Big[ \sum_{m\in\M} \mathscr{i} \itg_{\R} (\psi^m(t,B)-1) dt + \sum_{p\in\Pg} \itg_{\R} \xi^p_k \sum_{m\in\M}\itg_{\R} A^m_{p,k}(t) (\psi^m(t,B)-1) dt d\mp{p}{k}\Big].
\end{equation}
Denote $\Psi^m = \int_{\R}(\psi^m(t,B)-1) dt$ and $\Psi = (\Psi^m)_{m\in\M}$. By \eqref{exp cond} we have
\begin{equation*}
	\E{e^{\mathcal{Z}}} = \exp(\mathscr{i} \vert \Psi\vert_1) \mathds{E}\Big[\exp\big( \Mass_D(\xi)\big)\Big] \quad \text{with} \quad D^p_k = C^p_k + \sum_{m\in\M}\itg_{\R} A^m_{p,k}(t) (\psi^m(t,B)-1) dt.
\end{equation*}
Thus from assumption \Csubg$[\xi]$ we have $\E{e^{\mathcal{Z}}} \leq \exp\Big[ \mathscr{i} \vert \Psi\vert_1 + \displaystyle\mathfrak{e}\sum_{p\in\Pg}  \itg_{\R} D^p_k d\mp{p}{k} + \mathfrak{s}\sum_{p\in\Pg}  \itg_{\R} (D^p_k)^2 d\mp{p}{k}\Big]$.
Let us focus on $\itg_{\R} D^p_k d\mp{p}{k}$. By Fubini Tonelli, we have for $p\in\Pg$, $\itg_{\R} D^p_k d\mp{p}{k} \leq (C_{\infty})_p + \sum_{m\in \M} (A_{\infty})_{p,m} \Psi^m$.
To continue the proof we have to bound $\Psi^m$ for $m\in\M$. Recall that $n$ is a positive integer. We use the same ideas as in Theorem 3.10 of \cite{leblanc_sharp_2025} except the we have to deal with $1$-periodicity instead of complete stationarity.

For $t\in\R$, denote by $\floor{t}_n$ and $\{t\}_n$ the unique real numbers such that $t =  \floor{t}_n + \{t\}_n$ with $\floor{t}_n \in n\Z$, $0\leq \{t\}_n <n$. By 1-periodicity of clusters, if $\tau_x(s) = x + s$, we have $\psi^m(t,B) = \psi^m(\{t\}_n,B\circ \tau_{\floor{t}_n})$. Remark that by definition, $\sum_{j\in\Z} B\circ \tau_{nj} \preceq B_{\infty,n} \ind_{\R}$. It follows that
\allowdisplaybreaks
\begin{align*}
	\Psi^m & = \itg_0^n \sum_{j\in\Z} (\psi^m(t+nj,B)-1) dt  = \itg_0^n \sum_{j\in\Z} (\psi^m(t,B\circ \tau_{nj})-1) dt = \itg_0^n \mathds{E}\bigg[\sum_{j\in\Z} (e^{\Mass_{B\circ \tau_{nj}}(G^m_t)}-1)\bigg] dt\\
	& \leq \itg_0^n \mathds{E}\bigg[-1+\prod_{j\in\Z} e^{\Mass_{B\circ \tau_{nj}}(G^m_t)}\bigg] dt  = \itg_0^n \E{-1+ \exp\Big(\Mass_{\sum_{j\in\Z}B\circ \tau_{nj}}(G^m_t)\Big)} dt \\
	& \leq \itg_0^n \E{-1+ e^{\Mass_{B_{\infty,n}\ind_{\R}}(G^m_t)}} dt  \leq \itg_0^n \big[ e^{\bm{\L}(B_{\infty,n})}-\ones\big]_m dt = n \big[ e^{\bm{\L}(B_{\infty,n})}-\bm{1}\big]_m.
\end{align*}
The last inequality comes from the fact that $\{(u,m') \mid (u,m',s)\in G^m_t\}$ (ie $G^m_t$ without temporal embedding) is stochastically dominated by a $\Pois(\mathfrak{F}^{\intercal})$ Galton Watson process, and thus, with the notations of Theorem 3.1 of \cite{leblanc_sharp_2025}, $\Mass_{B_{\infty,n} \ind_{\R}}(G^m_t) \leq B_{\infty,n} \cdot \card_{\M}(\mathcal{T}^m)$ for a well coupled $\Pois(\mathfrak{F}^{\intercal})$ Galton Watson process $\mathcal{T}^m$ with root of type $m$. We conclude that
\begin{equation*}
	\itg_{\R} D^p_k d\mp{p}{k} \leq (C_{\infty})_p +  \big[A_{\infty}  ( n e^{\bm{\L}(B_{\infty,n})}-n \ones)\big]_p.
\end{equation*}
Finally, since $\itg_{\R} (D^p_k)^2 d\mp{p}{k} \leq \Big(\itg_{\R} D^p_k d\mp{p}{k}\Big)^2$ we can conclude that
\begin{equation*}
	\E{e^{\mathcal{Z}}}  = \exp\Big[  \mathscr{i} \vert \Psi\vert_1 + \displaystyle\mathfrak{e}\sum_{p\in\Pg}  \itg_{\R} D^p_k d\mp{p}{k} + \mathfrak{s}\sum_{p\in\Pg}  \itg_{\R} (D^p_k)^2 d\mp{p}{k}\Big] \leq \exp\Big[ \mathscr{i} \vert n e^{\bm{\L}(B_{\infty,n})}-n \ones\vert_1 + \mathfrak{e} \vert\bm{\mathcal{X}}\vert_1 + \mathfrak{s}\vert\bm{\mathcal{X}}\vert^2_2 \Big],
\end{equation*}
where we denoted $\bm{\mathcal{X}} := C_{\infty} +  A_{\infty}  ( n e^{\bm{\L}(B_{\infty,n})}-n \ones)$.
\end{proof}

\section{Extended filtration and martingale}\label{appendixD}

Recall that $\pi^m, \ m\in\M$ are independent Poisson random measures, that random drifts $\xi=(\xi^p_k)_{(p,k)\in\Q}$ are independent random variables also independent of $\pi^m, \ m\in\M$. In this appendix we prove that $(d\pi^m - dtdx)_{m\in\M}$ is a $(\F_t)_t$ martingale even if $\F_t$, defined in \eqref{filtration}, is bigger than the natural history of $(\pi^m)_{m\in\M}$ since it also contains information about the random drifts $\xi$.

Let $(\Omega_1, \mathcal{A}_1,P_1)$ a probability space, $(E,\mathcal{E})$ a measurable space ($E=\R_+$ in HAR framework), $\mu$ a measure on $E$ (Lebesgue measure in HAR framework) and $\pi$ a Poisson random measure on $\R\times E$ with intensity $dt\times d\mu(x)$ and the natural filtration
\[\F^1_t = \sigma\Big( \pi\big\vert_{(-\infty,t]\times E}\Big).\] 
Let $(\Omega_2, \mathcal{A}_2,P_2)$ another probability space and $(Z_t)_{t\in\R}$ a process (the analogue of random drifts $\xi$ in HAR framework) and denote by $\F^2$ the natural history of $Z$. Consider $(\Omega,\F,\mathds{P})$ the probability space defined by 
\[\Omega = \Omega_1 \times \Omega_2, \quad \F = \mathcal{A}_1 \otimes \mathcal{A}_2, \quad \text{and} \quad \mathds{P} = P_1\otimes P_2.\]
Then $\pi$ can be viewed as a random variable on $\Omega$ by $\pi(\omega_1,\omega_2) = \pi(\omega_1)$ for any $(\omega_1,\omega_2)\in\Omega$, and similarly for $Z$. In $\Omega$ it is clear that $\pi$ and $Z$ are independent. Consider the filtration 
\[\F_t = \sigma\Big( (Z_s)_{s\leq t}, \ \pi\big\vert_{(-\infty,t]\times E} \Big).\] It is clear that we have $\F_t = \F^1_t \otimes \F^2_t$ and $(\F_t)_t$ is the analogue of \eqref{filtration} in the context of HAR processes. 

Denote by $\Pi$ the $(\F_t)_t$-predictable $\sigma$-algebra for processes $X:\R\times E\times \Omega \rightarrow \R$, i.e. the $\sigma$-algebra generated by sets of the form $(s,t]\times B\times A$ with $s<t$, $B\in\mathcal{E}$ and $A\in\F_s$. Similarly, let $\Pi^1$ the analogue for $(\F^1_t)_t$.

Let $X : (t,x,\omega_1,\omega_2) \in\R \times E \times \Omega \longmapsto X_t(x,\omega_1,\omega_2) \in\R$ be a $(\F_t)_t$-predictable process, which means that $X$ is $\Pi$ measurable. We want to show that for any $\omega_2\in\Omega_2$, the process $X^{\omega_2} : (t,x,\omega_1) \in\R \times E \times \Omega_1 \longmapsto X_t(x,\omega_1,\omega_2) \in\R$ is $\F^1_t$- predictable, i.e. $\Pi^1$-measurable.

Denote by $\mathcal{G}$ the class of processes for which the above property holds. Clearly for any $s<s'$, $B\in \mathcal{E}$, $A_1\in\F^1_s$ and $A_2\in\F^2_s$, the process $\ind_{(s,s']\times B\times A_1 \times A_2}$ is clearly in $\mathcal{G}$. Since $\mathcal{G}$ is stable by linear combinations and by limits, and because $\F_t = \F^1_t \otimes \F^2_t$, it is clear that all the processes of the form $\ind_{(s,s']\times B\times A}$, with $s<s'$, $B\in\mathcal{E}$, $A\in\F_s$ are in $\mathcal{G}$. Thus for any $D\in \Pi$, we have $\ind_D \in \mathcal{G}$. Then it is clear that $\mathcal{G}$ is exactly the class of $\F_t$-predictable processes. We proved that for all $\omega_2\in\Omega_2$, the process $X^{\omega_2}$ is $\F^1_t$-predictable.

Thus, if $X$ is a non-negative $(\F_t)_t$-predictable process, by Fubini Theorem and since $d\pi(t,x)-dtd\mu(x)$ is a $(\F^1_t)_t$ martingale, we have
\begin{align*}
	\E{\itg_{\R^2} X(t,x) d\pi(t,x)} & = \itg_{\Omega} \itg_{\R^2} X(t,x,\omega) d\pi(t,x) d\mathds{P}(\omega) \\
	& =  \itg_{\Omega_2}\bigg[\itg_{\Omega_1} \itg_{\R^2} X^{\omega_2}(t,x,\omega_1) d\pi(t,x) dP_1(\omega_1) \bigg] dP_2(\omega_2) \\
	& = \itg_{\Omega_2}\bigg[\itg_{\Omega_1} \itg_{\R^2} X^{\omega_2}(t,x,\omega_1) dtd\mu(x) \ dP_1(\omega_1) \bigg] dP_2(\omega_2) \\
	& = \E{\itg_{\R^2} X(t,x) dtd\mu(x)}.
\end{align*}
Thus by taking $E=\R_+$, $\mu$ the Lebesgue measure, $Z=\xi$ and $X(t,x) = \ind_{0\leq x \leq \lambda_t}$ with $\lambda$, defined in \eqref{eq2.2a}, $(\F_t)_t$-predictable, it is clear that the $(\F_t)_t$-compensator of 
\[t\longmapsto dN_t = \pi\big(\{t\}\times [0,\lambda_t]\big) \quad \text{is} \quad \lambda_t dt.\]
\end{appendix}


\begin{acks}[Acknowledgments]
I would like to thank my PhD advisors, Vincent Rivoirard and Patricia Reynaud-Bouret, for their advice and helpful comments which greatly improved this article.
\end{acks}

%


\bibliographystyle{imsart-number} 
\bibliography{biblio.bib}    


\end{document}